\documentclass[10pt,a4paper,twoside,reqno]{amsart}

\usepackage{amssymb}
\usepackage{mathtools}
\usepackage[shortlabels]{enumitem}
\usepackage[utf8]{inputenc}
\usepackage{color}
\usepackage{xspace}
\usepackage{microtype,mathabx}
\usepackage{fouridx}
\usepackage{tikz}
\usetikzlibrary{positioning,decorations.pathreplacing,decorations.markings,arrows.meta,calc,fit,quotes,cd,math,arrows,backgrounds,shapes.geometric}
\usepackage[a4paper,top=3cm,bottom=3cm,inner=2.5cm,outer=2.5cm]{geometry}
\usepackage{graphicx}
\definecolor{darkgreen}{rgb}{0,0.45,0}
\usepackage{bm}
\usepackage[colorlinks,citecolor=darkgreen,urlcolor=gray,final,hyperindex,linktoc=page,hyperfootnotes=true]{hyperref}
\usepackage[arrow, matrix, tips, curve, graph, rotate]{xy}
\SelectTips{cm}{10}

\usepackage{babel}
\usepackage{soul}

\setlist[enumerate]{label=(\roman*),itemsep=0ex}

\makeatletter

\renewcommand{\section}{\@startsection
{section}
{0}
{0mm}
{-\baselineskip}
{1\baselineskip}
{\centering \Large \bfseries}}

\renewcommand{\subsection}{\@startsection
{subsection}
{1}
{0mm}
{-\baselineskip}
{2mm}
{\large \bfseries}}

\makeatother
 
\usepackage[capitalize]{cleveref}
\crefname{equation}{}{}
\crefname{lem}{Lemma}{Lemmas}
\crefname{thm}{Theorem}{Theorems}
\crefname{defn}{Definition}{Definitions}
\crefname{conj}{Conjecture}{Conjectures}
\crefname{ex}{Example}{Examples}
\crefname{sec}{Section}{Sections}
\crefname{prop}{Proposition}{Propositions}

\theoremstyle{plain}
\newtheorem{thm}{Theorem}[section]
\newtheorem*{thm*}{Theorem}
\newtheorem{cor}[thm]{Corollary}
\newtheorem{lem}[thm]{Lemma}
\newtheorem{prop}[thm]{Proposition}

\theoremstyle{remark}

\theoremstyle{definition}
\newtheorem{defn}[thm]{Definition}
\newtheorem{ex}[thm]{Example}

\definecolor{mypurple}{rgb}{0.5, 0.0, 0.5}

\newcommand{\myemph}{\emph}
\newcommand{\ie}{i.e.\xspace}

\newcommand{\eg}{e.g.\xspace}
\newcommand{\viz}{viz.\xspace}

\newcommand{\defeq}{=_\mathrm{def}}
\newcommand{\co}{\colon}
\newcommand{\op}{\mathrm{op}}

\newcommand{\id}{\mathrm{id}}
\newcommand{\Id}{\mathrm{Id}}

\newcommand{\one}{\mathrm{I}}

\newcommand{\comp}{}
\newcommand{\Nat}{\mathbb{N}}

\newcommand{\copr}{+}
\newcommand{\coexp}{\rhd}

\newcommand\univu{\kappa}

\newcommand{\cat}{}
\newcommand{\catA}{\cat{A}}
\newcommand{\catB}{\cat{B}}
\newcommand{\catC}{\cat{C}}

\newcommand{\moncat}{}
\newcommand{\moncatM}{\moncat{M}}

\newcommand{\coc}{\mathit}
\newcommand{\cocC}{\coc{C}}
\newcommand{\cocD}{\coc{D}}
\newcommand{\cocE}{\coc{E}}

\newcommand{\ccat}{\mathcal}
\newcommand{\ccatK}{\ccat{K}}
\newcommand{\ccatL}{\ccat{L}}

\newcommand{\rig}{}
\newcommand{\rigR}{\rig{R}}
\newcommand{\rigS}{\rig{S}}
\newcommand{\rigT}{\rig{T}}

\newcommand{\Bim}{\mathsf{Bim}}

\newcommand{\psh}[1]{\mathsf{P}{#1}}

\newcommand{\emb}{\mathsf{y}}

\newcommand{\yon}{\emb}

\newcommand{\spsh}{\mathsf{P}}
\newcommand{\spshA}{\spsh(\catA)}

\newcommand{\spshM}{\spsh(\moncatM)}

\newcommand{\freesmc}{\mathsf{S}}

\newcommand{\Sym}{\mathsf{S}}

\newcommand\dual{^\vee}
\newcommand\sfU{\mathsf{U}}
\newcommand\sfUin{\mathsf{V}}

\newcommand\PSh[1]{\psh\!\left(#1\right)}

\DeclareMathOperator*{\colim}{colim}

\newcommand{\Env}{\mathsf{Env}}

\newcommand{\End}{\mathsf{End}}

\newcommand{\nc}{\mathsf}
\newcommand{\Set}{\nc{Set}}
\newcommand{\SET}{\nc{SET}}

\newcommand{\Cat}{\nc{Cat}}

\newcommand{\CAT}{\nc{CAT}}

\newcommand{\COC}{\nc{COC}}

\newcommand\Pres{\mathsf{Pres}}

\newcommand{\PsSMon}{\mathsf{SMon}}

\newcommand{\SMONCAT}{\nc{SMCAT}}
\newcommand{\SMCAT}{\SMONCAT}

\newcommand{\RIG}{\nc{RIG}}
\newcommand\Rig{\nc{Rig}}
\newcommand\FRIG{\RIG^\mathsf{Free}}
\newcommand\FRig{\Rig^{\nc{Free}}}

\newcommand\CRIG{\RIG^\mathsf{Conv}}
\newcommand\CRig{\Rig^\mathsf{Conv}}

\newcommand{\OpdRIG}{\RIG^{\nc{Opd}}}
\newcommand\OpdRig{\Rig^{\nc{Opd}}}

\newcommand{\OPD}{\nc{OPD}}

\newcommand\map[2]{\left[#1,#2\right]}

\newcommand\CATexp[2]{\hom\CAT{#1}{#2}}
\renewcommand\hom[3]{#1\!\left(#2,#3\right)}
\newcommand\inthom[3]{\underline{#1}\!\left(#2,#3\right)}
\newcommand\Hom[3]{#1\big(#2,#3\big)}

\tikzset{tick/.style={postaction={decorate,decoration={markings,mark=at
position 0.5 with {\draw[-] (0,.4ex) -- (0,-.4ex);}}}}}
\tikzset{bigtick/.style={postaction={decorate,decoration={markings,mark=at
position 0.5 with {\draw[-] (0,.6ex) -- (0,-.6ex);}}}}}

\newcommand{\sbul}{\scriptstyle\bullet}
\tikzset{bul/.style={postaction={decoration={markings,mark=at position 0.5 with
{\node{$\sbul$};}},decorate}}}

\tikzset{Rightarrow/.style={double equal sign distance,>={Implies},->},
triple/.style={-,preaction={draw,Rightarrow}}}

\newcommand{\hide}[1]{}

\newcommand{\stmap}{\ensuremath{\mathrm{st}}}
\renewcommand{\mathrlap}[1]{\enspace{#1}}
\newcommand{\tensor}{\cdot}
\newcommand{\initialcat}{\mathsf{0}}
\newcommand{\terminalcat}{\mathsf{1}}
\newcommand{\zeroobj}{\mathsf{O}}
\newcommand{\fullsubcat}{\hookrightarrow}
\newcommand{\embto}{\hookrightarrow}
\newcommand{\mylift}[1]{\overline{#1}}
\DeclareMathOperator*{\bilim}{bilim}
\DeclareMathOperator*{\bicolim}{bicolim}

\begin{document}

\title{Symmetric 2-Rigs: Coexponentiability and Cartesian Closure}

\author[M.\,Anel]{Mathieu Anel} 
\address{Laboratoire J.-A. Dieudonn\'e, Universit\'e C\^ote d'Azur, Nice, France}
\email{mathieu.anel@protonmail.com}

\author[M.\,Fiore]{Marcelo Fiore}
\address{Department of Computer Science and Technology, University of Cambridge, United Kingdom}
\email{marcelo.fiore@cl.cam.ac.uk}

\author[N.\,Gambino]{Nicola Gambino}
\address{Department of Mathematics, The University of Manchester}
\email{nicola.gambino@manchester.ac.uk}

\keywords{Symmetric 2-rigs, exponentiability, cartesian closed structure.}

\subjclass[2020]{18N10, 18M60, 18M05, 18D60}

\begin{abstract}
We study coexponentiability in the context of the cocartesian 2-category 
$\RIG$ of symmetric \mbox{2-rigs}, symmetric strong monoidal cocontinuous
functors, and symmetric monoidal natural transformations.
Our results characterize the coexponentiable symmetric 2-rigs as those that
are deformation retracts of presheaf categories over small categories.  
As an application, we give an account of the cartesian closure of two full
sub-2-categories of the dual of $\RIG$ arising from the theory of
combinatorial species and the theory of symmetric operads.  
\end{abstract}

\date{\today}

\maketitle

\setcounter{tocdepth}{1}
\tableofcontents

\section{Introduction}

The idea of categorification~\cite{BaezJ:cat,MazorchukV:lecac} involves
replacing standard set-based mathematical structures (such as monoids) with
corresponding category-based structures (such as monoidal categories). Over the years, this has led to
significant advances in various mathematical contexts, such as algebra,
algebraic topology, and mathematical physics.  

This work considers symmetric 2-rigs, a categorification of the notion of a
commutative rig (a ring without negatives).  Specifically, a symmetric 2-rig
is a cocomplete category equipped with a symmetric monoidal structure in which
the tensor product is separately cocontinuous (\ie~cocontinuous in each
argument).  Here, the colimits play the role of the additive structure, the
symmetric monoidal structure that of the commutative multiplicative structure,
and the preservation of colimits that of the distributive law between them.
Symmetric 2-rigs have been studied widely in recent years, as they appear
naturally in many areas of mathematics including algebraic geometry (via
categories of coherent sheaves~\cite{BrandenburgM:refdca,ChirvasituA:funpga}),
category theory~\cite{BaezJ:higda,Baez:schfcp,LoregianF:dif2r}, and structural
combinatorics (via species of structures~\cite{JoyalA:thecsf,JoyalA:fonaes}). 

Symmetric 2-rigs, symmetric strong monoidal cocontinuous functors, and
symmetric monoidal natural transformations assemble into a 2-category $\RIG$
that shares many similarities with the category of commutative rigs.  In
particular, and of key importance to us here, it is (bicategorically)
cocartesian: the initial symmetric 2-rig is the cartesian category $\Set$ of
small sets and functions, and the coproduct of two symmetric
2-rigs $R$ and $S$ is given by the tensor product 
$R\otimes S$ of their underlying cocomplete categories
 (defined as the classifier $ R
\times  S \to R\otimes  S$ of
separately-cocontinuous functors) equipped with the natural symmetric monoidal
structure (\cref{prop:RIGcocartesian}).

Our main result characterizes the symmetric 2-rigs $R$ that are
coexponentiable, \ie~such that the coproduct pseudofunctor 
$R\otimes(-) \co \RIG \to \RIG$ admits a left biadjoint: we prove that a
symmetric 2-rig  is coexponentiable if and only if its underlying cocomplete
category is (bicategorically) dualizable in the \mbox{2-category} $\COC$ of
cocomplete categories, cocontinuous functors, and natural
transformations~(\cref{thm:coexponentiability-in-crig}).  
This, in combination with the characterization of dualizable cocomplete
categories as the deformation retracts of presheaf categories over small
categories~(\cref{thm:dualizable-coc}), provides a simple criterion for
establishing coexponentiability of symmetric 2-rigs as follows.

\begin{thm*} A symmetric 2-rig is coexponentiable 
if and only if its underlying cocomplete category is a deformation retract of 
a presheaf category.
\end{thm*}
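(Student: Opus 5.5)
The theorem combines two results. The first is \cref{thm:coexponentiability-in-crig}: a symmetric 2-rig $R$ is coexponentiable in $\RIG$ if and only if its underlying cocomplete category is dualizable in $\COC$. The second is \cref{thm:dualizable-coc}: a cocomplete category is dualizable in $\COC$ if and only if it is a deformation retract of a presheaf category $\psh{\catA}$ with $\catA$ small. The plan is to prove each separately and then compose them.

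For dualizability in $\COC$, I would use that the unit is $\Set$ and that the tensor satisfies $\psh{\catA}\otimes\psh{\catB}\simeq\psh{(\catA\times\catB)}$. A presheaf category $\psh{\catA}$ is dualizable with dual $\psh{(\catA^\op)}$, where the evaluation and coevaluation are given by the hom-profunctor. A deformation retract of a dualizable object is again dualizable, since it is a retract in a bicategory with enough coherence. Concretely, given $i\co C\to\psh{\catA}$ and $r\co\psh{\catA}\to C$ with $ri\cong\id$, one uses $i$ and $r$ to transport the dual.

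Conversely, if $C$ is dualizable with dual $C\dual$, then every cocontinuous $C\to D$ corresponds to an object of $C\dual\otimes D$. Consequently $\id_C$ is a small colimit of ``finite-rank'' pieces, which yields a small dense family $\catA\subseteq C$. The nerve $C\to\psh{\catA}$ then has a cocontinuous left inverse given by the realization, and since the nerve is cocontinuous in this situation, $C$ is a retract of $\psh{\catA}$. I expect this converse direction to be the delicate part: one must verify smallness and check that the nerve preserves colimits. For that I would use the trace/coevaluation $\Set\to C\dual\otimes C$ to exhibit $\id_C$ as a small weighted colimit of functors of the form $\hom{C}{c}{-}$-like pieces.

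For coexponentiability, the coproduct in $\RIG$ is $R\otimes S$ (\cref{prop:RIGcocartesian}), so we need a left biadjoint to $R\otimes(-)$. Given $S$, the candidate is the free symmetric 2-rig on the cocomplete category $R\dual\otimes S$, modulo imposing the monoidality constraints. More precisely, a strong monoidal cocontinuous $S\to R\otimes T$ corresponds, via duality, to a cocontinuous $R\dual\otimes S\to T$ satisfying compatibility conditions. I would construct $R\coexp S$ as a bicategorical coinverter/coequifier (a 2-colimit in $\RIG$) of the free 2-rig $\Sym(R\dual\otimes S)$, forcing these conditions. Since $\RIG$ has such colimits, being monadic over $\COC$, this gives the left biadjoint.

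For the other direction, I would apply the left biadjoint to free 2-rigs. The free 2-rig on $\Set$ is species $\psh{\mathbb{B}}$, and $\RIG(\psh{\mathbb{B}},R\otimes T)\simeq R\otimes T$ as categories. Coexponentiability then makes $R\otimes(-)\co\COC\to\COC$, restricted along the forgetful functor, representable in a way that makes $R\otimes-$ preserve limits. This implies dualizability: take $D=$ the coexponential applied to $\psh{\mathbb{B}}$ and use a retract argument, since $\Set$ is a retract of $\psh{\mathbb{B}}$ in $\COC$. This last extraction is the main obstacle, and I would attack it by identifying the image of the unit with the evaluation pairing.
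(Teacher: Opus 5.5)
Your decomposition into the two theorems matches the paper, but one step of your argument fails and one direction is essentially missing.

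\textbf{The nerve step fails.} In ``dualizable $\Rightarrow$ deformation retract of $\psh{\catA}$'', you claim the nerve $C\to\psh{\catA}$ is cocontinuous. This is false in general. The nerve is cocontinuous exactly when each $a\in\catA$ is tiny (i.e.\ $\hom{C}{a}{-}$ preserves small colimits). For dense $\catA$ this would force $C\simeq\psh{\catA}$, so every dualizable cocomplete category would be a presheaf category. That is not so. Take $\mathbb{Q}$ as a poset and the full subcategory of $\psh{\mathbb{Q}}$ on presheaves with $F(p)\cong\colim_{q>p}F(q)$. It splits the idempotent profunctor $[p<q]$, so it is a deformation retract of $\psh{\mathbb{Q}}$, yet it has no tiny objects at all.

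The section must be built from the \emph{coefficients} of a decomposition of $\Id_C$, not from homs. Since $\spsh(C\dual\times C)\to C\dual\otimes C\simeq\inthom\COC CC$ is essentially surjective, $\Id_C\cong\colim_{j\in J}\gamma_j\otimes c_j$ with $J$ small and each $\gamma_j\in C\dual$ a \emph{cocontinuous} functor $C\to\Set$. Then $S(c)=\colim_j\gamma_j(c)\times\hom{J}{-}{j}$ is cocontinuous precisely because the $\gamma_j$ are. With $R$ the cocontinuous extension of $c_\bullet$, the density formula gives $RS\cong\Id_C$.

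\textbf{The necessity direction is not argued.} For ``coexponentiable $\Rightarrow$ dualizable'' you stop exactly where the work is. Two ingredients are needed.

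First, you must show $\mylift\sfU R\otimes(-)$ preserves ($\lambda$-small) bilimits on all of $\COC$, not just after composing with $\mylift\sfU$. For this, use that for \emph{every} $C$ the unit $C\to\mylift\sfU\mylift\Sym C$ is a pseudonatural pseudo-section: project onto the degree-one summand, using that sums are products in $\COC$. The comparison map for $\bilim_i C_i$ is then a retract of the one for $\bilim_i\mylift\sfU\mylift\Sym C_i$. The latter is an equivalence because $R+(-)$ is a right biadjoint and $\mylift\sfU$ preserves bilimits. Your retract of $\Set$ into $\psh{\mathbb{B}}$ covers only the case $C=\Set$.

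Second, bilimit preservation yields nuclearity. Write $\mylift\sfU R\simeq\bicolim_i\psh{A_i}$ with $A_i$ small; then
\[
\inthom\COC{\mylift\sfU R}\Set\otimes\mylift\sfU R
\simeq\bilim_i\big(\inthom\COC{\psh{A_i}}\Set\otimes\mylift\sfU R\big)
\simeq\bilim_i\inthom\COC{\psh{A_i}}{\mylift\sfU R}
\simeq\inthom\COC{\mylift\sfU R}{\mylift\sfU R},
\]
using dualizability of each $\psh{A_i}$. Nuclear then gives dualizable. Your idea of extracting a dual from $R\coexp\psh{\mathbb{B}}$ has no evident way to isolate a ``degree-one part'' unless you already know that 2-rig is a symmetric-algebra one.

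\textbf{Sufficiency is a workable alternative.} Your sufficiency argument can be made to work, with one correction: the monoidal constraints are structure, so you need iso-inserting bicolimits, not only coinverters and coequifiers. The paper instead computes $R\coexp\mylift\Sym C\simeq\mylift\Sym\big((\mylift\sfU R)\dual\otimes C\big)$ directly. It then extends to arbitrary $S$ along a presentation of $S$ as a bicolimit of symmetric-algebra 2-rigs, which avoids handling coherence by hand.
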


We apply our results to obtain new ones on free 2-rigs (on small
categories) and on more general operadic 2-rigs~\cite{AnelM:ope2r}.  In
particular, we show that free 2-rigs are a coexponential ideal of operadic
2-rigs~(\cref{prop:MixedCoexpFormula}).
Finally, we also use our results to provide a uniform account of the
(bicategorical) cartesian closed structure of two sub-2-categories of the dual
of $\RIG$ (\cref{cor:CC-FRIG} and \cref{prop:CC-OpRIG}) equivalent to those
established in~\cite{FioreM:carcbg} and~\cite{GambinoN:opebaf}.

From the perspective of commutative algebra, our characterization of
coexponentiable symmetric 2-rigs may be seen as a 2-dimensional version of a
result for commutative rigs established by Niefield and
Wood~\cite{NiefieldS:coepr}.  However, our proof involves a novel adaptation
and generalization of the method therein based on the pseudomonadicity of the
underlying 2-functor $\mylift\sfU\co\RIG\to\COC$, with left biadjoint
$\mylift\Sym \co \COC \to \RIG$ sending a cocomplete category $C$ to what we
call its symmetric algebra 2-rig $\mylift\Sym\,C$ in analogy with the
situation in commutative algebra~\cite{EisenbudCommAlg}  and topos 
theory~\cite{BungeM:symt}~(\cref{thm:sym-alg-coc}).  
In preparation for the proof of our main result, we also prove some results of
independent interest, including the pseudomonadicity of the 2-category $\RIG$
over the 2-category $\CAT$ of categories, functors, and natural
transformations~(\cref{prop:pseudomonadicitysquare}), thus providing
additional evidence for the analogy between commutative rigs and symmetric
2-rigs.

\subsubsection*{Outline of the paper} 
\cref{sec:background} reviews the bicategorical background needed in the
paper.  
Sections~\ref{sec:coc} and~\ref{sec:sym2rig}  establish our
results concerning $\COC$ and $\RIG$, respectively.
\cref{sec:carcbs} concludes discussing cartesian closed structure within
$\RIG^\op$. 

\subsubsection*{Acknowledgements}
We would like to thank Andr\'e Joyal, who conjectured the coexponentiability
cha\-ra\-cte\-ri\-za\-tion. 
Mathieu Anel acknowledges that the research leading to these results has
received funding from the European Research Council (ERC) under the European
Union's Ninth Framework Programme Horizon Europe (ERC Synergy Project Malinca,
Grant Agreement n.~101167526). 
Marcelo Fiore acknowledges that this material is based upon work supported by
EPSRC via grant EP/V002309/1.  
Nicola Gambino acknowledges that this material is based upon work supported by
the US Air Force Office for Scientific Research under award number
FA9550-21-1-0007, by EPSRC via grant EP/V002325/2, and ARIA via grant
MSAI-PR01-P12. 

\section{Preliminaries}
\label{sec:background}

\subsection*{Categories} 
\label{subsec:SizeConventions} 

We will mainly consider categories, notably without assuming the locally
smallness condition, and also those that are complete with respect to small
colimits.  
So as to carefully deal with size issues, throughout the paper we assume two
regular cardinals $\lambda>\kappa>\aleph_0$.
We will therefore deal with sets of four possible increasing sizes: 
finite sets (\ie~of cardinality less than $\aleph_0$); 
small sets (\ie~of cardinality less than $\kappa$); 
sets~(\ie~of cardinality less than $\lambda$); and 
large sets (\ie~of any cardinality). 

A (finite, small, large) category $K$ has a (finite, small, large) set of
isomorphism classes of objects and a (finite, small, large) hom-set of maps
between them, written $\hom KXY$, for every pair of objects $X, Y \in K$. The
composition of maps $f \co X \to Y$ and $g \co Y \to Z$ is written 
$g f \co X \to Z$, while the identity map on an object $X \in K$ is written
$\id_X \co X \to X$.  

We write $\SET$ for the large category of sets and functions, and  $\Set$ for
its full subcategory spanned by small sets.  Analogously, we write $\CAT$ for
the large category of categories and functors, and $\Cat$ for its full
sub-category spanned by small categories.  Note that $\Set$ (resp.~$\Cat$) is
dense in $\SET$ (resp.~$\CAT$).  Note also that $\Set$ is a category, and
therefore  $\Set \in \CAT$, but it is not a small category.  The situation is
summarised in \cref{tab:size}.
\begin{table}[htb]
\begin{tabular}{|c|c|c|c|} \hline
 & isomorphism-classes of objects & hom-sets of maps & Examples 
\\[.5mm] \hline
small category & 
small set  & 
small set & $\initialcat$, $\terminalcat$  
\\[.5mm]
locally small category  & 
set &  
small set &  $\Set$, $\Cat$ 
\\[.5mm]
category  & 
set &  
set & $\Set^\Cat$
\\[.5mm]
large category & 
large set &  
large set &  $\SET$, $\CAT$, $\SET^\Cat$ 
\\[1mm] \hline
\end{tabular}
\smallskip
\caption{Size of categories}
\label{tab:size}
\end{table}
The large category $\CAT$ has cartesian closed structure denoted
$\big({(-)\!\times\!(=)},\terminalcat,\CATexp-=\big)$ and cocartesian
structure denoted $\big({(-)+(=)},\initialcat)$.

\subsection*{Bicategories}

We shall assume familiarity with the fundamental notions and results of
2-dimensional category theory referrring to~\cite{LackS:a2cc,JohnsonN:twodc}
for background.  This paper is written entirely in terms of the theory of
bicategories (aka weak 2-categories) to ensure that all our results are
invariant under the appropriate notion of equivalence. While some of the
structures that we deal with are sometimes presented in a stricter way, we
will not keep track of this to avoid proliferation of terminology, trusting
readers to be able to recognise situations of this kind.  
Hence, we will deal with bicategories, pseudofunctors (aka~homomorphisms),
pseudonatural transformations, and modifications. Accordingly, we will have
notions of biequivalence, biadjunction, bilimit and bicolimit of
pseudofunctors, and pseudomonad.  
We write $\ccatK \simeq \ccatL$ to indicate that two bicategories $\ccatK$ and
$\ccatL$ are biequivalent. 
By default, our bicategories will be large, \ie~have a large set of
equivalence classes of objects and large hom-categories.  Abusing notation, we
also write $\CAT$ for the bicategory of categories, and $\Cat$ for its full
sub-bicategory spanned by small categories.   
Small categories may be used to present categories by means of filtered
bicolimits.  

\begin{lem} \label{lem:ColimitPresentationForCAT}
Every category is a bicolimit in $\CAT$ of a $\lambda$-small $\univu$-filtered
diagram of $\univu$-small categories in $\Cat$.
\end{lem}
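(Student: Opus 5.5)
Let $A$ be a category in the paper's sense: it has a set (of size less than $\lambda$) of isomorphism classes of objects, and hom-sets of size less than $\lambda$. The plan is the categorical analogue of writing a set as the $\univu$-filtered union of its $\univu$-small subsets. Replacing $A$ by an equivalent skeletal category changes nothing up to equivalence, so I assume $A$ has a set of objects and set-sized hom-sets. Its total collection of morphisms $M$ then has cardinality less than $\lambda$, because $\lambda$ is regular and we take a union of fewer than $\lambda$ sets, each of size less than $\lambda$.

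Let $I$ be the poset of subcategories $B \subseteq A$ whose set of morphisms has cardinality less than $\univu$, ordered by inclusion. Each such $B$ is a $\univu$-small category, so it lies in $\Cat$.

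\textbf{$I$ is $\univu$-filtered.} Take fewer than $\univu$ members of $I$. Their union is a set of fewer than $\univu$ morphisms, by regularity of $\univu$. Close it under identities and composition in countably many stages. Each stage stays of size less than $\univu$, since $\univu>\aleph_0$ and finite composites of fewer than $\univu$ morphisms number fewer than $\univu$. The countable union of the stages is still below $\univu$, again by regularity and $\univu>\aleph_0$. The result is an upper bound in $I$.

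\textbf{$I$ is $\lambda$-small.} Its cardinality is at most the number of subsets of $M$ of size less than $\univu$. This is $|M|^{<\univu}$, which needs to be less than $\lambda$. This is the one genuinely set-theoretic point and the main obstacle: it holds if $\lambda$ is inaccessible, or more generally if $\mu^{<\univu}<\lambda$ for all $\mu<\lambda$. This is presumably the intended reading of the size conventions (universes). If only regularity is assumed, I would instead use a smaller cofinal family: build $I$ by transfinite recursion, enumerating $M$ in order type less than $\lambda$ and taking the $\univu$-small subcategories generated by initial segments together with the closures above. I expect, however, that the authors take $\lambda$ sufficiently large or inaccessible.

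\textbf{$A$ is the bicolimit.} For a $\univu$-filtered diagram of full inclusions, or indeed of any inclusions of subcategories, the ordinary colimit in $\Cat$ or $\CAT$ is computed as a union. Here this union is $A$, since every morphism lies in a finite, hence $\univu$-small, subcategory. It remains to see that this strict colimit is also a bicolimit. Filtered colimits of categories are computed on objects and morphisms, so for a test category $X$ the comparison
\[
\CAT(A,X)\to \lim_{B\in I}\CAT(B,X)
\]
is an isomorphism. The strict limit here is a bilimit because the diagram is a poset of strict inclusions: pseudo-cocones over a filtered poset diagram can be strictified, since the coherent isomorphisms are uniquely determined and can be absorbed by transport along identities. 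Concretely, a pseudo-cocone with components $F_B\colon B\to X$ and isomorphisms $F_{B'}|_B\cong F_B$ glues to a functor $A\to X$, unique up to unique isomorphism, by choosing a representative value on each object compatibly. Hence the comparison $\CAT(A,X)\to$ (pseudo-cocones) is an equivalence, and $A$ is the required bicolimit.
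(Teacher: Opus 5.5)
The paper states this lemma without proof, so there is no argument to compare against. Your argument is the standard one: pass to a skeleton, take the poset $I$ of $\kappa$-small subcategories, check $\kappa$-filteredness by closing under composition in $\omega$ stages, and glue pseudo-cocones. It is correct \emph{provided} $|M|^{<\kappa}<\lambda$, for instance when $\lambda$ is strongly inaccessible. That is clearly the intended reading, because the paper's conventions already fail for merely regular cardinals. For $\Set$ to be locally small one needs $|Y|^{|X|}<\kappa$ for all small $X,Y$, i.e.\ $\kappa$ must be a strong limit.

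Two small slips to fix. First, the subcategory generated by a single endomorphism is countable rather than finite; it is still $\kappa$-small since $\kappa>\aleph_0$. Second, the isomorphisms of a pseudo-cocone are data, not ``uniquely determined''. What your concrete gluing actually uses is their cocycle condition, together with directedness of $I$ to find one stage containing $f$, $g$ and $gf$ when you check functoriality.

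The genuine problem is your fallback for merely regular $\lambda$, which should be deleted rather than developed. Initial segments of an enumeration of $M$ are not $\kappa$-small once $|M|>\kappa$. More importantly, no other construction can succeed, because under regularity alone the lemma is false.

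Suppose $A\simeq\bicolim_{i\in I}D_i$ with $I$ $\kappa$-filtered, each $D_i$ $\kappa$-small, and legs $q_i$.
\begin{enumerate}
\item A conical bicolimit in $\CAT$ is a localization of the Grothendieck construction, so every object of $A$ is isomorphic to some $q_i(d)$.
\item By $\kappa$-filteredness, any fewer than $\kappa$ objects of $A$ then lie, up to isomorphism, in the image of a single $q_k$.
\item If $A$ is discrete on a set of size $\mu$, the images of the $q_k$ therefore form a cofinal subfamily of the poset $[\mu]^{<\kappa}$ of subsets of size $<\kappa$. So $I$ needs at least $\mathrm{cf}([\mu]^{<\kappa})$ isomorphism classes of objects.
\end{enumerate}
Now take $\kappa=\aleph_1$, $\lambda=\aleph_{\omega+1}$, $\mu=\aleph_\omega$. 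Given countable sets $X_\alpha\subseteq\aleph_\omega$ for $\alpha<\aleph_\omega$, split the indices into pieces $P_n$ of size $\aleph_n$ and choose $y_n\in\aleph_{n+1}\setminus\bigcup_{\alpha\in P_n}X_\alpha$. Then $\{y_n\}_n$ lies in no $X_\alpha$, so $\mathrm{cf}([\aleph_\omega]^{\aleph_0})\geq\aleph_{\omega+1}=\lambda$. This discrete $A$ is a category in the paper's sense, yet it admits no $\lambda$-small presentation. So the strengthened size hypothesis is necessary, not a convenience.

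The sharp condition is $\mathrm{cf}([\mu]^{<\kappa})<\lambda$ for all $\mu<\lambda$. Any cofinal subfamily of $[M]^{<\kappa}$ is automatically $\kappa$-directed by regularity of $\kappa$. Closing its members under identities and composition gives a $\kappa$-directed cover of $A$ by $\kappa$-small subcategories, so your proof goes through with this family in place of all of $I$.
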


We recall some bicategorical notions and facts, see~\eg~\cite{StreetR:fibb}.
For a bicategory $\ccatK$ and objects $X, Y \in \ccatK$, we write 
$\hom\ccatK XY$ for the hom-category of maps and 2-cells from $X$ to~$Y$. For
maps $f \co X \to Y$ and $g \co Y \to Z$, we write $g \comp f \co X \to Z$ for
their horizontal composite.  
We say that a map $f \co X \to Y$ is an \emph{equivalence} if there is a
map $g \co Y \to X$ and isomorphisms $\eta \co \id_X \Rightarrow g \comp f$, 
$\varepsilon \co f \comp g \Rightarrow \id_Y$. When this happens, we say that
$X$ and $Y$ are \emph{equivalent} and write $X \simeq Y$. 
An \emph{adjunction} in $\ccatK$ consists of a pair of maps $f \co X \to Y$
and $g \co Y \to X$ together with 2-cells 
$\eta \co \id_X \Rightarrow g \comp f$, 
$\varepsilon \co f \comp g \Rightarrow \id_Y$ satisfying the usual triangular
laws.  
A pseudofunctor $G \co \ccatL \to \ccatK$ has a left biadjoint if and only if,
for each $X \in \ccatK$, there are an object $FX \in \ccatL$ and a map 
$\eta_X \co X \to GFX$ in $\ccatL$ universal in the sense of inducing an
equivalence
\[
\begin{tikzcd}[column sep = large]
\hom\ccatL{FX}Y 
\ar[r, "G_{FX,Y}"] & \hom\ccatK{GFX}{GY}
\ar[r, "\hom\ccatK{\eta_X}{GY}"] & \hom\ccatK X{GY} 
\end{tikzcd}
\]
for all $Y \in \ccatL$. 
Analogously to the 1-categorical setting, right biadjoints preserve bilimits
and left biadjoints preserve bicolimits. 

\subsection*{Coproducts and coexponentials}

We shall deal with bicategories with finite (bicategorical) products in which
certain objects are (bicategorically) exponentiable.  However, these will
arise as the opposite bicategories with (bicategorical) finite coproducts in
which certain objects are (bicategorically) coexponentiable.  For this reason,
we recall the latter notions.

\begin{defn} 
A bicategory $\ccatK$ has \myemph{binary coproducts} whenever the diagonal
functor $\ccatK \to \ccatK \times \ccatK$ has a left biadjoint.
\end{defn}
\noindent
Explicitly, a bicategory $\ccatK$ has binary coproducts if, for every pair of
objects $X_1, X_2 \in \ccatK$, there are an object $X_1 \copr X_2\in\ccatK$ 
and maps $\iota_1 \co X_1 \to X_1\copr X_2$ and 
$\iota_2 \co X_2 \to X_1 \copr X_2$ such that, for every $X \in \ccatK$,
composition with $\iota_1$ and $\iota_2$ induces an equivalence 
\[
\begin{tikzcd}[column sep = 3cm]
\hom\ccatK{X_1 \copr X_2} X 
\ar[r, "{\langle \hom\ccatK{\iota_1}X , \hom\ccatK{\iota_2}X \rangle}"] 
& \hom\ccatK{X_1}X \times \hom\ccatK{X_2}X 
\end{tikzcd}
\]
In particular, for every pair of maps 
$f_1 \co X_1 \to X$ and $f_2 \co X_2 \to X$, we have a map 
$
X_1 \copr X_2 \to X$ 
and universal invertible 2-cells in the diagram below
\[
\begin{tikzcd}
X_1 \ar[dr, "f_1"'] \ar[r, "\iota_1"] & X_1 \copr X_2 \ar[d] & X_2 \ar[l, "\iota_2"']  \ar[dl, "f_2"]   \\
 & X \mathrlap{.} & 
\end{tikzcd}
\]

\begin{defn}  
A bicategory $\ccatK$ has an \myemph{initial object} whenever the unique
pseudofunctor $
\ccatK \to \mathsf 1$ has a left biadjoint.  
\end{defn}
\noindent
Explicitly, a bicategory $\ccatK$ has an initial object if there is an object
$\mathsf 0 \in \ccatK$ such that the unique functor 
$\hom\ccatK{\mathsf 0}X \to \mathrm{1}$ is an equivalence for every 
$X \in \ccatK$.  In particular, for every object $X \in \ccatK$, we have an
essentially unique map $
\mathsf 0 \to X$.

As usual, if a bicategory has binary coproducts and an initial object, then it
has all finite coproducts.  


\begin{defn} 
Let $\ccatK$ be a bicategory with binary coproducts.  An object $X\in\ccatK$
is \myemph{coexponentiable} if the pseudofunctor 
$X\copr (-) \co \ccatK \to \ccatK$ has a left biadjoint.
\end{defn} 
\noindent
Explicitly, in a bicategory $\ccatK$, an object $X\in\ccatK$ is
coexponentiable if, for every object $Y \in \ccatK$, there are an 
object~${(X \coexp Y) \in \ccatK}$ and a map 
$
Y \to X \copr (X \coexp Y)$ 
inducing an equivalence
\[
\hom\ccatK{X \coexp Y}Z  
\simeq
\hom\ccatK Y{X \copr Z}
\]
for all $Z\in\ccatK$.

\subsection*{Dualizability and nuclearity}

Let $(\ccatK, \otimes, \one)$ be a monoidal bicategory.  
An object $X\in \ccatK$ is \myemph{dualizable} if there is an object
$X\dual \in \ccatK$, called the \myemph{dual} of $X$, with two maps 
$\eta \co \one \to X \otimes X\dual$, called the \myemph{coevaluation}, and 
$\varepsilon \co X\dual \otimes X  \to \one$, called the \myemph{evaluation},
such that the composite maps
\begin{equation}
\label{eq:dual}
	\begin{tikzcd}
		X \simeq \one \otimes X\ar[r, " \eta \otimes\id"]
		& (X \otimes X\dual )\otimes X
    \simeq 
		 X \otimes (X\dual \otimes X)
		\ar[r, "\id\otimes\varepsilon"]
		& X \otimes \one \simeq X 
    \mathrlap{,}
		\\
		X\dual \simeq X\dual \otimes \one
		\ar[r, "\id\otimes\eta"] 
		& X\dual \otimes (X \otimes X\dual)
    \simeq
		 (X\dual \otimes X) \otimes X\dual
		\ar[r, "\varepsilon\otimes\id"]
		& \one\otimes X\dual \simeq X\dual
	\end{tikzcd}
\end{equation}
are isomorphic to the identities of $X$ and $X\dual$, respectively.  
We note for the record that every such structure, called a \emph{dual pair},
can actually be turned into a structure equipped with invertible 2-cells
satisfying swallowtail coherence conditions, see~\cite[Definition~3.11 and
Theorem~3.14]{Pstrgowski:OnDualizableObjects}.

We henceforth rely on coherence theorems for symmetric monoidal
bicategories~\cite{GordonR:coht,GurskiN:cohtdc} eliding coherent pseudonatural
maps to simplify both notation and the development, for instance, as in the
following.
Let $X, X\dual\in \ccatK$ be a dual pair.  Then, for all objects $Y,Z\in
\ccatK$,
we have pseudonatural equivalences
\begin{align*}
\hom\ccatK{Y\otimes X}{Z}
\xrightarrow{-\otimes X\dual}
\hom\ccatK{Y\otimes X\otimes X\dual}{Z\otimes X\dual}
\xrightarrow{\hom\ccatK{Y\otimes\eta}{Z\otimes X\dual}}
\hom\ccatK{Y}{Z\otimes X\dual}
\mathrlap,
\\[2mm]
\hom\ccatK{Y\otimes X}{Z}
\xleftarrow{\hom\ccatK{Y\otimes X}{Z\otimes \varepsilon}}
\hom\ccatK{Y\otimes X}{Z\otimes X\dual\otimes X}
\xleftarrow{-\otimes X}
\hom\ccatK{Y}{Z\otimes X\dual}
\mathrlap.
\end{align*}
Thus, $(-)\otimes X$ has right biadjoint $(-)\otimes X\dual$.  
Moreover, if $\ccatK$ is symmetric monoidal, then the pair $X\dual,X\in\ccatK$
is also dual and we additionally have that $(-)\otimes X\dual$ has right
biadjoint
$(-)\otimes X$.  
We therefore have the following.

\begin{prop} \label{prop:DualInSMCBiCs}
For every dual pair $X,X\dual$ in a symmetric monoidal closed bicategory
with structure $\big((-)\otimes(=),\one,\map-=\big)$, 
\[
  X\dual\otimes(-)
  \ \dashv \
  (-)\otimes X
  \ \dashv \
  X\dual\otimes(-)
  \ \simeq \
  \map X-
  \mathrlap.
\]
Moreover, for every object $Y$, we have a canonical equivalence
\[
  \map X\one \otimes Y 
  \xrightarrow{\simeq}
  X\dual \otimes Y
  \xrightarrow{\simeq}
  \map X Y
\]
isomorphic to the canonical strength map 
\[
\stmap_{X,Y} \co \map X\one \otimes Y \to \map X Y
\]
given by the adjoint transpose of the composite 
$\map X\one \otimes Y \otimes X
 \simeq
 \map X\one \otimes X \otimes Y
 \to 
 \one\otimes Y 
 \simeq 
 Y$
arising from the evaluation map.
\end{prop}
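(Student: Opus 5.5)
The plan is to assemble the three biadjunctions from the two pseudonatural equivalences displayed just before the statement, and then to identify the resulting equivalence with the strength map by tracing units and counits. Throughout, I use the coherence theorems for symmetric monoidal bicategories to suppress associators, unitors and braidings.

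\emph{Step 1: the biadjunctions.} The first displayed chain of pseudonatural equivalences gives $\hom\ccatK{Y\otimes X}{Z}\simeq\hom\ccatK{Y}{Z\otimes X\dual}$, so $(-)\otimes X \dashv (-)\otimes X\dual$. Since $\ccatK$ is symmetric, $(X\dual,X)$ is also a dual pair. Its coevaluation is $\one\to X\otimes X\dual\simeq X\dual\otimes X$, obtained by composing $\eta$ with the braiding, and its evaluation is obtained similarly from $\varepsilon$. The triangle identities~\eqref{eq:dual} for the swapped pair follow from those of the original pair by conjugating with the braiding. Applying the same argument to $(X\dual,X)$ gives $(-)\otimes X\dual\dashv(-)\otimes X$. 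Rewriting $(-)\otimes X\dual$ as $X\dual\otimes(-)$ via the braiding, we obtain $X\dual\otimes(-)\dashv(-)\otimes X\dashv X\dual\otimes(-)$.

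Closedness means $(-)\otimes X\dashv\map X-$. Right biadjoints are unique up to pseudonatural equivalence, because both represent the pseudofunctor $\hom\ccatK{(-)\otimes X}{Z}$. Hence $X\dual\otimes(-)\simeq\map X-$.

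\emph{Step 2: the canonical equivalence.} Evaluating the equivalence from Step 1 at $\one$ gives $X\dual\simeq X\dual\otimes\one\simeq\map X\one$. Tensoring with $Y$ and composing with the Step 1 equivalence at $Y$ yields the composite $\map X\one\otimes Y\simeq X\dual\otimes Y\simeq\map XY$.

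\emph{Step 3: identifying the composite with the strength.} This is the only real point to check. I would compare adjoint transposes under $(-)\otimes X\dashv\map X-$, since a map into $\map XY$ is determined up to isomorphism by its transpose $(-)\otimes X\to Y$. Under the equivalence $X\dual\otimes(-)\simeq\map X-$, the counit of the closed structure, $\map XY\otimes X\to Y$, corresponds to $X\dual\otimes Y\otimes X\simeq X\dual\otimes X\otimes Y\xrightarrow{\varepsilon\otimes Y}Y$. This follows from how the adjunction of Step 1 was built, since its counit is the second displayed chain evaluated at the identity. In particular, at $Y=\one$ the evaluation $\map X\one\otimes X\to\one$ corresponds to $\varepsilon$ itself.

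The transpose of the composite in Step 2 is therefore
\[
\map X\one\otimes Y\otimes X\simeq X\dual\otimes X\otimes Y\xrightarrow{\varepsilon\otimes Y}Y .
\]
Replacing $X\dual$ by $\map X\one$ and $\varepsilon$ by the evaluation map, this is exactly the composite that defines $\stmap_{X,Y}$. Hence the two maps agree up to isomorphism. The isomorphism is built from coherence cells, and pseudonaturality in $Y$ follows from the pseudonaturality of all the ingredients.
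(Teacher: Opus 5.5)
Your proposal is correct and follows the paper's own argument. The two displayed chains of pseudonatural equivalences give $(-)\otimes X\dashv(-)\otimes X\dual$, symmetry makes $X\dual, X$ a dual pair and so yields the other biadjunction, and uniqueness of right biadjoints gives $X\dual\otimes(-)\simeq\map X-$. Your Step~3, which compares adjoint transposes using the matching of counits, spells out the identification with $\stmap_{X,Y}$ that the paper leaves implicit.
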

\noindent
It follows that every dual object is nuclear in the sense of the following
definition that directly generalizes the established corresponding categorical
notion~\cite{Rowe}.

\begin{defn}
An object $X$ in a symmetric monoidal closed bicategory is called
\myemph{nuclear} whenever the canonical strength map 
$\stmap_{X,X} \co \map X\one \otimes X \to \map X X$ is an equivalence.
\end{defn}

The folklore equivalence between the categorical notions of a dual and a
nuclear object (see~\eg~\cite{deLacroixSantocanale}) generalizes to the
bicategorical setting.

\begin{thm} \label{thm:DualizableEqualNuclear}
In a symmetric monoidal closed bicategory, an object is dualizable if and only
if it is nuclear.
\end{thm}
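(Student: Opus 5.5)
The forward direction is already available: by \cref{prop:DualInSMCBiCs}, if $X$ has a dual $X\dual$ then $\stmap_{X,Y}$ is isomorphic to the composite of equivalences $\map X\one\otimes Y \simeq X\dual\otimes Y \simeq \map XY$. Taking $Y = X$ shows that $X$ is nuclear. The real work is therefore the converse. The plan is to follow the classical 1-categorical argument, namely that the identity lifts along the strength. We take $X\dual \defeq \map X\one$ as the candidate dual, let the evaluation $\varepsilon \co \map X\one\otimes X \to \one$ be the counit of the closed structure, and build the coevaluation from nuclearity.

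\textbf{Constructing $\eta$.} Let $\tilde\id \co \one \to \map XX$ be the transpose of (the unitor composed with) $\id_X$. Fix a pseudo-inverse $\sigma$ of $\stmap_{X,X}$, together with invertible 2-cells $\sigma\,\stmap_{X,X}\cong\id$ and $\stmap_{X,X}\,\sigma \cong \id$. Define
\[
  \eta \co \one \xrightarrow{\tilde\id} \map XX \xrightarrow{\sigma} \map X\one\otimes X \simeq X\otimes \map X\one ,
\]
where the last equivalence is the symmetry. We then have to produce invertible 2-cells exhibiting both zig-zag composites in \eqref{eq:dual} as isomorphic to identities. By \cite{Pstrgowski:OnDualizableObjects}, no swallowtail conditions need to be checked at this stage.

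\textbf{First zig-zag (on $X$).} We check this after transposing across the closed structure. The key lemma is a naturality/compatibility property of the strength: for every $Y$, the composite
\[
\map X\one\otimes X\otimes Y \xrightarrow{\id\otimes\ldots}
\]
and more precisely the map $(\varepsilon\otimes\id_Y)\circ(\text{symmetry})$ applied to $\map X\one\otimes Y\otimes X$, is the transpose of $\stmap_{X,Y}$. This holds by definition of $\stmap$. Taking $Y=X$, the zig-zag composite $X \to X$ becomes
\[
\mathrm{ev}\circ\big((\stmap_{X,X}\,\sigma\,\tilde\id)\otimes \id_X\big),
\]
up to coherence isomorphisms that we elide as the paper does. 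Using $\stmap\,\sigma\cong\id$ this reduces to $\mathrm{ev}\circ(\tilde\id\otimes\id_X)\cong\id_X$, which is the triangle isomorphism of the biadjunction $(-)\otimes X\dashv\map X-$.

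\textbf{Second zig-zag (on $\map X\one$).} This is the main obstacle, because nuclearity is stated only for $Y=X$, whereas this zig-zag naturally involves $\map X\one$. We work by transposition, so it suffices to show that the composite $\map X\one \to \map X\one$, transposed to $\map X\one\otimes X\to\one$, is isomorphic to $\varepsilon$. Precomposing with $\id\otimes X$ and using the first zig-zag isomorphism already obtained, the transpose becomes
\[
\varepsilon\circ(\id\otimes \text{zigzag}_X)\cong\varepsilon ,
\]
where the step rests on the interchange 2-cell between $\eta$ and $\varepsilon$ in the symmetric monoidal bicategory (the standard "sliding" argument). The plan here is to use pseudonaturality of the strength in its second variable, applied to the map $\varepsilon$, in order to move the $\stmap\,\sigma$ cancellation through.

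Most of the effort will go into keeping track of the coherence 2-cells (associators, symmetries, pseudonaturality constraints). We will handle these by appeal to the coherence theorems \cite{GordonR:coht,GurskiN:cohtdc}, exactly as in the discussion preceding \cref{prop:DualInSMCBiCs}.
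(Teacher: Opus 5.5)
Your proposal is correct and follows the paper's proof. You use the same dual $\map X\one$, the same evaluation, and the same coevaluation $\one\to\map XX\simeq X\otimes\map X\one$ obtained by inverting $\stmap_{X,X}$; the paper states these maps without checking the zig-zags, and you additionally sketch that check. In that sketch, the second zig-zag already follows from the first by transposition and the sliding argument, so your closing appeal to pseudonaturality of the strength is unnecessary.
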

\begin{proof}
$(\Rightarrow)$ Follows from \cref{prop:DualInSMCBiCs}.

$(\Leftarrow)$
For an object $X$, let $\one \to \map XX$ be the adjoint transpose of the
canonical equivalence $\one\otimes X\simeq X$.  For $X$ nuclear, we have a
dual pair then $X, \map X\one$ with coevaluation and evaluation maps given by
\[
  \one \to \map XX 
  \simeq X\otimes \map X\one
  \enspace,\quad
  \map X\one \otimes X \to \one
  \qedhere
\]
\end{proof}

In a bicategory, a map $f \co X\to Y$ is said to be a \myemph{pseudo-section}
(respectively, \myemph{pseudo-retraction}) if there is a map 
$g \co Y \to X$ such that the endomap $gf$ on $X$ (respectively, $fg$ on $Y$) 
is isomorphic to the identity on $X$ (respectively, $Y$).  In this case, we
say that $X$ is a \myemph{deformation retract} of $Y$.

\begin{lem}%
\label{lem:Rectractdualizable}
In a symmetric monoidal closed bicategory, any deformation retract of a
nuclear object is nuclear.
\end{lem}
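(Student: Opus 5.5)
By \cref{thm:DualizableEqualNuclear}, nuclearity is the same as dualizability in a symmetric monoidal closed bicategory. So it suffices to show that a deformation retract of a dualizable object is dualizable, and then convert back. I also plan a direct check on the strength map as a cross-check.

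Set-up: let $Y$ be dualizable with dual $Y\dual$, coevaluation $\eta_Y$ and evaluation $\varepsilon_Y$. Let $s\co X\to Y$ and $r\co Y\to X$ satisfy $rs\cong\id_X$. Put $e = sr\co Y\to Y$. The candidate dual of $X$ is $X\dual \defeq Y\dual$, and I avoid splitting idempotents on the dual side by using the maps
\[
\eta_X\co \one\xrightarrow{\eta_Y} Y\otimes Y\dual\xrightarrow{r\otimes\id} X\otimes Y\dual,
\qquad
\varepsilon_X\co Y\dual\otimes X\xrightarrow{\id\otimes s} Y\dual\otimes Y\xrightarrow{\varepsilon_Y}\one .
\]

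First triangle identity. The composite $X\to X$ is $(r\otimes\varepsilon_Y)\circ(\eta_Y\otimes\id)\circ s$, up to coherence, using pseudonaturality of the associators and unitors to slide $r$ and $s$ past the structure maps. By the triangle identity for $Y$, this is isomorphic to $r\circ\id_Y\circ s = rs\cong\id_X$.

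Second triangle identity. The composite $Y\dual\to Y\dual$ is the zigzag for $Y$ with $e=sr$ inserted on the middle strand. By the triangle identity for $Y$ this is the mate $e^\vee$ of $e$, which need not be the identity. This is the main obstacle: $(X, Y\dual)$ is not a dual pair without further work.

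I would fix it in one of two ways. The first option is to replace $Y\dual$ by a retract of it. We have $e^\vee\circ e^\vee\cong (ee)^\vee$ and $ee = srsr\cong sr=e$, so $e^\vee$ is a pseudo-idempotent. One can take $X\dual$ to be the splitting of $e^\vee$; concretely, in a closed setting, $X\dual=\map X\one$, with the retraction data $\map s\one$ and $\map r\one$ relating it to $\map Y\one\simeq Y\dual$.

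The second option, which I think is cleaner, is to work directly with nuclearity. The strength $\stmap_{A,B}$ is pseudonatural in $A$ (contravariantly) and in $B$. Then $\stmap_{X,X}$ is a retract, in the arrow bicategory, of $\stmap_{Y,Y}$, via the maps $\map r\one\otimes s$ and $\map s\one\otimes r$ on domains and $\map r s$ and $\map s r$ on codomains. The commuting squares hold by pseudonaturality, and the composites are isomorphic to identities because $rs\cong\id$.

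The final step is to show that a retract of an equivalence is an equivalence. Given an inverse $u$ of $\stmap_{Y,Y}$, the candidate inverse of $\stmap_{X,X}$ is $(\map s\one\otimes r)\circ u\circ\map rs$. One then checks both composites are isomorphic to identities, using the naturality squares and $rs\cong\id$. This is the one genuinely bicategorical verification, and it only needs invertible 2-cells to be pasted, not their coherence.
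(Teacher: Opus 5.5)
Your second option is the paper's proof. The paper's one-line argument is that $\stmap_{X,X}$ is a deformation retract of the equivalence $\stmap_{Y,Y}$, and your retraction data ($\map r\one\otimes s$, $\map s\one\otimes r$ on domains, $\map r s$, $\map s r$ on codomains) with the candidate inverse $(\map s\one\otimes r)\circ u\circ\map r s$ correctly spell out that argument. The detour through \cref{thm:DualizableEqualNuclear} and the splitting of $e^\vee$ is unnecessary, though that route would also go through if you completed it.
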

\begin{proof}
Because, for a deformation retract $X$ of an nuclear object $Y$, the canonical
strength map $\stmap_{X,X}$ is a deformation retract of the equivalence
canonical strength map $\stmap_{Y,Y}$.
\end{proof}

\section{Cocomplete categories and dualizability}
\label{sec:dual-coc}
\label{sec:coc}

\subsection*{Cocomplete categories} 
\label{subsec:CocompleteCategories}

We say that a category is \emph{cocomplete} if it has small colimits
(\ie~colimits indexed by small categories).  Specifically, recalling the size
conventions set in \cref{subsec:SizeConventions}, a cocomplete category is a
$\lambda$-small category with colimits of $\kappa$-small diagrams.  Beware
that we are not imposing that categories are locally small.	

We write $\COC$ for the large bicategory of cocomplete categories,
cocontinuous functors, and natural transformations. 

\begin{ex}\leavevmode
\label{ex:coc}
\begin{enumerate}
\item\label{ex:coc:zero} 
  The terminal category is in $\COC$, where it is a zero object (\ie~both
  initial and terminal) and therein denoted $\zeroobj$. 

\item 
  The categories $\Set$ and $\Cat$ are in $\COC$, as so are the functor
  categories $\Set^A$ and $\Cat^A$ for every category $A$.
\end{enumerate}
\end{ex}

We consider closure under colimits in the vein of
\cite{Lindner,KellyGM:bascec,AlbertKelly}.

\begin{defn} \label{defn:coc:spsh} 
For a category $\catA$, define $\spshA$ as the smallest large full subcategory
of the large functor category $\SET^{A^\op}$ containing the representable
functors $A(-,a) \co A^\op \to \SET$, for all $a\in A$, and closed under small
colimits.  
\end{defn}

For every category $\catA$, under our assumption $\kappa>\aleph_0$, it can be
shown that $\spshA$ is equivalent to the category spanned by small colimits of
representable functors in $\SET^{A^\op}$, and hence that it is in $\COC$.  The
category $\spshA$ is henceforth referred to as the \emph{presheaf category}
over the category $A$.  In particular, the presheaf category $\spshA$ over a
small category $A$ is the functor category $\Set^{A^\op}$.

A cocomplete category is said to be \emph{presentable} 
if it can be presented as a localisation in $\COC$ of a presheaf category over
a small category inverting a small set of maps.  
We denote by $\Pres$ the full sub-bicategory of $\COC$ spanned by the
presentable categories. 

\medskip
The \emph{presheaf construction} $\spsh$ (\cref{defn:coc:spsh}) together with
the Yoneda embeddings 
\begin{align*}
\yon_\catA \co \catA \embto \spsh(\catA)
\qquad
(A\in\CAT)\mathrlap{,}
\end{align*}
mapping objects to representable functors, induce the equivalences
\begin{align*}
  (-)\circ\yon_\catA 
  \co  \COC(\spsh\catA,\catC) \xrightarrow{\enspace\simeq\enspace} \CAT(A,C)
  \qquad
  (A\in\CAT, C\in\COC)
\end{align*}
and provides a left biadjoint to the inclusion pseudofunctor 
$\sfUin \co \COC \to \CAT$;
see~\cite{KockA:monwua,ZoberleinV:doc2c,KellyGM:bascec}, with modern accounts
in~\cite{LackS:monccc,GarnerR:lexc}.

\begin{prop} \label{prop:coc-monadic} 
There is a pseudomonadic biadjunction
\begin{equation} \label{display:CAT-COC-adjunction}
\begin{tikzcd}
\CAT
\ar[r, shift left = 2, "\spsh"] 
\ar[r, phantom, description, "\scriptstyle{\vdash}" rotate=90]
&
\ar[l, shift left = 2 , "\sfUin"] 
\COC
\mathrlap{,} 
\end{tikzcd}
\end{equation}
with $\spsh$ mapping $\Cat\fullsubcat\CAT$ to $\Pres\fullsubcat\COC$.
Moreover, the induced monad $\sfUin\spsh$ preserves bicolimits
of $\lambda$-small $\kappa$-filtered diagrams.
\end{prop}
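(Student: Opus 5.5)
The proposition has three parts, and I will prove them in turn. First, the biadjunction $\spsh\dashv\sfUin$ is pseudomonadic. Second, $\spsh$ sends small categories to presentable ones. Third, the induced pseudomonad $\sfUin\spsh$ preserves bicolimits of $\lambda$-small $\kappa$-filtered diagrams.

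The biadjunction itself is the equivalence $(-)\circ\yon_\catA\co\COC(\spsh\catA,\catC)\simeq\CAT(A,C)$ recalled just before the statement. For pseudomonadicity I will use the standard fact that $\spsh$ is a lax-idempotent (KZ) pseudomonad whose pseudoalgebras are exactly the cocomplete categories. The argument runs as follows.

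\begin{enumerate}
\item A pseudoalgebra structure $a\co\spsh C\to C$ is forced to be left adjoint to $\yon_C$, with invertible counit $a\yon_C\cong\id$. Since $\spsh C$ consists of small colimits of representables, this says exactly that $C$ has small colimits, given by $a$.
\item Morphisms of pseudoalgebras are functors commuting up to coherent isomorphism with these left adjoints, that is, cocontinuous functors.
\item Algebra 2-cells are arbitrary natural transformations.
\end{enumerate}

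Hence the comparison pseudofunctor $\COC\to\mathrm{Ps}\text{-}\sfUin\spsh\text{-}\mathrm{Alg}$ is a biequivalence. Alternatively, one could invoke a bicategorical Beck theorem, with $\sfUin$ creating the relevant codescent objects. I would cite the KZ route from Kock and Zöberlein, or from Lack's account.

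For the second claim, when $A$ is small, $\spsh A=\Set^{A^\op}$ is the trivial localisation of itself (by the empty set of maps), so it is presentable by definition.

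For filtered colimits, let $A\simeq\bicolim_i A_i$ be a $\lambda$-small $\kappa$-filtered bicolimit in $\CAT$. By \cref{lem:ColimitPresentationForCAT} one may reduce to diagrams of small categories, though this is not needed. I must show that the canonical functor $\bicolim_i\spsh(A_i)\to\spsh(A)$ is an equivalence in $\CAT$. Filtered bicolimits in $\CAT$ are computed by a pseudo-colimit whose objects are pairs $(i,X)$ with $X\in A_i$, and whose hom-sets are filtered colimits of hom-sets. The plan is:
\begin{enumerate}
\item Show essential surjectivity. A presheaf $F\in\spsh(A)$ is a small ($\kappa$-small) colimit of representables $\yon(a_j)$, with the transition maps between them lying in $A$. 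By $\kappa$-filteredness, the $\kappa$-small set of objects $a_j$ and maps occurring in such a diagram all come from a single stage $A_i$, and the diagram commutes there. Hence $F$ is the image of a presheaf in $\spsh(A_i)$.
\item Show full faithfulness. Maps in $\spsh(A)$ between such colimits are computed as a small limit of small colimits, namely hom-sets of $A$. Since $\kappa$-filtered colimits of sets commute with $\kappa$-small limits, this matches the filtered colimit of the hom-sets $\spsh(A_i)(F_i,G_i)$.
\end{enumerate}

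I expect the main obstacle to be this last step. One has to handle size carefully, since the categories $\spsh(A_i)$ need not be locally small. One also has to work with the pseudo-colimit rather than a strict one. In particular, the key claim is that every small diagram and every commuting equation in $A$ is witnessed at some stage $A_i$; this is what requires $\kappa$-filteredness, with $\kappa$ exceeding the size of the indexing diagrams. The $\lambda$-smallness of the diagram guarantees that the resulting bicolimit is again a category in our sense.
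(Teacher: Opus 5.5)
Your proposal is correct and takes essentially the paper's approach: the paper gives no separate proof, and grounds the biadjunction and its pseudomonadicity in the Kock--Z\"oberlein (lax-idempotent) theory of free cocompletion through the references cited just before the statement, as you do. The paper leaves the presentability of $\spsh A$ for small $A$ and the preservation of $\lambda$-small $\kappa$-filtered bicolimits implicit, and your arguments correctly fill this in: the trivial localisation for the first, and for the second, lifting $\kappa$-small diagrams of representables to a single stage and interchanging $\kappa$-filtered colimits with $\kappa$-small limits in $\SET$ to compare hom-sets.
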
 

\begin{lem} \label{lem:ColimitPresentationForCOC}
Every cocomplete category is a bicolimit of a $\lambda$-small diagram of
presheaf categories over $\kappa$-small categories.  
\end{lem}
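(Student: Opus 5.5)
The plan is to use the standard resolution of a pseudoalgebra by free pseudoalgebras for the pseudomonadic biadjunction $\spsh\dashv\sfUin$ of \cref{prop:coc-monadic}, and then reduce the size of the indexing categories with \cref{lem:ColimitPresentationForCAT}.

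Let $C\in\COC$. Since $\sfUin$ is pseudomonadic, $C$ is the bicolimit in $\COC$ (more precisely, a codescent object) of its canonical resolution by free algebras
\[
\spsh\sfUin\spsh\sfUin\spsh\sfUin C \;\substack{\to\\[-1ex]\to\\[-1ex]\to}\; \spsh\sfUin\spsh\sfUin C \;\rightrightarrows\; \spsh\sfUin C \longrightarrow C ,
\]
where the face maps come from the counit and the multiplication. Concretely, the map $\spsh C\to C$ is the cocontinuous extension of $\id_C$. This expresses $C$ as a bicolimit of a finite, hence $\lambda$-small, diagram of presheaf categories $\spsh A$. The categories $A$ that occur are $C$, $\spsh C$, $\spsh\spsh C$, which are categories in the sense of \cref{tab:size} but not small. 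The remaining work is to replace each such $\spsh A$ by a bicolimit of presheaf categories over $\kappa$-small categories.

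For this, apply \cref{lem:ColimitPresentationForCAT} to write each $A$ as a bicolimit in $\CAT$ of a $\lambda$-small $\kappa$-filtered diagram $(A_i)$ of $\kappa$-small categories. The functor $\spsh$ is a left biadjoint, so it preserves bicolimits, and hence $\spsh A\simeq\bicolim_i \spsh A_i$ in $\COC$. Substituting these presentations into the codescent diagram gives an iterated bicolimit of presheaf categories over $\kappa$-small categories, indexed by $\lambda$-small data. The final step combines them into a single $\lambda$-small diagram. An iterated bicolimit is a bicolimit of the diagram obtained by a Grothendieck-type construction, or equivalently a weighted bicolimit, which can be converted to a conical one of comparable size. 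This step uses only that $\lambda$ is regular and that all the indexing shapes are $\lambda$-small.

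I expect the main obstacle to be the face maps, which are not themselves in the image of the small presentations. For example, the counit $\spsh C\to C$ is not of the form $\spsh(f)$ for a map $f$ between the small pieces. To handle this, I would first take the bicolimit as a codescent object of the resolution, whose vertices are all presheaf categories, and then present each vertex separately. A bicolimit of a diagram whose vertices are bicolimits of presheaf categories over $\kappa$-small categories is again such a bicolimit, with the maps between the pieces left arbitrary cocontinuous functors. This is legitimate because the statement only requires the objects of the diagram to be presheaf categories over $\kappa$-small categories, not the maps to be induced by functors. Finally, I would check that the diagram has $\lambda$-small size, which holds because $C$, $\spsh C$ and $\spsh\spsh C$ all have fewer than $\lambda$ isomorphism classes of objects and maps.
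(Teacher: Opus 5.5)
There is a genuine gap in your final combination step. Your first two steps are fine and use the same ingredients as the paper: the codescent resolution coming from pseudomonadicity, and \cref{lem:ColimitPresentationForCAT} together with $\spsh$ preserving bicolimits.

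The trouble is turning the iterated bicolimit into a single one. A Grothendieck-type construction needs the inner presentations to vary pseudofunctorially over the codescent shape. Every face map $d\co\spsh B\to\spsh B'$ of the resolution, and every invertible 2-cell in it, must act on your chosen presentations $\spsh B\simeq\bicolim_i\spsh B_i$ and $\spsh B'\simeq\bicolim_k\spsh B'_k$. The typical problematic face map is $\mu_C\co\spsh\spsh C\to\spsh C$. The augmentation $\spsh C\to C$ you mention is the colimit cocone, not part of the diagram.

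You apply \cref{lem:ColimitPresentationForCAT} to $C$, $\spsh C$ and $\spsh\spsh C$ independently. Nothing you say makes $\spsh B_i\to\spsh B\xrightarrow{d}\spsh B'$ factor through any piece $\spsh B'_k$. It is even less clear that such factorizations could be chosen coherently in $i$ and compatibly with the 2-cells.

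The principle you invoke instead is that a bicolimit of bicolimits of presheaf categories over $\kappa$-small categories is again such a bicolimit. This is not a formal fact. Combined with the easy resolution step it already gives the lemma, so it carries essentially all of the lemma's content. Allowing the maps between pieces to be arbitrary cocontinuous functors does not help, since the problem is whether such maps exist and cohere, not what form they take.

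The ingredient you never use, and which the paper invokes at exactly this point, is the finitariness in \cref{prop:coc-monadic}. Since $\sfUin\spsh$ preserves $\lambda$-small $\kappa$-filtered bicolimits, you get $\sfUin\spsh B'\simeq\bicolim_k\sfUin\spsh B'_k$ in $\CAT$. Since $\kappa$-small categories are $\kappa$-presentable in $\CAT$, it follows that $\hom\COC{\spsh B_i}{\spsh B'}\simeq\hom\CAT{B_i}{\sfUin\spsh B'}$ is the $\kappa$-filtered bicolimit of the categories $\hom\CAT{B_i}{\sfUin\spsh B'_k}$. So each restricted face map, and each 2-cell between such maps, factors through some stage.

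Even then you would still have two things to do:
\begin{enumerate}
\item Choose these factorizations coherently. For instance, present each $B$ by its $\kappa$-small full subcategories; then the comparison functors $\spsh B'_k\to\spsh B'$ are fully faithful and factorizations are unique up to unique isomorphism.
\item Check, by a cofinality argument, that the resulting single diagram has bicolimit $C$.
\end{enumerate}

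The paper organizes this differently. It presents $C$ only once, as $\bicolim_{i\in I}A_i$, and uses finitariness to get $\spsh^nC\simeq\bicolim_i\spsh^nA_i$ uniformly in $n$. The whole resolution is then indexed by the single $\kappa$-filtered $I$. Its write-up is terse here: the vertices $\spsh^2A_i$ and $\spsh^3A_i$ are still presheaf categories over non-small categories, so an argument like the one above is needed there too.
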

\begin{proof}
Let $C$ be a cocomplete category. 
\begin{enumerate}
\item
By \cref{lem:ColimitPresentationForCAT}, $C$ is a bicolimit in $\CAT$ of a
$\lambda$-small $\kappa$-filtered diagram of $\kappa$-small categories 
$A_i\ (i\in I)$ in $\Cat$. 

\item 
By the pseudomonadicity~\cite[Theorem~3.5]{CreurerI:bectpm} of the
pseudoadjunction~\cref{display:CAT-COC-adjunction}, $C$ is a bicolimit in
$\CAT$ of a codescent diagram (omitting the iso 2-cells) 
\[\begin{tikzcd}
\spsh^3 C  
\ar[r, shift left = -3] 
\ar[r, shift left = 3] 
\ar[r, shift left = 0] 
& 
\spsh^2 C  
\ar[r, shift left = -3] 
\ar[r, shift left = 3] 
& 
\ar[l]
\spsh C  
\mathrlap.
\end{tikzcd}\]
\end{enumerate}

Thus, as the pseudomonad of the
pseudoadjunction~\cref{display:CAT-COC-adjunction} is $\kappa$-finitary, the
diagram of $I$-indexed codescent diagrams (again omitting the iso 2-cells) in
$\CAT$ 
\[\begin{tikzcd}
\spsh^3 A_i
\ar[r, shift left = -3] 
\ar[r, shift left = 3] 
\ar[r, shift left = 0] 
& 
\spsh^2 A _i
\ar[r, shift left = -3] 
\ar[r, shift left = 3] 
& 
\ar[l]
\spsh A_i
&
(i\in I)
\end{tikzcd}\]
has bicolimit $C$ in $\CAT$.
\end{proof}

\medskip
Let $I$ be a small set.  Recall that the functors of sum $\coprod_I$ and of
product $\prod_I$ indexed by $I$ are respectively left and right biadjoints to
the constant $I$-family pseudofunctor $\COC\to \COC^I$.  Recall also from
\cref{ex:coc}\,\ref{ex:coc:zero}, that the terminal category is a zero object
$\zeroobj$ in $\COC$.
Given a family $D \co I \to \COC$, let $\delta \co I \times I \to \COC$ be
given by $\delta(i,j) \defeq (D_i\to \zeroobj \to D_j)$ if 
$i\varnot= j$ and $\delta(i,i) \defeq \Id_{D_i}$ otherwise.  
This \emph{Kronecker delta} universally induces a cocontinuous functor
$\coprod_I D \to \prod_I D$, and we say that the $I$-sum and the $I$-product
coincide whenever it is an equivalence; in which case, this \emph{sum-product}
is denoted $\oplus_I\,D$.

\begin{lem}
\label{lem:coc-additive}
Small sums and small products in $\COC$ coincide. 
\end{lem}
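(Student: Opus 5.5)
To prove \cref{lem:coc-additive}, I would first identify the small product concretely and then exhibit an explicit pseudo-inverse to the Kronecker-delta comparison $\coprod_I D\to\prod_I D$. For a small family $D\co I\to\COC$, the product $\prod_I D$ in $\COC$ is the ordinary product category $\prod_i D_i$. Colimits in it are computed componentwise, so it is cocomplete, and the projections $\pi_i$ are cocontinuous. A cocontinuous functor into it is just a family of cocontinuous functors, so this is indeed the bicategorical product. The plan is to show that $\prod_i D_i$, equipped with suitable injections, is also a bicategorical coproduct in $\COC$. Then the comparison induced by $\delta$ is an equivalence, since it is the canonical map between two coproduct structures.

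The injections will be $\iota_j\co D_j\to\prod_i D_i$, sending $x$ to the family with $x$ in position $j$ and an initial object $0$ in every other position. Each $\iota_j$ is cocontinuous: in position $j$ it is the identity, and elsewhere it is constant at $0$, which preserves small colimits because a colimit of initial objects over a small diagram is initial. (For the empty diagram this still gives $0$.) By construction $\pi_i\iota_j\cong\delta(i,j)$, since the functor $D_j\to\zeroobj\to D_i$ picks out the initial object, $\zeroobj$ being a zero object with the essentially unique cocontinuous map out of it choosing $0$. Given a cocontinuous $C$ and a family $f_j\co D_j\to C$, I would define $f\co\prod_i D_i\to C$ by $f(x)=\coprod_i f_j(x_j)$, a small coproduct because $I$ is small. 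This $f$ is cocontinuous, since coproducts commute with colimits and each $f_j$ is cocontinuous. Moreover $f\iota_j\cong f_j$, because $f_i(0)\cong 0$ for $i\neq j$.

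The key step, and the main obstacle, is showing that restriction along the $\iota_j$ is an equivalence $\COC(\prod_i D_i,C)\to\prod_j\COC(D_j,C)$, that is, that $f$ is essentially unique and that 2-cells correspond. The idea is that every object of $\prod_i D_i$ is canonically a small coproduct of the objects $\iota_i(x_i)$. In the product category, the coproduct $\coprod_i\iota_i(x_i)$ has component $x_j\sqcup\coprod_{i\neq j}0\cong x_j$ in each position $j$. Hence any cocontinuous $g$ satisfies $g(x)\cong\coprod_i g\iota_i(x_i)$, naturally in $x$. This gives essential surjectivity of the restriction functor. For full faithfulness, a natural transformation $g\Rightarrow g'$ is determined by its components on the objects $\iota_i(x_i)$, because these form a coproduct decomposition that is natural in $x$. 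Conversely, any family of transformations $g\iota_i\Rightarrow g'\iota_i$ extends uniquely by taking coproducts of components. Naturality of the extension uses the fact that a morphism $x\to y$ in the product decomposes as the coproduct of the maps $\iota_i(x_i\to y_i)$.

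Finally, the map $\coprod_I D\to\prod_I D$ induced by $\delta$ corresponds, under the coproduct structure just established, to the identity of $\prod_i D_i$. Indeed, both have components $\pi_i\iota_j\cong\delta(i,j)$. It is therefore an equivalence, which proves the lemma.
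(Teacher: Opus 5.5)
Your proof is correct. The paper states this lemma without proof, as a standard fact (it is used to get the projection $\mylift\sfU\mylift\Sym(C)\to C$), so there is no argument of the paper's to compare against. Your direct verification is the standard one: you exhibit $\prod_i D_i$ with the injections $\iota_j$ as a bicategorical coproduct, using the natural decomposition $x\cong\coprod_i\iota_i(x_i)$. You then observe that the $\delta$-induced comparison has, up to isomorphism, the same components as the identity. The inputs you rely on are exactly the right ones: initial objects and small coproducts in each $D_i$ and in $C$, and the fact that cocontinuous functors preserve them.

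Only cosmetic corrections are needed:
\begin{enumerate}
\item The extension should read $f(x)=\coprod_i f_i(x_i)$.
\item With the paper's convention $\delta(i,j)\co D_i\to D_j$, the relevant isomorphism is $\pi_j\iota_i\cong\delta(i,j)$.
\item Essential surjectivity of restriction comes from your construction of $f$. The decomposition $g\cong\coprod_i (g\iota_i)\pi_i$ is what gives essential uniqueness of the extension.
\end{enumerate}
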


The category of cocontinuous functors from a cocomplete category $\cocC$ to
another one $\cocD$, and natural transformations between them, is cocomplete
and denoted $\inthom\COC\cocC\cocD$.  This \myemph{internal hom} has an
associated tensor product~\cite{KellyGM:bascec} that we now recall.
For $\cocE$ a cocomplete category, the 
category 
$\hom\COC \cocC{\inthom\COC \cocD\cocE}$ is equivalently the category of
separately-cocontinuous functors $\cocC\times \cocD\to \cocE$.  The tensor
product $\cocC\otimes \cocD$ is defined as the codomain of a
separately-cocontinuous functor 
${\otimes\co\cocC\times\cocD\to \cocC\otimes \cocD}$ representing the
pseudofunctor $\Hom\COC \cocC{\inthom\COC \cocD{-}}: \COC\to\CAT$.
Explicitly, for every separately-cocontinuous functor 
$F:\cocC \times \cocD\to \cocE$, there is an essentially unique factorisation 
\[
\begin{tikzcd}
\cocC \!\times\! \cocD \ar[r,"\otimes"] \ar[dr, "F"'] & 
\cocC \otimes \cocD \ar[d, "F^\dag"] \\
& \cocE \mathrlap{,}
\end{tikzcd}
\]
where $F^\dag$ is a cocontinuous functor.

\begin{prop} \label{prop:TensorOfPresheafCats}
For categories $\catA$ and $\catB$, 
\[
  \spsh(\catA)\otimes\spsh(\catB) \simeq \spsh(\catA \times \catB)
\]
\end{prop}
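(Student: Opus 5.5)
The plan is to verify that $\spsh(\catA\times\catB)$, together with the functor
$\otimes\co\spsh(\catA)\times\spsh(\catB)\to\spsh(\catA\times\catB)$ defined by external product
$(P\otimes Q)(a,b)\defeq P(a)\times Q(b)$, has the universal property defining the tensor product.
First I will check that this functor is well defined and separately cocontinuous. Every $P\in\spsh(\catA)$ is a small colimit of representables, and likewise every $Q\in\spsh(\catB)$. Since products in $\SET$ commute with colimits in each variable separately, $P\otimes Q$ is a small colimit of the functors $\catA(-,a)\times\catB(-,b)\cong(\catA\times\catB)(-,(a,b))$. These are representable, so $P\otimes Q$ lies in $\spsh(\catA\times\catB)$. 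Colimits in each $\spsh$ are computed pointwise in $\SET^{(-)^\op}$ (they are closed under small colimits there), and hence $\otimes$ is separately cocontinuous.

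The universal property will be established as a chain of equivalences, natural in $\cocE\in\COC$:
\begin{align*}
\hom\COC{\spsh(\catA\times\catB)}{\cocE}
&\simeq \hom\CAT{\catA\times\catB}{\cocE}
\simeq \hom\CAT{\catA}{\CATexp{\catB}{\cocE}}\\
&\simeq \hom\CAT{\catA}{\inthom\COC{\spsh(\catB)}{\cocE}}
\simeq \hom\COC{\spsh(\catA)}{\inthom\COC{\spsh(\catB)}{\cocE}}\mathrlap.
\end{align*}
The first and last steps are the universal property of the presheaf construction, namely the equivalence $(-)\circ\yon$ from \cref{prop:coc-monadic}. The last step needs $\inthom\COC{\spsh(\catB)}{\cocE}$ to be cocomplete, which is recorded before the statement. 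The second step is cartesian closure of $\CAT$. The third step is again the universal property of $\spsh(\catB)$, applied pointwise. The final hom-category is the category of separately-cocontinuous functors $\spsh(\catA)\times\spsh(\catB)\to\cocE$. So $\spsh(\catA\times\catB)$ represents the same pseudofunctor as $\spsh(\catA)\otimes\spsh(\catB)$, and by uniqueness of representing objects up to equivalence the two are equivalent.

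The step I expect to be the main obstacle is checking that this composite equivalence is indeed induced by precomposition with the explicit functor $\otimes$ described above. This is what identifies the representing map, and not just the abstract representability. To do this, I will trace a cocontinuous $G\co\spsh(\catA\times\catB)\to\cocE$ through the chain. Its image is the separately cocontinuous functor $(P,Q)\mapsto G(P\otimes Q)$. Checking this reduces to representables, where $\yon a\otimes\yon b\cong\yon(a,b)$, and then extends by separate cocontinuity. This uses the fact that a separately cocontinuous functor is determined up to isomorphism by its restriction to $\catA\times\catB$, itself a consequence of the presheaf universal property applied twice.

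A secondary point is size. Neither $\catA$ nor $\catB$ is assumed small, so I will rely on \cref{defn:coc:spsh} and the remark that $\spsh$ of any category is the category of small colimits of representables. This ensures that all categories appearing above lie in $\COC$ and that the universal properties apply verbatim.
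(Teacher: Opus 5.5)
Your proposal is correct and is essentially the paper's proof, which uses the same chain of pseudonatural equivalences $\hom\COC{\spsh(\catA\times\catB)}{-}\simeq\hom\CAT{\catA\times\catB}{-}\simeq\Hom\CAT{\catA}{\hom\CAT{\catB}{-}}\simeq\Hom\CAT{\catA}{\hom\COC{\spsh(\catB)}{-}}\simeq\Hom\COC{\spsh(\catA)}{\inthom\COC{\spsh(\catB)}{-}}$ and concludes by representability. Your extra identification of the universal separately-cocontinuous functor as the external product $(P,Q)\mapsto P(-)\times Q(-)$ is a refinement the paper leaves implicit, but the statement does not need it.
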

\begin{proof}
It follows from the equivalences:
\begin{align*}
\hom\COC{\spsh(\catA \times \catB)}{-}
&\simeq \hom\CAT{\catA \times \catB}{-}
\\
&\simeq \Hom\CAT{\catA}{\hom\CAT \catB-}
\\
&\simeq \Hom\CAT{\catA}{\hom\COC{\spsh(\catB)}-}
\\
&\simeq \Hom\COC{\spsh(\catA)}{\inthom\COC{\spsh(\catB)}-}
\end{align*}
for all categories $\catA$ and $\catB$.
\end{proof}

In general, for cocomplete categories $\cocC$ and $\cocD$, the cocomplete
category $\cocC \otimes \cocD$ can be constructed as the localisation in
$\COC$ of $\spsh(\cocC \!\times\! \cocD)$ inverting the small set of canonical
maps $\big[(\iota_j,\id)\big]_j \co \colim_j(X_j,Y)\to (\colim_j X_j,Y)$ and
$\big[(\id,\iota_k)\big]_k \co \colim_k(X,Y_k)\to (X, \colim_k Y_k)$, for all
small diagrams $X:J\to \cocC$ and $Y:K\to \cocD$.  This explicit description
shows that if $\cocC$ and $\cocD$ are presentable, then so is
$\cocC\otimes\cocD$.
The unit of the tensor product is the category $\Set$ since 
$\hom\COC\Set \cocC \simeq \cocC$, for every cocomplete category $\cocC$.  
It may be shown that this defines a symmetric monoidal closed structure on
$\COC$, see~\cite{KellyGM:bascec,HylandM:psecmpc}.

\begin{prop}
\label{prop:COC-SMC}
The structure $(\otimes,\Set,\inthom\COC-=)$ makes $\COC$ into a symmetric
monoidal closed bicategory.  Moreover, $\Pres\fullsubcat\COC$ is a symmetric
monoidal closed sub-bicategory.
\end{prop}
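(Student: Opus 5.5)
The plan is to obtain the symmetric monoidal closed structure on $\COC$ from the universal property of $\otimes$ as a representing object for separately-cocontinuous functors. Then we show that $\Pres$ is closed under the three pieces of structure: tensor, unit, and internal hom.

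\emph{Closed structure.} By construction $\cocC\otimes\cocD$ represents $\Hom\COC{\cocC}{\inthom\COC{\cocD}{-}}$, so there is a pseudonatural equivalence
\[
\hom\COC{\cocC\otimes\cocD}{\cocE}\simeq\Hom\COC{\cocC}{\inthom\COC\cocD\cocE}.
\]
This exhibits $(-)\otimes\cocD\dashv\inthom\COC\cocD-$ as a biadjunction once $\otimes$ is made pseudofunctorial. Pseudofunctoriality follows from the essentially unique factorisation $F^\dag$, applied to the composite $\cocC\times\cocD\to\cocC'\times\cocD'\xrightarrow{\otimes}\cocC'\otimes\cocD'$ for cocontinuous functors $\cocC\to\cocC'$ and $\cocD\to\cocD'$.

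\emph{Monoidal data.} The associator is produced by showing that both $(\cocC\otimes\cocD)\otimes\cocE$ and $\cocC\otimes(\cocD\otimes\cocE)$ classify functors $\cocC\times\cocD\times\cocE\to\cocF$ that are cocontinuous in each variable separately. This uses the adjunction twice and the evident isomorphism of categories of such functors. The unitors come from $\hom\COC\Set\cocC\simeq\cocC$, which gives $\Set\otimes\cocC\simeq\cocC$ because separately-cocontinuous functors $\Set\times\cocC\to\cocE$ correspond to cocontinuous functors $\cocC\to\cocE$ via $S\cdot X=\coprod_S X$. The symmetry comes from the swap $\cocC\times\cocD\cong\cocD\times\cocC$.

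\emph{Coherence.} The main obstacle is verifying the coherence axioms of a symmetric monoidal bicategory: the pentagonator, the hexagonators and syzygies. For these I would not compute directly. Every constraint is determined, up to unique invertible modification, by its restriction along the multi-variable universal maps $\cocC_1\times\dots\times\cocC_n\to\cocC_1\otimes\dots\otimes\cocC_n$. On restriction these constraints become identities or canonical isomorphisms in $\CAT$, which satisfy all axioms strictly. An equivalent and cleaner route is to cite the pseudo-commutative pseudomonad framework of Hyland--Power (the HylandM:psecmpc reference above). There the pseudomonad $\sfUin\spsh$ of \cref{prop:coc-monadic} on $\CAT$ is pseudo-commutative, and its bicategory of pseudo-algebras, which is $\COC$, inherits a symmetric monoidal closed structure whose hom is $\inthom\COC--$. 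I would take this as the actual proof and use the explicit description above to identify that structure with the one stated.

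\emph{$\Pres$.} The unit $\Set\simeq\spsh(\terminalcat)$ is presentable. Closure under $\otimes$ was observed above via the localisation description. For internal homs, write $\cocC$ as a localisation of $\spsh A$ at a small set $\Sigma$ and $\cocD$ as a localisation of $\spsh B$ at a small set, with $A,B$ small. Then $\inthom\COC{\spsh A}{\cocD}\simeq\cocD^{A}$ is presentable, since it is a localisation of $\spsh(A^\op\times B)$ at a small set. The category $\inthom\COC\cocC\cocD$ is the full subcategory of $\cocD^A$ on those functors whose extension inverts $\Sigma$. This is an orthogonality class for a small set of maps in a presentable category, hence a reflective localisation that is again presentable. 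This last step uses standard results on locally presentable categories rather than anything proved earlier in the paper. Since $\Pres$ is a full sub-bicategory containing the unit and closed under $\otimes$ and $\inthom\COC--$, it is a symmetric monoidal closed sub-bicategory.
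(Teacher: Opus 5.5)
Your treatment of $\COC$, and of the unit and tensor for $\Pres$, is essentially what the paper does. The paper only describes $\otimes$ through separately-cocontinuous functors, cites Kelly and Hyland--Power for the symmetric monoidal closed structure, and obtains closure of $\Pres$ under $\otimes$ from the localisation description.

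The genuine problem is your argument that $\Pres$ is closed under internal homs, a point the paper does not spell out. Let $\mathcal{S}\subseteq\cocD^{A}$ be the full subcategory of functors $G$ whose cocontinuous extension $\tilde G\co\spsh A\to\cocD$ inverts $\Sigma$. This $\mathcal{S}$ is \emph{not} in general an orthogonality class, and it is not reflective. Every orthogonality class is closed under limits, since $\hom{\cocD^A}{-}{\lim_i G_i}\cong\lim_i\hom{\cocD^A}{-}{G_i}$. By contrast, invertibility of $\tilde G(\sigma)$ is a condition on colimits formed from $G$, and $\widetilde{\lim_i G_i}$ differs from $\lim_i\tilde G_i$ in general. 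What does hold is that $\mathcal S$ is closed under colimits in $\cocD^A$, because $G\mapsto\tilde G$ is cocontinuous and colimits of cocontinuous functors are computed pointwise.

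For a concrete failure, present $\zeroobj$ as the localisation of $\Set=\spsh(\terminalcat)$ at $\emptyset\to 1$. Then $\mathcal S\subseteq\cocD^{\terminalcat}=\cocD$ is the full subcategory of initial objects. It does not contain the terminal object of $\cocD$ unless $\cocD\simeq\zeroobj$, and it is coreflective, not reflective.

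The conclusion that $\inthom\COC\cocC\cocD$ is presentable is still true, but it needs a different argument. For instance:
\begin{itemize}
\item $\mathcal S$ is cocomplete, being closed under colimits in $\cocD^A$.
\item $\mathcal S$ is accessible. It is the preimage of the accessibly embedded full subcategory of isomorphisms under the functor $\cocD^A\to\prod_{\sigma\in\Sigma}\cocD^{\to}$ sending $G$ to $(\tilde G\sigma)_{\sigma}$. That functor is cocontinuous, hence accessible, so the limit theorem for accessible categories applies.
\item An accessible cocomplete category is locally presentable.
\end{itemize}
Alternatively, cite the standard result that cocontinuous functors between locally presentable categories form a locally presentable category.
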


\subsection*{Dualizable cocomplete categories}
\label{sec:dual}
\label{sec:tensor-COC}

We now turn to the characterization of dualizable objects in the symmetric
monoidal closed bicategory $\COC$. See~\cite{HarpazY:mathoverflow} for a related discussion
on dualizable objects in $\Pres$. 

\begin{thm}%
\label{thm:dualizable-coc}
The dualizable objects of $\COC$ are the deformation retracts of presheaf
categories over small categories.
\end{thm}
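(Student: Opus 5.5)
We prove the two implications separately. The easy direction (deformation retracts are dualizable) uses nuclearity and \cref{lem:Rectractdualizable}; the converse uses a retraction of a dualizable $C$ onto a presheaf category built from its atoms.

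\textbf{Presheaf categories and their retracts are dualizable.} Let $A$ be a small category. By \cref{prop:TensorOfPresheafCats}, $\spsh(A^\op)\otimes\spsh(A)\simeq\spsh(A^\op\times A)$. So we may take $\spsh(A)\dual=\spsh(A^\op)$, with the following structure maps.
\begin{enumerate}
\item The coevaluation $\Set\to\spsh(A\times A^\op)$ picks out the hom-presheaf of $A$. This is where smallness of $A$ is used: it makes the hom-presheaf a small presheaf.
\item The evaluation $\spsh(A^\op\times A)\to\Set$ is the cocontinuous extension of $\hom A{-}{=}\co A^\op\times A\to\Set$. Equivalently, it is the coend/tensor product of a profunctor with the identity.
\end{enumerate}
The zig-zag composites~\cref{eq:dual} become the coYoneda isomorphisms $\int^{a}A(-,a)\times P(a)\cong P$. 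Hence $\spsh(A)$ is dualizable.

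By \cref{thm:DualizableEqualNuclear} it is then nuclear in the symmetric monoidal closed bicategory $\COC$ of \cref{prop:COC-SMC}. By \cref{lem:Rectractdualizable} every deformation retract of it is nuclear, hence dualizable.

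\textbf{Dualizable implies retract: the retraction.} Let $C$ be dualizable with dual $C\dual$. By \cref{prop:DualInSMCBiCs}, $C\dual\simeq\inthom\COC C\Set$ and $\inthom\COC CC\simeq C\dual\otimes C$.

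The natural candidate for the presheaf category is $\spsh(A)$, where $A$ is the full subcategory of $C$ on the small-projective objects (atoms). These are the objects $a$ for which $\hom C a-$ is cocontinuous (and small-valued). Restriction along $A\embto C$ gives a cocontinuous $\spsh(A)\to C$, since $C$ is cocomplete. The nerve $C\to\spsh(A)$, $c\mapsto\hom C{-}c|_A$, is cocontinuous because each $a\in A$ is small-projective. Their composite $C\to\spsh(A)\to C$ is $c\mapsto\colim_{a\to c}a$. We must show this is the identity, i.e.\ that atoms are dense, and that $A$ can be taken essentially small.

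\textbf{Dualizable implies retract: density of atoms.} To get density, write the identity through the coevaluation $\eta\co\Set\to C\otimes C\dual$. The object $\eta(1)\in C\otimes C\dual$ is, by the description of the tensor product as a localisation of $\spsh(C\times C\dual)$, a small colimit of pure tensors $c_j\otimes\phi_j$ with $\phi_j\co C\to\Set$ cocontinuous. The triangle identity then expresses every $x\in C$ as
\[
x\cong\colim_j\,\phi_j(x)\cdot c_j ,
\]
a small colimit, natural in $x$, over a small family $(c_j,\phi_j)$.

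The key step is to extract from this an actual retraction. Let $B$ be the small full subcategory of $C\times C\dual$ spanned by the $(c_j,\phi_j)$. Map $C\to\spsh(B^\op\times\ldots)$ using the functionals $\phi_j$, and map back using colimits weighted by the $c_j$. Concretely, define
\[
r\co\spsh(B)\to C\qquad\text{and}\qquad s\co C\to\spsh(B)
\]
by letting $s(x)$ be the presheaf $j\mapsto\phi_j(x)$ together with its transition maps, and by letting $r$ extend $j\mapsto c_j$ cocontinuously. Then the displayed formula says $rs\cong\id_C$.

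\textbf{Main obstacle.} The delicate step is making $s$ well defined: the diagram presenting $\eta(1)$ lives in $\spsh(C\times C\dual)$ only up to the localisation. I would first show, using \cref{lem:ColimitPresentationForCOC}-style presentations, that $\eta(1)$ is the image of a small presheaf on some small category $B$ equipped with a functor $B\to C\times C\dual$. One also needs the small-set conventions so that the $\phi_j$ land in small sets. Once that is in place, $r$ and $s$ are the cocontinuous functors induced by the two projections $B\to C$ and $B\to C\dual$, and the triangle identity gives $rs\cong\id_C$.
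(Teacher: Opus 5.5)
Your first direction is the paper's argument: dual $\spsh(A^\op)$, then \cref{thm:DualizableEqualNuclear} and \cref{lem:Rectractdualizable}. The first step of your converse also matches the paper: a small diagram $(c_\bullet,\phi_\bullet)\co J\to C\times C\dual$ gives $x\cong\colim_{j\in J}\phi_j(x)\cdot c_j$, naturally in $x$.

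The gap is a variance error in the construction of $r$ and $s$. Since $j\mapsto\phi_j$ is a functor $J\to C\dual$, the assignment $j\mapsto\phi_j(x)$ is \emph{covariant} in $j$. It is therefore an object of $\Set^J$, not a presheaf on $J$ (nor on $B$). The same happens in your last paragraph: the projection $B\to C\dual\simeq\inthom\COC C\Set$ transposes to a functor $C\to\Set^B$. On the other side, the cocontinuous extension $r$ of $c_\bullet$ is defined on presheaves and computes $r(P)\cong\int^{j}P(j)\cdot c_j$ with $P$ contravariant. Your displayed formula is instead a \emph{conical} colimit over $J$ of a diagram covariant in both factors. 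So $rs$ is not even well typed, and the formula does not say $rs\cong\id_C$.

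Taking $B$ to be the full image of the diagram is a second, smaller error: it identifies indices and adds morphisms, which changes the colimit.

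The obstacle you single out is not a real one. The localisation $\spsh(C\times C\dual)\to C\otimes C\dual$ is cocontinuous and essentially surjective, and every object of its source is a small colimit of representables; this produces $J$ at once. The $\phi_j$ are small-valued simply because $C\dual\simeq\inthom\COC C\Set$. The paragraph on atoms can be dropped: nothing shows the $c_j$ are atoms, and the argument never needs them.

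The missing idea is the paper's use of the density formula to turn the conical colimit into a genuinely weighted one. Substituting $c_j\cong\int^{k\in J}J(k,j)\cdot c_k$ gives
\[
x\;\cong\;\int^{k\in J}\beta^k_x\cdot c_k ,
\qquad
\beta^k_x=\colim_{j}\big(\phi_j(x)\times J(k,j)\big).
\]
Now $\beta_x$ is contravariant in $k$ and cocontinuous in $x$. So $S(x)=\beta_x=\colim_j\phi_j(x)\cdot\yon_J(j)$ defines a cocontinuous $S\co C\to\spsh(J)$. With $R$ the cocontinuous extension of $c_\bullet$, one gets $RS(x)\cong\colim_j\phi_j(x)\cdot c_j\cong x$.

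Alternatively, you can keep your covariant $s\co C\to\Set^J\simeq\spsh(J^\op)$. The retraction must then be $P\mapsto\colim_{j}P(j)\cdot c_j$, not the cocontinuous extension of $c_\bullet$. This map is cocontinuous in $P$ because colimits commute with colimits.
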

\begin{proof}
We first note that, for all small categories $A$, the presheaf category 
$\PSh A$ is dualizable, with dual $\PSh {A^\op}$.  Indeed, this equivalently
corresponds to the compact closed structure of the bicategory of small
categories, profunctors, and natural transformations,
see~\eg~\cite{DayB:monbha}.  Thus, by \cref{thm:DualizableEqualNuclear} and
\cref{lem:Rectractdualizable}, deformation retracts of presheaf categories
over small categories are dualizable.
 
Reciprocally, let $C$ be a dualizable cocomplete category with dual $C\dual$.
By \cref{prop:DualInSMCBiCs}, $C\dual \simeq \inthom\COC C\Set$ and 
$C\dual\otimes C \simeq \inthom\COC CC$.
Using the construction of the tensor product of cocomplete categories recalled
in \cref{subsec:CocompleteCategories}, we have a localisation
\[
\begin{tikzcd}
	\spsh{( C\dual \!\times\! C )} \ar[r] &
	C\dual \otimes C \simeq \inthom\COC CC \mathrlap{.}
\end{tikzcd}
\]
Since localisations are essentially surjective functors, there is a small
category $J$ and a diagram 
$(\gamma_\bullet, c_\bullet):J\to C\dual \times C$ such that 
$\colim_j (\gamma_j\otimes c_j)$ in $C\dual \otimes C$ is equivalently $\Id_C$
in $\inthom\COC C C$.  This means that, for every object $c$ in $C$, we have a
decomposition
\[\textstyle
\colim_j ( \gamma_j(c) \tensor c_j ) \cong c	
\mathrlap{,}
\]
where $\gamma_j(c)$ is in $\Set$ and $\gamma_j(c)\tensor c_j$ is the tensor of
$C$ over $\Set$.  We are now going to use a different decomposition of
$\Id_C$ in terms of a coend over $J$.  By the Density Formula, we have 
$c_j \cong \int^{k\in J} \hom J kj \tensor c_k$ and thus 
\begin{equation}
\label{eq:trick-coend}
c 
\ \cong \
\colim_j
  \left(
    \gamma_j(c)\tensor \left(\int^{k\in J} \hom J kj \tensor c_k \right)
  \right)
\ \cong \
\int^{k\in J}
  \big(
  \underbrace{\colim_j \big( \gamma_j(c)\times \hom J kj \big)}_{\beta^k_c}
  \big)
  \tensor 
  c_k 
\mathrlap{.}
\end{equation}
For $k$ in $J$ and $c$ in $C$, the small sets 
$\beta^k_c = \colim_j \big(\gamma_j(c) \times  \hom J kj\big)$ assemble into a
cocontinuous functor $S \co C \to \PSh J$.  Then, for $R \co \PSh J \to C$ the
cocontinuous extension of the functor $c_\bullet \co J \longrightarrow C$, the
formula~\cref{eq:trick-coend} establishes that $\Id_C\cong RS$.  This shows
that $C$ is a cocontinuous deformation retract of a presheaf category over a
small category.
\end{proof}

\section{Symmetric 2-rigs and coexponentiability}
\label{sec:sym2rig}

We study the fundamental mathematical structure of the paper: symmetric
2-rigs.  These are categories with \emph{multiplicative} symmetric monoidal
structure distributing over \emph{additive} small colimit structure.  It is
illuminating to look at these both in isolation and in combination.  The
additive structure was the subject of \cref{subsec:CocompleteCategories}.
Herein, the multiplicative structure is considered in \cref{subsec:smoncat}
and, thereafter, their combination is analyzed in \cref{sec:2-rigs}.

\subsection*{Symmetric monoidal categories} 
\label{subsec:smoncat}

We write $\SMONCAT$ for the large bicategory of symmetric monoidal categories,
symmetric strong monoidal functors, and monoidal transformations.  

A symmetric monoidal category can be equivalently defined as a pseudo
symmetric monoid in $(\CAT,\times,\terminalcat)$ and the forgetful
pseudofunctor $\sfU \co \SMONCAT \to \CAT$ can be biequivalently regarded as
the associated forgetful pseudofunctor $\PsSMon_\times(\CAT) \to \CAT$,
mapping a pseudo symmetric monoid to its underlying category.  The following
result follows from~\cite{BlackwellR:twodmt}. 

\begin{prop} \label{prop:smoncat-monadic} 
There is a pseudomonadic biadjunction
\begin{equation} \label{eqn:CAT-SMCAT-adjunction}
\begin{tikzcd}
\CAT \ar[r, shift left = 2, "\freesmc"] 
\ar[r, phantom, description, "\scriptstyle{\vdash}" rotate=90] & 
\ar[l, shift left = 2, "\sfU"] \SMONCAT \mathrlap{.} 
\end{tikzcd}
\end{equation}
Moreover, the induced monad $\sfU\freesmc$ preserves filtered colimits.
\end{prop}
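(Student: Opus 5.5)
We obtain both claims of \cref{prop:smoncat-monadic} from the general theory of two-dimensional monad theory applied to the pseudomonad for symmetric monoidal categories on $\CAT$. Following~\cite{BlackwellR:twodmt}, we consider the 2-monad $T$ on $\CAT$ whose strict algebras are symmetric strict monoidal categories. Its value on a category $A$ is
\[
TA \;=\; \textstyle\coprod_{n\in\Nat} \Sigma_n \wr A^n ,
\]
where $\Sigma_n \wr A^n$ has objects the $n$-tuples of objects of $A$, and its maps are a permutation together with componentwise maps of $A$. Symmetric monoidal categories are the pseudo-algebras for $T$; equivalently, by coherence, they are the pseudo symmetric monoids in $(\CAT,\times,\terminalcat)$. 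Symmetric strong monoidal functors are the pseudo-morphisms, and monoidal transformations are the algebra 2-cells.

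\emph{Step 1: pseudomonadicity.} By the coherence and strictification theorems of~\cite{BlackwellR:twodmt} (see also Lack's coherence for pseudo-algebras), the inclusion of strict algebras with pseudo-morphisms into pseudo-algebras with pseudo-morphisms is a biequivalence. We then apply the biadjunction $\freesmc \dashv \sfU$ with $\freesmc A = TA$: the free pseudo-algebra on $A$ is $TA$, equipped with concatenation of tuples as tensor and block permutations as symmetry. The induced pseudomonad $\sfU\freesmc$ is biequivalent to $T$. The comparison pseudofunctor from $\SMONCAT$ to $\mathrm{Ps}\text{-}T\text{-}\mathrm{Alg}$ is then a biequivalence by construction, which is exactly pseudomonadicity. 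We will only sketch this bookkeeping and cite the reference for it.

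\emph{Step 2: preservation of filtered colimits.} This reduces to showing that $T$ preserves (bi)colimits of filtered diagrams in $\CAT$. The coproduct $\coprod_n$ commutes with all colimits. The finite power $(-)^n$ commutes with filtered colimits in $\CAT$, because filtered colimits commute with finite limits there: objects and morphisms are computed as filtered colimits of sets. Taking the wreath product with the fixed finite group $\Sigma_n$ is a colimit-type construction (a lax quotient, or equivalently a Grothendieck construction over $B\Sigma_n$), so it commutes with colimits as well.

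\emph{Main obstacle.} The delicate step is passing from strict filtered colimits to filtered bicolimits. We would handle it by noting that $\kappa$-filtered bicolimits of categories can be computed as strict filtered colimits of a pseudofunctor strictified to an honest diagram, since filtered bicolimits in $\CAT$ are equivalent to the strict colimits of a suitable replacement. We then use that $T$ is a strict 2-monad preserving strict filtered colimits. We also need to check the size conventions, namely that $\lambda$-small $\kappa$-filtered diagrams are covered; this is unproblematic because finite powers commute with $\kappa$-filtered colimits for $\kappa>\aleph_0$.
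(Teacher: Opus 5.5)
Your proposal is correct and follows essentially the paper's route. The paper obtains pseudomonadicity directly from Blackwell--Kelly--Power, viewing $\SMONCAT$ as pseudo-algebras (pseudo symmetric monoids) over $\CAT$. The filtered-colimit claim rests on the same explicit formula $\freesmc A=\coprod_{n\in\Nat}\bicolim_{\sigma\in\mathfrak S_n}A^{\sigma}$ (your $\coprod_n\Sigma_n\wr A^n$) that you use. Your strictification step in Step 1 is harmless but not needed: pseudomonadicity only requires identifying $\SMONCAT$ with the pseudo-algebras.
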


Free symmetric monoidal categories have a simple description.  Let 
$\mathfrak S = \coprod_{n\in\mathbb N} \mathfrak S_n$  be the groupoid of
finite symmetric groups.  Then, every category $\catA$ induces a
$\mathfrak S$-indexed category $\catA^{(-)}:\mathfrak S \to \CAT$ sending $n$
to $\catA^{n}$ and the category $\freesmc\catA$ arises as its Grothendieck
construction.  For instance, $\freesmc(\terminalcat) \cong \mathfrak S$.  In
abstract terms, 
\[\textstyle
\freesmc\catA
\ =\ 
\bicolim_{\sigma\in\mathfrak S}\ \catA^{\sigma}
\ =\ 
\coprod_{n\in\Nat}\ \bicolim_{\sigma\in\mathfrak S_n}\catA^{\sigma}
\mathrlap{,} 
\]
where each summand is the bicategorical quotient of the $n$-fold product by
the action of $\mathfrak S_n$.

As discussed in~\cite{HylandM:psecmpc}, see also~\cite{BourkeJ:skes2c}, the
pseudomonad $\sfU\freesmc \co \CAT \to \CAT$, whose algebras are symmetric
monoidal categories, is pseudocommutative; and, thereby, $\SMONCAT$ comes
equipped with a symmetric monoidal closed structure 
$\big(\otimes,\mathfrak S,\inthom\SMCAT-=\big)$.  
For permutative~(\viz~symmetric strict monoidal) categories, this monoidal
structure has been defined and studied in detail in~\cite{GurskiN:symmbp}.  

\begin{prop} \leavevmode
\begin{enumerate}
\item
Finite coproducts and finite products in $\SMONCAT$ coincide. 

\item
For categories $A$ and $B$,
\[
  \freesmc(\initialcat) \simeq \terminalcat
  \mathrlap,\quad
  \freesmc(A+B) \simeq \freesmc(A)\times\freesmc(B)
  \mathrlap,\quad
  \freesmc(A)\otimes \freesmc(B) \simeq \freesmc(A \times B)
  \mathrlap{.} 
\]
\end{enumerate}
\end{prop}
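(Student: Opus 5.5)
For part (i) I plan to show that in $\SMONCAT$ the cartesian product $X\times Y$ of symmetric monoidal categories, with componentwise structure, is also a coproduct, and that $\terminalcat$ is a zero object. The injections are $\iota_1=(\id,\mathrm{I})\co X\to X\times Y$, $x\mapsto(x,\mathrm{I})$, and $\iota_2=(\mathrm{I},\id)$. Each is symmetric strong monoidal via the unitors $\mathrm{I}\otimes\mathrm{I}\cong\mathrm{I}$. Given symmetric strong monoidal $f\co X\to Z$ and $g\co Y\to Z$, I will define the copairing as $[f,g](x,y)\defeq f(x)\otimes g(y)$. Its monoidal constraint is
\[
f(x)\otimes g(y)\otimes f(x')\otimes g(y')\cong f(x)\otimes f(x')\otimes g(y)\otimes g(y')\cong f(x\otimes x')\otimes g(y\otimes y')\mathrlap{,}
\]
which uses the symmetry of $Z$; symmetry is exactly what makes this work. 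I also get $[f,g]\iota_1\cong f$ via $g(\mathrm{I})\cong\mathrm{I}$.

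To conclude that restriction $\hom\SMONCAT{X\times Y}Z\to\hom\SMONCAT XZ\times\hom\SMONCAT YZ$ is an equivalence, I need essential surjectivity (done above) and full faithfulness. For full faithfulness, note that any strong monoidal $h\co X\times Y\to Z$ satisfies $h(x,y)\cong h(x,\mathrm{I})\otimes h(\mathrm{I},y)$, naturally and monoidally. Hence a monoidal transformation $h\Rightarrow h'$ is determined by its restrictions along $\iota_1,\iota_2$, and every pair of monoidal transformations glues uniquely. The terminal category is terminal, and it is also initial because the unit functor $\terminalcat\to Z$ is the essentially unique strong monoidal functor. Together with the biproduct structure, this gives coincidence of all finite coproducts and products. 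The main obstacle is checking the coherence of the copairing constraint, meaning the hexagon and symmetry axioms for $[f,g]$. I would handle this by appeal to the coherence theorem for symmetric monoidal categories rather than by diagram chasing.

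For part (ii), I will use the biadjunction $\freesmc\dashv\sfU$ of \cref{prop:smoncat-monadic}: left biadjoints preserve bicolimits. The first two equivalences follow because $\initialcat$ and $A+B$ are bicoproducts in $\CAT$. Their images are the initial object $\terminalcat$ and the coproduct $\freesmc A\times\freesmc B$ of $\SMONCAT$, by (i). This can also be checked concretely from the Grothendieck construction, since a word in $A+B$ up to permutation is a pair of words.

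For the tensor formula I will mirror \cref{prop:TensorOfPresheafCats} and use the universal property of $\otimes$ in $\SMONCAT$ as the classifier of bilinear functors, i.e. functors strong monoidal in each variable separately. The chain of equivalences is
\[
\hom\SMONCAT{\freesmc(A\times B)}{Z}\simeq\hom\CAT{A\times B}{\sfU Z}\simeq\Hom\CAT A{\hom\CAT B{\sfU Z}}\simeq\Hom\CAT A{\sfU\,\inthom\SMCAT{\freesmc B}Z}\mathrlap{.}
\]
This last category is $\Hom\SMONCAT{\freesmc A}{\inthom\SMCAT{\freesmc B}Z}\simeq\hom\SMONCAT{\freesmc A\otimes\freesmc B}Z$. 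The step $\hom\CAT B{\sfU Z}\simeq\sfU\,\inthom\SMCAT{\freesmc B}Z$ requires that the internal hom has as underlying category the category of strong monoidal functors, which holds by the construction from pseudocommutativity. Finally, pseudonaturality in $Z$ and the bicategorical Yoneda lemma give the equivalence.
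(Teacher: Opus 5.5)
Your proposal is correct. The paper states this proposition without proof, so there is nothing to compare step by step, but your route is the natural one and matches how the paper handles the analogous facts. Your tensor formula is the same representability chain the paper uses to prove \cref{prop:TensorOfPresheafCats}. Your derivation of $\freesmc(\initialcat)\simeq\terminalcat$ and $\freesmc(A+B)\simeq\freesmc(A)\times\freesmc(B)$ combines the left biadjointness of $\freesmc$ with part (i), which is how the paper implicitly relies on these facts in \cref{prop:coprod}.

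One simplification for part (i) removes the coherence check you flag as the main obstacle. Write the copairing as $[f,g]=\otimes_Z\circ(f\times g)$. For symmetric $Z$, the tensor $\otimes_Z\colon Z\times Z\to Z$ is itself a symmetric strong monoidal functor, with the middle-four interchange as its constraint. So $[f,g]$ is a composite of symmetric strong monoidal functors, and no separate hexagon or symmetry verification is needed.
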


\subsection*{Symmetric 2-rigs} 
\label{sec:2-rigs}

\begin{defn} 
The bicategory $\RIG$ ($\Rig$) has:
\begin{itemize}
\item 
  objects given by (presentable) \emph{symmetric 2-rigs},~\ie~cocomplete
  symmetric monoidal (presentable) categories in which the tensor product is
  separately cocontinuous,

\item 
  morphisms given by symmetric strong monoidal cocontinuous functors, and

\item 
  2-cells given by symmetric monoidal natural transformations.  
\end{itemize}
\end{defn} 
\noindent
All 2-rigs considered in the paper are symmetric.  
Therefore, we shall henceforth simply speak of \mbox{2-rigs} rather than of
symmetric 2-rigs. 

Write $\mylift\sfU\co\RIG\to\COC$ and $\mylift\sfUin\co\RIG\to\SMCAT$ for the
forgetful pseudofunctors.

The category of presheaves $\spshM$ over a symmetric monoidal category
$\moncatM$ is equipped with a symmetric monoidal structure, known as
\emph{convolution}~\cite{DayB:clocf}, 
for which the Yoneda embedding 
$\yon_\moncatM \co \moncatM \embto \spsh\moncatM$ is symmetric strong monoidal
and has the following universal property~\cite{ImG:unipcm}:
\begin{align*}
  (-)\circ\yon_M
  \co  
  \RIG(\spsh M,R) \xrightarrow{\enspace\simeq\enspace} 
  \SMONCAT(M,R)
  \qquad
  (R\in\RIG)
  \mathrlap{.}
\end{align*}
Thus, there is a biadjunction 
\begin{equation} \label{eqn:SMCAT-RIG-adjunction}
\begin{tikzcd}
\SMCAT \ar[r, shift left = 2, "\mylift\spsh"] 
\ar[r, phantom, description, "\scriptstyle{\vdash}" rotate=90] & 
\ar[l, shift left = 2, "\mylift\sfUin"] \RIG 
\end{tikzcd}
\end{equation}
whose induced pseudomonad $\mylift\sfUin\mylift\spsh$ on $\SMCAT$ is a lifting
as follows
\begin{equation} \label{diag:pseudolifting}
\begin{tikzcd}
    \CAT \ar[d,"\sfUin\spsh"'] 
    & \SMCAT \ar[d,"\mylift\sfUin\mylift\spsh"] 
    \ar[l,"\sfU"'] 
    \\
    \CAT & \SMCAT
    \ar[l,"\sfU"] 
\end{tikzcd}
\end{equation}
of the pseudomonad $\sfUin\spsh$ on $\CAT$ induced by the
biadjunction~\cref{display:CAT-COC-adjunction} whose algebras are,
equivalently, cocomplete categories.

A $\mylift\sfUin\mylift\spsh$-pseudoalgebra 
$a \co (\spsh M,\mylift\otimes,\yon\one) \to (M,\otimes,\one)$ has an
underlying $\sfUin\spsh$-pseudoalgebra $a \co \spsh M \to M$, and the category
$M$ is therefore cocomplete.  In particular, the diagram
\[\begin{tikzcd}
  M^I \ar[r,"\colim"] \ar[d,"(\yon_M)^I"'] & M 
  \\
  (\spsh M)^I \ar[r,"\colim"'] & \spsh M \ar[u,"a"']
\end{tikzcd}\]
commutes up to canonical isomorphism for every small category $I$.  Moreover,
for all $X\in M^I$ and $Y\in M$, the canonical isomorphisms
\begin{align*} \textstyle 
(\colim_i X_i) \otimes Y
&\textstyle 
\cong a(\colim_i\, \yon X_i) \otimes a(\yon Y)
\\ 
&\textstyle 
\cong a\big((\colim_i\, \yon X_i) \ \mylift\otimes\ \yon Y)\big)
\\ 
& \textstyle 
\cong a\Big(\colim_i \big(\yon(X_i) \ \mylift\otimes\ \yon(Y)\big)\Big)
\\ 
& \textstyle 
\cong a\big(\colim_i\, \yon(X_i\otimes Y)\big)
\\ 
& \textstyle 
\cong \colim_i\, (X_i\otimes Y)
\end{align*}
establish that the symmetric monoidal category $M$ has a separately
cocontinuous tensor product.  It follows that every
$\mylift\sfUin\mylift\spsh$-pseudoalgebra is a 2-rig.  In fact, this may be
strengthen as follows.

\begin{prop}
The biadjunction~\cref{eqn:SMCAT-RIG-adjunction} is pseudomonadic.
\end{prop}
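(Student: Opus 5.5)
We must show that the comparison pseudofunctor from $\RIG$ to the bicategory of $\mylift\sfUin\mylift\spsh$-pseudoalgebras is a biequivalence. The preceding discussion already shows that every $\mylift\sfUin\mylift\spsh$-pseudoalgebra $a\co\spsh M\to M$ has an underlying cocomplete category with separately cocontinuous tensor, that is, it is a 2-rig. The plan is to prove that the comparison is essentially surjective on objects and an equivalence on hom-categories. We do this by reducing to the known pseudomonadicity of $\sfUin\spsh$ over $\CAT$ (\cref{prop:coc-monadic}) and of $\sfU\freesmc$ (\cref{prop:smoncat-monadic}), using the lifting \cref{diag:pseudolifting}.

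\textbf{Objects.} Given a 2-rig $R$, its pseudoalgebra structure is the cocontinuous symmetric strong monoidal extension $\spsh(\mylift\sfUin R)\to R$ of the identity. This is the counit of \cref{eqn:SMCAT-RIG-adjunction}. Its underlying functor is the $\sfUin\spsh$-algebra structure of the cocomplete category $R$, by \cref{diag:pseudolifting}.

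Conversely, take a pseudoalgebra $a$ on $M$. By \cref{prop:coc-monadic}, its underlying $\sfUin\spsh$-algebra is equivalent to the canonical one on the cocomplete category $M$, so $a$ is, up to isomorphism, the colimit functor $\spsh M\to M$. The extra data of $a$ being symmetric strong monoidal is then exactly the compatibility computed in the displayed chain of isomorphisms. From this we conclude that $M$ is a 2-rig whose canonical pseudoalgebra structure is equivalent to $a$. This gives essential surjectivity.

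\textbf{Hom-categories.} A pseudomorphism of pseudoalgebras $(f,\phi)\co(M,a)\to(N,b)$ consists of a symmetric strong monoidal functor $f$ together with an invertible monoidal 2-cell $\phi\co b\circ\spsh f\Rightarrow f\circ a$ satisfying the algebra axioms. Forgetting the monoidal structure, \cref{prop:coc-monadic} says that such a pair amounts to $f$ being cocontinuous, with $\phi$ the essentially unique canonical colimit comparison. It then remains to check two things.
\begin{enumerate}
\item When $f$ is strong monoidal and cocontinuous, the canonical $\phi$ is automatically monoidal. This follows because $\phi$ is determined by its restriction along $\yon$, by the universal property of Day convolution, where it is the identity.
\item Algebra 2-cells are exactly the monoidal natural transformations; the compatibility condition with $\phi$ is automatic, just as in the $\COC$ case.
\end{enumerate}
Together these give the required equivalence $\RIG(M,N)\simeq\mathrm{Ps}\text{-}\mathrm{Alg}(M,N)$.

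\textbf{Main obstacle.} The hard step is the object-level argument: showing that the monoidal coherence data of an arbitrary pseudoalgebra $a$ agree, under the equivalence with the colimit functor, with the structure induced by the 2-rig $M$. This requires handling coherence of the convolution structure carefully. To avoid doing it by hand, I would instead apply the bicategorical Beck theorem \cite[Theorem~3.5]{CreurerI:bectpm} directly. That route needs two inputs.
\begin{enumerate}
\item Conservativity of $\mylift\sfUin$, which is clear.
\item Creation of $\mylift\sfUin$-split codescent objects. These are created by $\sfU\circ\mylift\sfUin$ because $\sfUin$ creates them by \cref{prop:coc-monadic} and $\sfU$ creates them by \cref{prop:smoncat-monadic}. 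The monoidal structure then transfers because the tensor of a 2-rig preserves split colimits separately in each variable.
\end{enumerate}
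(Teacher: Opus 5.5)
Your direct route is correct and is, in substance, the paper's own argument. The paper gives no proof beyond the discussion preceding the statement. There, the lifting \cref{diag:pseudolifting} shows that a pseudoalgebra $a\co\spsh M\to M$ has an underlying $\sfUin\spsh$-algebra, so $M$ is cocomplete and $a$ is the colimit functor. The displayed chain of isomorphisms then shows the tensor is separately cocontinuous, and pseudomonadicity is simply asserted as a strengthening. Your analysis of pseudomorphisms and algebra 2-cells supplies what the paper leaves implicit, and it is right.

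The ``main obstacle'' you flag is not one; it falls to the same argument as your step (i). Once $M$ is a 2-rig and $a$ is cocontinuous, both $a$ and the counit $\varepsilon_M\co\spsh M\to M$ of \cref{eqn:SMCAT-RIG-adjunction} are morphisms of $\RIG$. Their restrictions along $\yon_M$ are monoidally isomorphic to $\id_M$, because the unit 2-cell $a\yon_M\cong\id_M$ of the pseudoalgebra is a 2-cell of $\SMCAT$ and hence monoidal. So $\hom\RIG{\spsh M}{M}\simeq\hom\SMCAT{M}{M}$ gives a monoidal isomorphism $a\cong\varepsilon_M$. Compatibility with the multiplication 2-cells is forced in the same way, by uniqueness of 2-cells between 2-rig morphisms out of $\spsh\spsh M$. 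Equivalently, the lifted pseudomonad is lax-idempotent, so pseudoalgebra structure is property-like. No detour through Beck's theorem is needed.

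If you do take the Beck route, your justification of the second input does not work as written. The remark that the tensor of a 2-rig preserves split colimits separately is vacuous, since split codescent objects are absolute and preserved by every pseudofunctor. Also, creation by $\sfU\circ\mylift\sfUin$ does not follow formally from creation by $\sfUin$ and by $\sfU$ separately. You must still show that the cocomplete and monoidal structures they induce on the codescent object $C$ combine into a 2-rig, and that $C$ has the universal property in $\RIG$.

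What makes this work is that the codescent map $p\co C_0\to C$ is simultaneously cocontinuous and strong monoidal, and has a section $s$ with $ps\cong\id$. Hence $(\colim_i X_i)\otimes Y\cong p\big((\colim_i sX_i)\otimes sY\big)\cong p\big(\colim_i(sX_i\otimes sY)\big)\cong\colim_i(X_i\otimes Y)$. Moreover, a strong monoidal $g\co C\to R$ is cocontinuous if and only if $gp$ is, so the universal property in $\SMCAT$ restricts to one in $\RIG$.
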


Recalling that the biadjunction~\cref{eqn:CAT-SMCAT-adjunction}, inducing the
monad $\sfU\freesmc$ on $\CAT$, is pseudomonadic, by general lifting
results~\cite{MarmolejoF:dislp,ChengE:psedl,TanakaM:psedl,FioreM:relpkbs}, 
the lifting~\cref{diag:pseudolifting} corresponds to a pseudodistributive law 
$\sfU\freesmc\ \sfUin\spsh \to   \sfUin\spsh\ \sfU\freesmc$, for which the
reader may consult~\cite{FioreM:carcbg}.
This establishes the following.
\begin{prop} \label{prop:pseudomonadicitysquare}
There is a commutative square of pseudomonadic forgetful pseudofunctors as
follows 
\begin{equation}\label{diag:monadicitysquare}
\begin{tikzcd}[column sep = large] 
\CAT
\ar[rr, "\freesmc", shift left = 3, dashed]
\ar[rr, phantom, description, "\scriptstyle{\vdash}" rotate=90, shift left = 1] 
\ar[dd, phantom, description, "\scriptstyle{\dashv}"] 
\ar[dd, "\spsh"', shift right = 2, dashed]
\ar[rrdd, "\mylift\spsh\freesmc"', shift right = 2, dashed]
\ar[rrdd, phantom, description, "\scriptstyle{\dashv}",shift right=0.25] 
&&
\SMONCAT
\ar[ll, shift left = 1,"\sfU"]
\ar[dd, "\mylift\spsh"', shift right = 2, dashed]
\ar[dd, phantom, description, "\scriptstyle{\dashv}"] 
\\
\\
\COC 
\ar[uu, shift right = 2,"\sfUin"']
&& \RIG 
\ar[lluu, shift right = 2,"\sfU\mylift\sfUin"', shorten > = 3 ex]
\ar[uu, shift right = 2,"\mylift\sfUin"']
\ar[ll,shift left = 2,"\mylift\sfU"]
\end{tikzcd}
\end{equation}
with the pseudomonad $\sfU\mylift\sfUin\mylift\spsh\freesmc$ equivalently
described by the pseudomonad $\sfUin\spsh\sfU\freesmc$.  
\end{prop}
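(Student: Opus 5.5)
The square in \cref{diag:monadicitysquare} is assembled from three biadjunctions already available. These are \cref{prop:coc-monadic} on the left edge, \cref{prop:smoncat-monadic} on the top edge, and the pseudomonadic biadjunction \cref{eqn:SMCAT-RIG-adjunction} on the right edge. The plan is first to check that the square of right biadjoints commutes, i.e.\ $\sfUin\mylift\sfU\simeq\sfU\mylift\sfUin\co\RIG\to\CAT$. This holds because both composites send a 2-rig to its underlying category. The left biadjoints $\freesmc$ and $\spsh$ then compose with $\mylift\spsh$, and composing the biadjunctions $\freesmc\dashv\sfU$ and $\mylift\spsh\dashv\mylift\sfUin$ yields the diagonal biadjunction $\mylift\spsh\freesmc\dashv\sfU\mylift\sfUin$. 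What remains to be shown is (a) the identification of the diagonal pseudomonad, (b) pseudomonadicity of the diagonal, and (c) pseudomonadicity of the bottom edge $\mylift\sfU\co\RIG\to\COC$.

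For (a), I use the lifting \cref{diag:pseudolifting}. The pseudomonad $\mylift\sfUin\mylift\spsh$ on $\SMCAT$ lifts $\sfUin\spsh$ along $\sfU$, so $\sfU\mylift\sfUin\mylift\spsh\freesmc\simeq\sfUin\spsh\,\sfU\freesmc$. This exhibits the composite pseudomonad induced by the pseudodistributive law $\sfU\freesmc\,\sfUin\spsh\to\sfUin\spsh\,\sfU\freesmc$. Concretely, the convolution structure on $\spsh(\freesmc A)$ has $\spsh(\freesmc A)$ as underlying category, which gives the identification directly.

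For (b), I invoke the general theory of pseudodistributive laws (as in \cite{MarmolejoF:dislp,ChengE:psedl,TanakaM:psedl}). Pseudoalgebras for the composite pseudomonad $\sfUin\spsh\,\sfU\freesmc$ are equivalently $\mylift\sfUin\mylift\spsh$-pseudoalgebras on $\sfU\freesmc$-pseudoalgebras. By pseudomonadicity of $\sfU$ and of $\mylift\sfUin$, these are objects of $\RIG$, and the comparison pseudofunctor is a biequivalence. Composites of pseudomonadic functors are not pseudomonadic in general, so the distributive law is what makes this step work. For (c), the same distributive law gives a lifting of $\sfU\freesmc$ to $\COC$, namely $\mylift\Sym$, whose pseudoalgebras are cocomplete categories with a separately cocontinuous symmetric monoidal structure, i.e.\ 2-rigs. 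So $\mylift\sfU$ has a left biadjoint and is pseudomonadic. Alternatively, this follows from a pseudo-Beck criterion, because $\mylift\sfU$ reflects equivalences and creates codescent objects of $\mylift\sfU$-split pairs: such objects are computed in $\CAT$, and they are preserved by the separately cocontinuous tensor.

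I expect the main obstacle to be making (b) and (c) precise bicategorically. The task is to verify that the pseudoalgebras for the composite pseudomonad really are the same as 2-rigs, including the 1- and 2-cells. The approach I would try first is to cite the pseudodistributive-law correspondence between liftings and composite pseudomonads, and then reduce the statement to the explicit check already carried out in the text. That check showed that a $\mylift\sfUin\mylift\spsh$-pseudoalgebra is exactly a 2-rig, with strong monoidal cocontinuous morphisms.
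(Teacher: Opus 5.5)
Your steps (a) and (b) follow the paper's argument. The lifting~\cref{diag:pseudolifting} of $\sfUin\spsh$ along the pseudomonadic $\sfU$ corresponds, by the general lifting results, to a pseudodistributive law $\sfU\freesmc\,\sfUin\spsh\to\sfUin\spsh\,\sfU\freesmc$. Its composite pseudomonad is the one induced by the diagonal biadjunction $\mylift\spsh\freesmc\dashv\sfU\mylift\sfUin$. Its pseudoalgebras are pseudoalgebras of the lifted pseudomonad on $\SMCAT$, hence 2-rigs by pseudomonadicity of $\mylift\sfUin$. That part is fine, including your remark that the distributive law is what makes the composite pseudomonadic.

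The problem is step (c).

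First, it is not needed here. In the diagram the bottom edge $\mylift\sfU$ carries no left biadjoint and enters only through the commutativity $\sfUin\mylift\sfU\simeq\sfU\mylift\sfUin$. Its pseudomonadicity is the content of the next result, \cref{thm:sym-alg-coc}.

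Second, and more seriously, your main argument for (c) is wrong. Write $S=\sfU\freesmc$ and $T=\sfUin\spsh$. A law $ST\to TS$ lifts $T$ to $S$-pseudoalgebras (this is \cref{diag:pseudolifting}) and extends $S$ to the Kleisli bicategory of $T$. It does \emph{not} lift $S$ to $T$-pseudoalgebras; that would require a law $TS\to ST$. Indeed $\mylift\sfU\mylift\Sym$ is not a lifting of $\sfU\freesmc$ along $\sfUin$. For example, $\sfUin\mylift\sfU\mylift\Sym(\Set)\simeq\spsh(\mathfrak S)$ is the category of species. By contrast, $\sfU\freesmc\sfUin(\Set)$, the free symmetric monoidal category on the category $\Set$, has morphisms only between lists of equal length and so is not even cocomplete.

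On the $S$ side, the law only gives the Kleisli extension $\spsh A\mapsto\spsh(\freesmc A)$ on presheaf categories. The pseudomonad $\mylift\sfU\mylift\Sym$ is the free symmetric pseudomonoid pseudomonad for $(\COC,\otimes,\Set)$. It agrees with that extension on presheaf categories, but it has to be constructed separately on all of $\COC$.

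Your fallback Beck-style argument is the right one. It is what the paper uses for \cref{thm:sym-alg-coc}: pseudoconservativity of $\mylift\sfU$ together with pseudomonadicity of $\sfUin$ and $\sfU\mylift\sfUin$. However, Beck's theorem takes the left biadjoint as input. That biadjoint must come from the explicit formula $\mylift\Sym C\simeq\oplus_n\bicolim_{\sigma\in\mathfrak S_n}C^{\otimes\sigma}$, which is valid because $\otimes$ on $\COC$ is separately cocontinuous, not from the distributive law. So either drop (c) or replace its first argument with this.
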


Furthermore, the above picture completes as follows.
\begin{prop} 
\label{thm:sym-alg-coc}
\label{sec:2rig-pseudomonoids}
There is a pseudomonadic biadjunction
\begin{equation}
\label{diag:2rig-pseudomonoids}
\begin{tikzcd}
\COC \ar[r, shift left = 2, "\mylift\Sym"] 
\ar[r, phantom, description, "\scriptstyle{\vdash}" rotate=90] & \RIG
\ar[l, shift left = 2, "\mylift\sfU"] 
\mathrlap{.} 
\end{tikzcd}
\end{equation}
Moreover, this restricts to a pseudomonadic biadjunction 
$\mylift\Sym \co \Pres \rightleftarrows \Rig \co \mylift\sfU$.
\end{prop}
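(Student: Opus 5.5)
We construct $\mylift\Sym$ as the free commutative pseudomonoid construction in the symmetric monoidal bicategory $(\COC,\otimes,\Set)$ of \cref{prop:COC-SMC}. The first step is to observe that a 2-rig is, up to biequivalence, exactly a symmetric pseudomonoid in $\COC$. By the universal property of $\otimes$, a cocontinuous functor $C\otimes C\to C$ is the same as a separately cocontinuous functor $C\times C\to C$. A cocontinuous functor $\Set\to C$ is the same as an object of $C$. The coherence data transfer along these equivalences, and so do symmetric strong monoidal cocontinuous functors and monoidal transformations. This identifies $\mylift\sfU\co\RIG\to\COC$ with the forgetful pseudofunctor $\PsSMon_\otimes(\COC)\to\COC$.

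Next, for $C\in\COC$ we define
\[
\textstyle \mylift\Sym\,C \ \defeq\ \coprod_{n\in\Nat} \bicolim_{\sigma\in\mathfrak S_n} C^{\otimes\sigma} \mathrlap,
\]
the bicategorical $\mathfrak S_n$-coinvariants of the $n$-fold tensor powers. This mirrors the formula for $\freesmc$ in $\CAT$, and all the required bicolimits exist in $\COC$. The multiplication is induced by concatenation $C^{\otimes n}\otimes C^{\otimes m}\to C^{\otimes(n+m)}$, and the unit is the summand $C^{\otimes 0}=\Set$. To see that this gives a well-defined 2-rig, we use that $\otimes$ preserves bicolimits in each variable, which holds because $\COC$ is closed (\cref{prop:COC-SMC}). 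In particular $(\mylift\Sym C)\otimes(\mylift\Sym C)$ decomposes as a sum of coinvariants of $C^{\otimes(n+m)}$ under $\mathfrak S_n\times\mathfrak S_m$.

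The universal property then follows from two chains of equivalences, for a 2-rig $R$:
\[
\textstyle \RIG(\mylift\Sym C,R)\simeq \prod_n \Big(\COC(C^{\otimes n},\mylift\sfU R)\Big)^{\mathfrak S_n}_{\mathrm{compatible}}\simeq \COC(C,\mylift\sfU R) \mathrlap.
\]
The second equivalence holds because a symmetric strong monoidal map out of $\mylift\Sym C$ is determined by its restriction to the summand $C$. Its value on $C^{\otimes n}$ is forced to be $x_1\otimes\dots\otimes x_n\mapsto f(x_1)\otimes\dots\otimes f(x_n)$, made $\mathfrak S_n$-equivariant by the symmetry of $R$.

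An alternative to this direct verification is available. Since $\otimes$ is separately cocontinuous, the pseudomonad $\sfUin\spsh\sfU\freesmc$ of \cref{prop:pseudomonadicitysquare} is presented by the pseudomonad $\mylift\sfU\mylift\Sym$ on $\COC$, and biadjunctions compose. Concretely, $\mylift\Sym\,\spsh A\simeq\mylift\spsh\freesmc A$, by \cref{prop:TensorOfPresheafCats} and the formula for $\freesmc$. For a general $C$, we present it as a codescent object of presheaf categories (\cref{lem:ColimitPresentationForCOC}) and extend $\mylift\Sym$ by preservation of bicolimits. The left biadjoint $\mylift\spsh\freesmc$ exists by \cref{prop:pseudomonadicitysquare}, so it suffices to show that $\mylift\sfU\co\RIG\to\COC$ has a left biadjoint. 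Either the explicit construction or the alternative route supplies this.

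Pseudomonadicity is the step I expect to be the main obstacle. The plan is to apply the bicategorical Beck theorem (\cite{CreurerI:bectpm}): $\mylift\sfU$ reflects equivalences and creates $\mylift\sfU$-split codescent objects. Reflection of equivalences is clear, because an equivalence of underlying cocomplete categories carrying a symmetric strong monoidal structure is an equivalence in $\RIG$. For codescent objects, we would use that $\mylift\sfU\mylift\sfUin$ is already pseudomonadic over $\CAT$ (\cref{prop:pseudomonadicitysquare}) and that $\sfUin$ is pseudomonadic (\cref{prop:coc-monadic}). A $\mylift\sfU$-split codescent diagram is in particular $\sfUin\mylift\sfU$-split, since split diagrams are absolute. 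It therefore has a codescent object in $\RIG$ created by $\sfU\mylift\sfUin$. That object is preserved by $\mylift\sfU$, since $\sfUin$ creates the corresponding codescent object in $\COC$ from the $\CAT$ one. The delicate point is checking this cancellation argument for pseudomonadicity coherently at the bicategorical level.

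For the restriction to $\Pres$ and $\Rig$, we note that $\Pres$ is closed under $\otimes$ (\cref{prop:COC-SMC}), under small sums, and under bicolimits of small diagrams. Hence $\mylift\Sym$ maps $\Pres$ into $\Rig$. Both sub-bicategories are full, so the biadjunction and the Beck conditions restrict. The only extra input is that the codescent objects involved remain presentable, which follows from the closure of $\Pres$ under small bicolimits in $\COC$.
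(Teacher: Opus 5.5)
Your proposal is correct and follows essentially the paper's own argument. You identify 2-rigs with symmetric pseudomonoids in $(\COC,\otimes,\Set)$, build $\mylift\Sym C$ as the sum of the $\mathfrak S_n$-coinvariants of the tensor powers $C^{\otimes n}$ (using separate cocontinuity of $\otimes$), derive pseudomonadicity from Beck's theorem using that $\COC$ and $\RIG$ are pseudomonadic over $\CAT$ and that $\mylift\sfU$ is conservative, and handle the presentable case via \cref{prop:COC-SMC}. Your ``alternative route'' through codescent presentations is unnecessary, and as phrased its reduction to ``showing that $\mylift\sfU$ has a left biadjoint'' is circular, but your main line does not depend on it.
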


Indeed, recalling the symmetric monoidal structure on cocomplete categories
from \cref{sec:tensor-COC}, a 2-rig can be biequivalently defined as a
symmetric pseudomonoid in $(\COC,\otimes,\Set)$ and the forgetful
pseudofunctor $\RIG \to \COC$ can be biequivalently regarded as the associated
forgetful pseudofunctor $\PsSMon_\otimes(\COC) \to \COC$, mapping a symmetric
pseudomonoid to its underlying cocomplete category.  
Since the tensor product in $\COC$ is separately cocontinuous, the free
symmetric pseudomonoid on a cocomplete category $C$ has a standard direct
description, obtained as the bicategorical quotient by the action of the
symmetric groups on finite tensor powers; specifically, 
\[\textstyle
\mylift\Sym\cocC
\ =\ 
\oplus_{n\in\mathbb N}\ \mylift\Sym_n \cocC 
\mathrlap{,}\quad\mbox{where}\enspace
\mylift\Sym_n\cocC = \bicolim_{\sigma\in\mathfrak S_n}\cocC^{\otimes\sigma}
\mathrlap.
\]
Note that when~$\cocC$ is a presentable category, by~\cref{prop:COC-SMC}, so
are the~$\cocC^{\otimes n}$, the $\mylift\Sym_n\cocC$, and $\mylift\Sym\cocC$.

On the other hand, the pseudomonadicity of $\RIG$ over
$\COC$~\cref{diag:2rig-pseudomonoids} is established using Beck's theorem for
pseudomonads~\cite[Theorem~3.6]{CreurerI:bectpm} relying on the
pseudomonadicity of $\COC$ and $\RIG$ over $\CAT$~\cref{diag:monadicitysquare}
and the pseudoconservativity of $\mylift\sfU$.

\medskip
In analogy with the situation from commutative algebra~\cite{EisenbudCommAlg}
and topos theory~\cite{BungeM:symt}, we shall refer to $\mylift\Sym$ as the
\myemph{symmetric algebra} pseudofunctor.  Correspondingly, a 2-rig in its
essential image will be called a \myemph{symmetric-algebra 2-rig}.  
We record the following consequence of the
pseudomonadicity~\cite[Theorem~3.5]{CreurerI:bectpm}
of~\cref{diag:2rig-pseudomonoids}.

\begin{lem}
\label{lem:sym-colim}
Every 2-rig is a bicolimit of a codescent diagram of symmetric-algebra 2-rigs.
\end{lem}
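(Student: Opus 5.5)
The statement is the 2-rig analogue of \cref{lem:ColimitPresentationForCOC}, and I would prove it the same way, now using the pseudomonadic biadjunction $\mylift\Sym\dashv\mylift\sfU$ of \cref{thm:sym-alg-coc}. Write $T=\mylift\sfU\mylift\Sym$ for the induced pseudomonad on $\COC$. Given a 2-rig $R$, its underlying cocomplete category $\mylift\sfU R$ carries a $T$-pseudoalgebra structure $a\co T\mylift\sfU R\to \mylift\sfU R$. This structure is the counit $\varepsilon_R\co \mylift\Sym\mylift\sfU R\to R$ transported along $\mylift\sfU$.

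First, form the canonical codescent (pseudo-bar) diagram in $\RIG$, omitting the invertible 2-cells:
\[
\begin{tikzcd}
\mylift\Sym\, T^2\mylift\sfU R
\ar[r, shift left = -3]
\ar[r, shift left = 3]
\ar[r, shift left = 0]
&
\mylift\Sym\, T\mylift\sfU R
\ar[r, shift left = -3]
\ar[r, shift left = 3]
&
\ar[l]
\mylift\Sym\,\mylift\sfU R
\mathrlap{.}
\end{tikzcd}
\]
The face maps are built from the counit $\varepsilon$ of the biadjunction and from $\mylift\Sym$ applied to the algebra structure $a$. The degeneracies come from the unit. The coherence isomorphisms come from the pseudoalgebra and pseudoadjunction data. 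Every object in this diagram is of the form $\mylift\Sym\,C$ with $C$ a cocomplete category, namely $C=T^n\mylift\sfU R$. So every object is a symmetric-algebra 2-rig by definition.

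Second, I would invoke the bicategorical Beck theorem~\cite[Theorem~3.5]{CreurerI:bectpm}. It states that, for a pseudomonadic biadjunction, every pseudoalgebra is the bicolimit, in the category of pseudoalgebras, of its canonical codescent diagram of free pseudoalgebras. The cocone is provided by $\varepsilon_R$ together with the evident invertible 2-cells. By \cref{thm:sym-alg-coc}, $\RIG$ is biequivalent, via the comparison pseudofunctor, to the bicategory of $T$-pseudoalgebras. Under this biequivalence the free pseudoalgebra on $C$ corresponds to $\mylift\Sym\,C$, and the pseudoalgebra $(\mylift\sfU R,a)$ corresponds to $R$. Transporting the codescent bicolimit across the biequivalence therefore exhibits $R$ as the bicolimit in $\RIG$ of the diagram displayed above.

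The main obstacle is bookkeeping rather than any new idea. The point to check is that the cited result yields a bicolimit in the bicategory of pseudoalgebras, and not only in $\COC$ after forgetting. It also requires that the bicolimit be preserved by the comparison biequivalence, which holds automatically since biequivalences preserve bicolimits. If the reference gives only the $\COC$-level statement (the split-codescent form, as used in \cref{lem:ColimitPresentationForCOC}), I would instead argue directly. For any 2-rig $S$, the hom-category $\RIG(R,S)$ is equivalent to the category of $T$-pseudomorphisms $\mylift\sfU R\to\mylift\sfU S$. By the free/forgetful equivalence $\RIG(\mylift\Sym\,C,S)\simeq\COC(C,\mylift\sfU S)$, this category is in turn the descent object of the hom-categories out of the diagram, which is exactly the universal property of the codescent bicolimit.
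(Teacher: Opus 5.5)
Your proposal is correct and follows the paper's route: the paper gives no separate proof and records this lemma as a direct consequence of the pseudomonadicity of $\mylift\Sym\dashv\mylift\sfU$ (\cref{thm:sym-alg-coc}) via~\cite[Theorem~3.5]{CreurerI:bectpm}. That is exactly the canonical resolution of $R$ by the free 2-rigs $\mylift\Sym(\mylift\sfU\mylift\Sym)^n\mylift\sfU R$ that you describe, and your fallback hom-category argument is just the standard unpacking of that cited result, so it is consistent with it but not needed.
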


\subsection*{Convolution, operadic, and free symmetric 2-rigs} 

Diagram~\cref{diag:monadicitysquare} naturally lead us to consider two full
sub-bicategories of $\RIG$: the bicategory $\CRIG$ of \emph{convolution}
2-rigs and its full sub-bicategory $\FRIG$ of \emph{free} 2-rigs.  These are
respectively defined as the essential images of the pseudofunctor 
\[
  \mylift\spsh : \SMCAT \to \RIG
\]
and of its restriction
\[
  \mylift\spsh\freesmc \simeq \mylift\Sym\spsh : \CAT \to \RIG
  \mathrlap.
\]

We let $\CRig = \CRIG\cap\Rig$ and $\FRig = \FRIG\cap\Rig$; that is, $\FRig$
(resp.~$\CRig$) is the full sub-bicategory of $\Rig$ spanned by free
(resp.~convolution) 2-rigs on a (resp.~symmetric monoidal) \emph{small}
category.

We further consider an intermediate class of 2-rigs which we call 
\emph{operadic}.  These arise from a biadjuction 
\[\begin{tikzcd}
\OPD 
\ar[r, shift left = 2, "\Env"] 
\ar[r, phantom, description, "\scriptstyle{\vdash}" rotate=90] & 
\ar[l, shift left = 2, "\End"] \SMCAT 
\end{tikzcd}\]
between the bicategory of coloured operads $\OPD$ and the bicategory of
symmetric monoidal categories $\SMCAT$, details for which may be found
in~\cite[\S{5.2}]{BataninM:regpsf} and~\cite[\S{4}]{AnelM:ope2r}.  
Thereby, we let $\OpdRIG$ be the full sub-bicategory of $\CRIG$ given by the
essential image of the pseudofunctor 
\[
  \mylift\spsh \: \Env: \OPD \to \CRIG
  \mathrlap.
\]
We also let $\OpdRig = \OpdRIG\cap\Rig$.

The following diagram summarizes 
the bicategories defined above:
\[\begin{tikzcd}
\FRig \ar[r,hook] \ar[d,hook]
& \OpdRig \ar[r,hook] \ar[d,hook]
& \CRig \ar[r,hook] \ar[d,hook]
& \Rig \ar[d,hook] 
&
\\
\FRIG \ar[r,hook] 
& \OpdRIG \ar[r,hook]
& \CRIG \ar[r,hook] 
& \RIG 
& \hspace{-15mm}.
\end{tikzcd}\]

\subsection*{Finite coproducts of symmetric 2-rigs} 
\label{Subsection:CoproductsOf2Rigs} 

\begin{prop} \label{prop:RIGcocartesian}
The bicategory $\RIG$ admits finite coproducts, given by finite tensor
products of cocomplete categories. 
\end{prop}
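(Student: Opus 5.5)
The plan is to verify directly that the tensor product of cocomplete categories, with its induced symmetric monoidal structure, has the universal property of a bicategorical coproduct in $\RIG$, and that $\Set$ is initial. This is the 2-dimensional analogue of the fact that the tensor product of commutative rings is their coproduct. Throughout we view a 2-rig as a symmetric pseudomonoid in $(\COC,\otimes,\Set)$, as explained after \cref{thm:sym-alg-coc}, and a 2-rig morphism as a symmetric strong monoidal cocontinuous functor.

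\emph{Initial object.} We show $\Set$ is initial, with its cartesian structure, which is the monoidal unit of $\COC$. For a 2-rig $R$, a cocontinuous functor $\Set\to R$ is determined up to isomorphism by the image of the singleton, since $\hom\COC\Set R\simeq R$. A strong monoidal structure forces that image to be isomorphic to the unit $\one_R$, compatibly with the unit constraint. Hence $\hom\RIG\Set R$ is equivalent to the category of objects $x$ equipped with an isomorphism $\one_R\cong x$, which is contractible.

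\emph{Binary coproducts: the structure.} For 2-rigs $R,S$ we equip $R\otimes S$ with a 2-rig structure. We use the symmetric monoidal structure of $\COC$ (\cref{prop:COC-SMC}): the multiplication is the composite
\[
(R\otimes S)\otimes(R\otimes S)\simeq (R\otimes R)\otimes(S\otimes S)\to R\otimes S,
\]
built from the symmetry and the two multiplications, and the unit is $\Set\simeq\Set\otimes\Set\to R\otimes S$. Equivalently, on generators $(r\otimes s)\cdot(r'\otimes s')=(rr')\otimes(ss')$, extended separately cocontinuously. That this is a symmetric pseudomonoid is the standard fact that symmetric pseudomonoids in a symmetric monoidal bicategory are closed under $\otimes$. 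The coprojections are $\iota_R=R\otimes\one_S\co R\simeq R\otimes\Set\to R\otimes S$ and $\iota_S$ defined symmetrically, and both are strong monoidal.

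\emph{Binary coproducts: universality.} We show that restriction along $\iota_R,\iota_S$ gives an equivalence
\[
\hom\RIG{R\otimes S}T\simeq\hom\RIG RT\times\hom\RIG ST.
\]
Given 2-rig maps $f\co R\to T$ and $g\co S\to T$, take the separately cocontinuous functor $(r,s)\mapsto f(r)\cdot g(s)$ and its cocontinuous extension $h\co R\otimes S\to T$. The monoidal constraint of $h$ is induced on generators by
\[
f(r)g(s)f(r')g(s')\cong f(rr')g(ss'),
\]
using the symmetry of $T$. Since $R\otimes S$ is generated under colimits by pure tensors and both sides are separately cocontinuous, this extends uniquely, and so do the coherence checks.

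Conversely, given $h$, we need $h\cong (h\iota_R)\cdot(h\iota_S)$ monoidally. This follows from $r\otimes s\cong (r\otimes\one)\cdot(\one\otimes s)$ in $R\otimes S$, together with the same density argument. The equivalence on 2-cells is handled the same way: a monoidal transformation out of $R\otimes S$ is determined by its components on pure tensors, and these are forced by monoidality once the components on the images of $\iota_R$ and $\iota_S$ are given.

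\emph{Main obstacle.} The hard part is checking the coherence of the monoidal constraint of $h$ and the full faithfulness on 2-cells. To keep this bookkeeping manageable, we work at the level of symmetric pseudomonoids in $\COC$: the statement is the general bicategorical fact that $\PsSMon_\otimes$ of a symmetric monoidal bicategory has coproducts given by $\otimes$. We invoke the coherence theorems for symmetric monoidal bicategories to reduce the 2-cell verifications to the analogous 1-categorical argument for commutative monoids.
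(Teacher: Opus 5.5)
Your proposal is correct and follows the paper's route. The paper also regards 2-rigs as symmetric pseudomonoids in $(\COC,\otimes,\Set)$ and cites the general fact \cite[Theorem~5.2]{SchappiD:indacq} that symmetric pseudomonoids in a symmetric monoidal bicategory have finite coproducts given by $\otimes$; it then describes the same initial object $\Set$ and the same pointwise multiplication and unit on $\mylift\sfU(R)\otimes\mylift\sfU(S)$. Your pure-tensor verification of the universal property is an unpacked version of that citation, and it goes through because cocontinuous functors and 2-cells out of a tensor product are determined by their restrictions to pure tensors.
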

\begin{proof}   
This is an instance of a general fact concerning bicategories of symmetric
pseudomonoids in symmetric monoidal
bicategories, see~\cite[Theorem~5.2]{SchappiD:indacq}.

Specifically, the initial 2-rig is the cartesian monoidal category $\Set$,
while the coproduct $R+S$ of 2-rigs $R$ and $S$, has underlying cocomplete
category $\mylift\sfU(R)\otimes\mylift\sfU(S)$ and underlying symmetric
monoidal category with tensor product and unit given pointwise:
\[
\mylift\sfU(R)\otimes\mylift\sfU(S)\otimes\mylift\sfU(R)\otimes\mylift\sfU(S)
\simeq
\mylift\sfU(R)\otimes\mylift\sfU(R)\otimes\mylift\sfU(S)\otimes\mylift\sfU(S)
\longrightarrow
\mylift\sfU(R)\otimes\mylift\sfU(S)
\longleftarrow
\Set\otimes\Set
\simeq
\Set
\mathrlap.
\qedhere
\]
\end{proof} 

\begin{prop} \label{prop:coprod} \leavevmode
The bicategories 
\begin{enumerate}
\item \label{prop:conv-coprod}
  $\CRIG$ and $\CRig$ of convolution 2-rigs, 

\item \label{prop:opd-coprod} 
  $\OpdRIG$ and $\OpdRig$ of operadic 2-rigs, and 
  
\item \label{prop:free-coprod}
  $\FRIG$ and $\FRig$ of free 2-rigs 
\end{enumerate}
are cocartesian.
\end{prop}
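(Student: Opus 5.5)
The approach is to reduce everything to \cref{prop:RIGcocartesian}. It suffices to show that each of these full sub-bicategories of $\RIG$ contains the initial 2-rig $\Set$ and is closed under binary coproducts in $\RIG$. A full sub-bicategory closed under the ambient finite coproducts inherits them, because hom-categories are computed in $\RIG$. Each bicategory in the unbarred list is defined as an essential image. So closure under coproducts amounts to finding, for given generators, a generator whose image is the ambient coproduct $\mylift\sfU(R)\otimes\mylift\sfU(S)$ with its pointwise structure.

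\textbf{Convolution 2-rigs.} First, $\Set\simeq\spsh(\terminalcat)$, and the convolution structure on it is the cartesian one, where $\terminalcat$ carries its unique symmetric monoidal structure. Next, for symmetric monoidal categories $M,N$, I will show that
\[
\mylift\spsh(M)+\mylift\spsh(N)\simeq\mylift\spsh(M\times N),
\]
where $M\times N$ has the pointwise structure. The cleanest route is the universal property $\RIG(\mylift\spsh M,R)\simeq\SMCAT(M,R)$. Coproducts in $\RIG$ are preserved by the left biadjoint $\mylift\spsh$, so it is enough to know that $M\times N$, with its inclusions $m\mapsto(m,\one)$ and $n\mapsto(\one,n)$, is a coproduct in $\SMCAT$. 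This holds because finite coproducts and products coincide in $\SMONCAT$ (the earlier proposition). The empty case, $\mylift\spsh$ of the terminal symmetric monoidal category, gives $\Set$. As a sanity check, on underlying categories this agrees with \cref{prop:TensorOfPresheafCats}: $\spsh(M)\otimes\spsh(N)\simeq\spsh(M\times N)$. Smallness is preserved, which handles $\CRig$.

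\textbf{Free 2-rigs.} We have $\mylift\spsh\freesmc\dashv\sfU\mylift\sfUin$, a left biadjoint, so it preserves coproducts. Hence
\[
\mylift\spsh\freesmc(A)+\mylift\spsh\freesmc(B)\simeq\mylift\spsh\freesmc(A+B),
\]
and $\mylift\spsh\freesmc(\initialcat)\simeq\Set$. Small categories are closed under $+$, which handles $\FRig$.

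\textbf{Operadic 2-rigs.} Here $\mylift\spsh\,\Env$ is a composite of left biadjoints, so it preserves whatever coproducts exist in $\OPD$. I will therefore need the bicategory of coloured operads to have finite coproducts: the empty operad, and the disjoint union of colour sets with no mixed operations. Smallness is preserved by this construction. This operadic step is the main obstacle, since it relies on facts about $\OPD$ from \cite{AnelM:ope2r,BataninM:regpsf} rather than on results in this paper. If needed, I would instead verify directly that $\Env(P\sqcup Q)\simeq\Env P\times\Env Q$ and then apply the convolution case.
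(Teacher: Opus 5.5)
Your proposal is correct and follows essentially the paper's proof. Each source bicategory ($\SMCAT$, $\OPD$, $\CAT$) is cocartesian, and the corresponding pseudofunctor into $\RIG$ is a left biadjoint that preserves finite coproducts, so each essential image is closed under the coproducts of \cref{prop:RIGcocartesian}; you simply spell out the explicit coproducts and the smallness check for the $\Rig$ variants, which the paper's one-line proof leaves implicit.
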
 
\begin{proof} 
Because the bicategories \ref{prop:conv-coprod}~$\SMCAT$, 
\ref{prop:opd-coprod}~$\OPD$, \ref{prop:free-coprod}~$\CAT$ and the
pseudofunctors \ref{prop:conv-coprod}~$\mylift\spsh \co \SMCAT\to\RIG$,
\ref{prop:opd-coprod}~$\mylift\spsh\Env \co {\OPD\to\RIG}$, 
\ref{prop:free-coprod}~$\mylift\spsh\freesmc \co \CAT\to\RIG$ are cocartesian.
\end{proof}

\subsection*{Coexponentiability of symmetric 2-rigs} 
\label{sec:coexp}

The purpose of this section is to establish the following characterization.

\begin{thm}
\label{thm:coexponentiability-in-crig}
A 2-rig  is coexponentiable in $\RIG$ if and only if its underlying cocomplete
category is dualizable in $\COC$.  
\end{thm}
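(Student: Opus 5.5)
We follow the Niefield--Wood strategy, adapted to the pseudomonadic biadjunction $\mylift\Sym\dashv\mylift\sfU$ of \cref{thm:sym-alg-coc}. The two directions are handled separately; the common ingredient is a description of the coproduct $R\otimes(-)$ on symmetric-algebra 2-rigs. For a 2-rig $R$ and a cocomplete category $C$, we show
\[
R\otimes \mylift\Sym\,C \;\simeq\; \mylift\Sym_R(R\otimes C),
\]
the free $R$-algebra on the free $R$-module $R\otimes C$. More usefully, we use the equivalences
\[
\hom\RIG{\mylift\Sym\,C}{R\otimes Z}\simeq\hom\COC{C}{\mylift\sfU R\otimes \mylift\sfU Z},
\]
valid for all 2-rigs $Z$ because the coproduct in $\RIG$ has underlying object the tensor product in $\COC$ (\cref{prop:RIGcocartesian}).

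\emph{($\Leftarrow$)} Suppose $\mylift\sfU R$ is dualizable with dual $R\dual$. By \cref{prop:DualInSMCBiCs}, $(-)\otimes\mylift\sfU R$ has left biadjoint $R\dual\otimes(-)$ on $\COC$. For a free 2-rig $Y=\mylift\Sym\,C$ we set $R\coexp\mylift\Sym\,C\defeq\mylift\Sym(R\dual\otimes C)$. Then
\[
\hom\RIG{\mylift\Sym(R\dual\otimes C)}{Z}\simeq\hom\COC{R\dual\otimes C}{\mylift\sfU Z}\simeq\hom\COC{C}{\mylift\sfU R\otimes\mylift\sfU Z}\simeq\hom\RIG{\mylift\Sym\,C}{R\otimes Z}.
\]
For a general 2-rig $Y$, \cref{lem:sym-colim} presents $Y$ as a bicolimit of a codescent diagram of symmetric-algebra 2-rigs. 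The biadjoint is defined on objects as the corresponding bicolimit of the $R\coexp(-)$ of the pieces; this requires $\RIG$ to have these bicolimits, which holds because it is pseudomonadic over the cocomplete $\COC$ with a finitary-type monad. Then $\hom\RIG{Y}{R\otimes Z}$ is the bilimit of the homs out of the pieces. The functoriality of $R\coexp(-)$ on the maps of the codescent diagram, which are not all free maps, must be checked. We handle this by noting that the pointwise-defined left biadjoint on the full sub-bicategory of free 2-rigs extends along the dense inclusion, because the codescent diagram is built from $\mylift\Sym\mylift\sfU$ and its structure maps.

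\emph{($\Rightarrow$)} Suppose $R\otimes(-)$ has a left biadjoint $L$ on $\RIG$. We apply it to $\mylift\Sym\,C$ and compose with $\mylift\sfU$:
\[
\hom\COC{C}{\mylift\sfU R\otimes\mylift\sfU Z}\simeq\hom\RIG{L\mylift\Sym\,C}{Z}.
\]
Taking $Z=\mylift\Sym\,D$ free shows that the pseudofunctor $\mylift\sfU R\otimes\mylift\sfU\mylift\Sym(-)$ is a right biadjoint. To extract that $\mylift\sfU R\otimes(-)$ itself preserves bilimits and is a right biadjoint, we use that $\mylift\sfU\mylift\Sym D=\oplus_n\mylift\Sym_n D$ contains $D$ as a retract (the $n=1$ summand, by \cref{lem:coc-additive}), naturally in $D$. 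Hence $\mylift\sfU R\otimes D$ is a retract of $\mylift\sfU R\otimes\mylift\sfU\mylift\Sym D$. From this we deduce that $\mylift\sfU R\otimes(-)\co\COC\to\COC$ preserves all bilimits, in particular arbitrary products, so the strength map for $\inthom\COC{R}{\Set}\otimes R\to\inthom\COC RR$ becomes an equivalence. Alternatively, we check nuclearity directly by writing $\inthom\COC{R}{-}$ via products and equalizer-like bilimits. \Cref{thm:DualizableEqualNuclear} then gives dualizability.

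The main obstacle is this direction: passing from a left biadjoint on $\RIG$ to one for $\mylift\sfU R\otimes(-)$ on $\COC$. The cleanest attempt uses the retraction $D\to\mylift\sfU\mylift\Sym D\to D$ to realize $\mylift\sfU R\otimes(-)$ as a retract of a right biadjoint. Right biadjoints are closed under retracts once idempotent 2-cells split, and $\COC$ is Cauchy complete. The resulting left biadjoint is $R\dual\otimes(-)$, with $R\dual\simeq\mylift\sfU(L\mylift\Sym\Set)$ restricted to the linear part.
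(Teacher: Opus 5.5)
Your $(\Leftarrow)$ direction is the paper's argument: \cref{lem:coexp-sym}, then extension along the codescent presentation of \cref{lem:sym-colim}. Non-free maps cause no trouble, because a partial left biadjoint is automatically pseudofunctorial on the full sub-bicategory of symmetric-algebra 2-rigs.

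The $(\Rightarrow)$ direction, however, rests on a false claim: $\mylift\sfU R\otimes\mylift\sfU\mylift\Sym(-)\colon\COC\to\COC$ is \emph{not} a right biadjoint. Your equivalence $\hom\COC{C}{\mylift\sfU R\otimes\mylift\sfU Z}\simeq\hom\RIG{L\mylift\Sym C}{Z}$ only shows that $Z\mapsto\mylift\sfU R\otimes\mylift\sfU Z$ is a right biadjoint $\RIG\to\COC$. Precomposing with the \emph{left} biadjoint $\mylift\Sym$ destroys this. Concretely, $\mylift\Sym(\zeroobj)\simeq\Set$ is the initial 2-rig, so your pseudofunctor sends the terminal object $\zeroobj$ to $\mylift\sfU R\otimes\Set\simeq\mylift\sfU R$, which is not terminal unless $R$ is trivial. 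So ``$\mylift\sfU R\otimes(-)$ is a retract of a right biadjoint'' is unavailable, and Cauchy completeness cannot rescue it. The retraction $D\to\mylift\sfU\mylift\Sym D\to D$ has to be applied to \emph{diagrams}, as in the paper. For $C_\bullet\colon I\to\COC$, the comparison map $\mylift\sfU R\otimes\bilim_i C_i\to\bilim_i(\mylift\sfU R\otimes C_i)$ is a pseudo-retract of the comparison map for the diagram $\mylift\sfU\mylift\Sym C_\bullet$. The latter is an equivalence because that diagram lifts to $\RIG$, where $\mylift\sfU$ and $R+(-)$ both preserve bilimits. The key step uses $\bilim_i\mylift\sfU\mylift\Sym C_i\simeq\mylift\sfU(\bilim_i\mylift\Sym C_i)$, never $\mylift\sfU\mylift\Sym(\bilim_i C_i)$.

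Second, even granting that $\mylift\sfU R\otimes(-)$ preserves ($\lambda$-small) bilimits, ``in particular arbitrary products, so the strength map becomes an equivalence'' does not follow. Your alternative is only a gesture. The missing ingredient is a presentation $\mylift\sfU R\simeq\bicolim_i\psh{A_i}$ as a $\lambda$-small bicolimit of presheaf categories over small categories (\cref{lem:ColimitPresentationForCOC}), each of which is dualizable (\cref{thm:dualizable-coc}). Then
\[
\inthom\COC{\mylift\sfU R}{\Set}\otimes\mylift\sfU R\simeq\bilim_i\big(\inthom\COC{\psh{A_i}}{\Set}\otimes\mylift\sfU R\big)\simeq\bilim_i\inthom\COC{\psh{A_i}}{\mylift\sfU R}\simeq\inthom\COC{\mylift\sfU R}{\mylift\sfU R}.
\]
The first step is exactly where bilimit preservation by $\mylift\sfU R\otimes(-)$ is used, for a general $\lambda$-small bilimit rather than a product. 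The second step uses dualizability of each $\psh{A_i}$. The result is the nuclearity that \cref{thm:DualizableEqualNuclear} requires. Your closing identification of $R\dual$ with the linear part of $\mylift\sfU(L\mylift\Sym\Set)$ is consistent with \cref{lem:coexp-sym}, but it does not by itself establish dualizability.
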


Using \cref{thm:dualizable-coc}, we get the following concrete reformulation.

\begin{cor} 
A 2-rig  is coexponentiable in $\RIG$ if and only if its underlying cocomplete
category is a deformation retract of a presheaf category over a small
category.
\end{cor}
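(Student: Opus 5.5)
The plan is to obtain the corollary by composing two characterizations already stated in the paper: \cref{thm:coexponentiability-in-crig}, which reduces coexponentiability of a 2-rig $R$ in $\RIG$ to dualizability of $\mylift\sfU(R)$ in $\COC$, and \cref{thm:dualizable-coc}, which identifies the dualizable objects of $\COC$ with the deformation retracts of presheaf categories $\PSh A$ over small categories $A$. No new construction is needed; the argument is a chain of equivalences, and I will check each direction explicitly so that the notion of deformation retract used matches the one in the statement.

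For the forward direction, let $R$ be coexponentiable in $\RIG$. By \cref{thm:coexponentiability-in-crig}, $\mylift\sfU(R)$ is dualizable in the symmetric monoidal closed bicategory $(\COC,\otimes,\Set)$ of \cref{prop:COC-SMC}. The second half of the proof of \cref{thm:dualizable-coc} then supplies a small category $J$ and cocontinuous functors $S\co \mylift\sfU(R)\to\PSh J$ and $R'\co\PSh J\to\mylift\sfU(R)$ with $\Id\cong R'S$. This says precisely that $S$ is a pseudo-section in $\COC$, so $\mylift\sfU(R)$ is a deformation retract of $\PSh J$ in the sense of \cref{sec:background}.

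Conversely, suppose $\mylift\sfU(R)$ is a deformation retract, in $\COC$, of some $\PSh A$ with $A$ small. The first half of the proof of \cref{thm:dualizable-coc} shows that $\PSh A$ is dualizable, with dual $\PSh{A^\op}$. By \cref{thm:DualizableEqualNuclear} it is therefore nuclear. By \cref{lem:Rectractdualizable}, $\mylift\sfU(R)$ is then nuclear, and applying \cref{thm:DualizableEqualNuclear} once more shows it is dualizable. Finally, \cref{thm:coexponentiability-in-crig} gives that $R$ is coexponentiable in $\RIG$.

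The only point needing care is a matter of reading rather than an obstacle. The phrase ``deformation retract'' in the corollary must be understood in the bicategory $\COC$, so the section and retraction are cocontinuous functors and the isomorphism is a natural isomorphism. It is not meant in $\RIG$: the retraction maps need not be monoidal, and $\PSh A$ need not carry any chosen 2-rig structure compatible with that of $R$. With this reading the corollary follows directly. All the substantive work sits in \cref{thm:coexponentiability-in-crig}, which I assume as stated.
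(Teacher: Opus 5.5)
Your proposal is correct and matches the paper, which obtains the corollary simply by combining \cref{thm:coexponentiability-in-crig} with \cref{thm:dualizable-coc}. Unpacking the two halves of the proof of \cref{thm:dualizable-coc} is harmless but unnecessary, and your reading of ``deformation retract'' as taken in $\COC$ (cocontinuous section and retraction) is the one the paper intends.
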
 

\Cref{thm:coexponentiability-in-crig} may be seen as a 2-dimensional version
of a result for rigs of Niefield and Wood~\cite{NiefieldS:coepr}.  Our proof
is a novel adaptation and generalization of the method therein based on the
biadjunction 
$\mylift\Sym \co \COC\rightleftarrows \RIG \co \mylift\sfU$ 
from~\cref{thm:sym-alg-coc}.  
The result of \cref{thm:coexponentiability-in-crig} will be a consequence of
\cref{prop:coexponentiability-in-crig:necessary,prop:coexponentiability-in-crig:sufficient}
which respectively prove that the condition is necessary and sufficient.

\begin{prop}
\label{prop:coexponentiability-in-crig:necessary}
If a 2-rig $\rigR$ is coexponentiable in $\RIG$, then the cocomplete
category $\mylift\sfU\rigR$ is dualizable in $\COC$.
\end{prop}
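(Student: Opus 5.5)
The plan is to adapt the Niefield--Wood argument. Coexponentiability of $R$ gives a left biadjoint to $R+(-)$. We will evaluate this biadjunction on \emph{square-zero extensions}, a class of 2-rigs whose homs out of symmetric-algebra 2-rigs recover cocontinuous functors. The aim is to produce a cocomplete category $\Omega$ and an equivalence $\mylift\sfU R\otimes D\simeq\inthom\COC\Omega D$ that is pseudonatural in $D\in\COC$ and compatible with the strength. We then conclude with \cref{thm:DualizableEqualNuclear}, or directly with the characterization of duals through internal homs in \cref{prop:DualInSMCBiCs}.

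\emph{Step 1: square-zero extensions.} For a cocomplete category $D$, let $\Set\oplus D$ be the 2-rig with underlying cocomplete category the sum-product of \cref{lem:coc-additive}. Its unit is $(1,0)$ and its multiplication is determined by $D\otimes D\to 0$. There is an augmentation $\Set\oplus D\to\Set$. By \cref{prop:RIGcocartesian}, the coproduct $R+(\Set\oplus D)$ has underlying category $\mylift\sfU R\otimes(\Set\oplus D)\simeq \mylift\sfU R\oplus(\mylift\sfU R\otimes D)$. This is again a square-zero extension, now of $R$ by the $R$-module $R\otimes D$, augmented over $R\simeq R+\Set$.

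\emph{Step 2: comparing fibres.} Using the biadjunction $\mylift\Sym\dashv\mylift\sfU$ of \cref{thm:sym-alg-coc} and the decomposition $\mylift\Sym\cocC=\oplus_n\mylift\Sym_n\cocC$, we check the following. A 2-rig map $\mylift\Sym\Set\to R\oplus(R\otimes D)$ lying over the map $\mylift\Sym\Set\to R$ that sends the generator to $0$ amounts to its component in degree $1$. That component is an object of $\mylift\sfU R\otimes D$, because the square-zero condition kills higher degrees. So $\mylift\sfU R\otimes D$ is a bifibre of
\[
\RIG(\mylift\Sym\Set,R+(\Set\oplus D))\to\RIG(\mylift\Sym\Set,R+\Set).
\]
The biadjunction $R\coexp(-)\dashv R+(-)$ is pseudonatural in the second variable. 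Hence this bifibre is equivalent to the bifibre of
\[
\RIG(E,\Set\oplus D)\to\RIG(E,\Set),\qquad E\defeq R\coexp\mylift\Sym\Set,
\]
taken over the point $p\co E\to\Set$ that corresponds to the base point above.

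\emph{Step 3: derivations and the cotangent category.} Bifibres of $\RIG(E,\Set\oplus D)\to\RIG(E,\Set)$ over $p$ are categories of ``$p$-derivations'' $E\to D$. These are cocontinuous functors $\delta\co E\to D$ satisfying a Leibniz isomorphism $\delta(xy)\cong p(x)\delta(y)+p(y)\delta(x)$. Such a category is a bilimit (a pseudo-equalizer) in $\COC$. We show it is represented, $\mathrm{Der}_p(E,D)\simeq\COC(\Omega,D)$. Here $\Omega$ is the localisation of $\mylift\sfU E$ that enforces the Leibniz rule: a coinverter/coequifier in $\COC$ of the two cocontinuous functors $E\otimes E\rightrightarrows E$ given by $\delta\circ\otimes$ and $p\otimes\delta+\delta\otimes p$, built as in the localisation description of $\otimes$. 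Combining Steps 2 and 3 gives an equivalence
\[
\mylift\sfU R\otimes D\simeq\COC(\Omega,D)
\]
pseudonatural in $D$. This is also an equivalence of cocomplete categories, $\mylift\sfU R\otimes D\simeq\inthom\COC\Omega D$, because both sides preserve cotensors by small categories in $D$.

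\emph{Step 4: conclusion, and the main obstacle.} Taking $D=\Set$ gives $\mylift\sfU R\simeq\inthom\COC\Omega\Set$. The equivalence $\inthom\COC\Omega\Set\otimes D\simeq\mylift\sfU R\otimes D\simeq\inthom\COC\Omega D$ is then the strength map $\stmap_{\Omega,D}$, provided the identifications are compatible with the canonical maps. Then $\Omega$ is nuclear, hence dualizable by \cref{thm:DualizableEqualNuclear}, and its dual $\inthom\COC\Omega\Set\simeq\mylift\sfU R$ is therefore dualizable too. The hardest step will be exactly this compatibility of the composite equivalence of Steps 2--3 with the strength. I would check it on $D=\Set$, where both maps reduce to the identity. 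I would then use that both sides are pseudonatural and cocontinuous in $D$, and that every $D$ is a bicolimit of presheaf categories by \cref{lem:ColimitPresentationForCOC}. The representability of $\Omega$ in Step 3 is the other delicate point, but I expect it to be routine via the explicit localisation.
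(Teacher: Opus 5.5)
Your argument is correct in outline, but it takes a genuinely different route from the paper's.

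\textbf{The paper's route.} The paper never constructs a dual. It first shows that $\mylift\sfU R\otimes(-)$ preserves $\lambda$-small bilimits. Each $C$ is a pseudonatural deformation retract of $\mylift\sfU\mylift\Sym C$, by projecting onto the summand $\mylift\Sym_1 C$ and using \cref{lem:coc-additive}. Moreover, $\mylift\sfU R\otimes\mylift\sfU(-)\simeq\mylift\sfU(R+(-))$ preserves bilimits because $R+(-)$ and $\mylift\sfU$ are right biadjoints. It then writes $\mylift\sfU R\simeq\bicolim_i\spsh A_i$ by \cref{lem:ColimitPresentationForCOC}, and uses the dualizability of each $\spsh A_i$ to see that $\stmap_{\mylift\sfU R,\mylift\sfU R}$ is an equivalence. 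So $\mylift\sfU R$ itself is shown to be nuclear.

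\textbf{Your route.} You use square-zero extensions to realise $\mylift\sfU R\otimes D$ as a category of derivations of $E=R\coexp\mylift\Sym\Set$ at a point. You represent it by a cotangent object $\Omega$, and you prove that $\Omega$ is nuclear with dual $\mylift\sfU R$. This gives an explicit, deformation-theoretic description of $(\mylift\sfU R)\dual\simeq\Omega$. The price is the construction of $\Set\oplus D$, the coherence bookkeeping for derivations, and extra bicolimits in $\COC$; the paper needs none of these. Note that your Step 2 alone already exhibits $\mylift\sfU R\otimes D$ as the hom-category of 2-rigs over $\Set$ from $(E,p)$ to $\Set\oplus D$. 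That category is bilimit-preserving in $D$, which is exactly the paper's intermediate claim, so you could also finish as the paper does.

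\textbf{Points to fix if you keep your route.}
\begin{enumerate}
\item In Step 3 the parallel pair is $m,\,(p\otimes E)+(E\otimes p)\co E\otimes E\rightrightarrows E$. It does not depend on $\delta$, because $p\otimes\delta\simeq\delta\circ(p\otimes E)$. $\Omega$ is obtained as follows:
\begin{itemize}
\item a co-iso-inserter of this pair (not a coinverter);
\item a co-iso-inserter forcing $\delta(1)\cong 0$, which is part of the data and not a consequence of the Leibniz rule;
\item coequifiers for the associativity, symmetry and unit coherences.
\end{itemize}
\item In Step 4 you need not identify your composite with the strength; doing so would require an extra uniqueness argument. The map $\stmap_{\Omega,-}$ is pseudonatural and invertible at $\Set$. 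Both $\inthom\COC\Omega\Set\otimes(-)$ and $\inthom\COC\Omega{-}\simeq\mylift\sfU R\otimes(-)$ preserve bicolimits. Hence $\stmap_{\Omega,-}$ is invertible at each copower $\spsh A\simeq A\odot\Set$, and then at every $D$ by \cref{lem:ColimitPresentationForCOC}.
\item The appeal to cotensors at the end of Step 3 is unnecessary. An equivalence of categories between cocomplete categories is automatically cocontinuous.
\end{enumerate}
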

\begin{proof}
We start by showing that $\mylift\sfU(\rigR)\otimes-$ preserves bilimits of
$\lambda$-small diagrams.

Note first that the unit of the adjunction 
$\mylift\Sym \co \COC\rightleftarrows \RIG:\mylift\sfU$ is a pseudo-section,
providing a pseudo-natural deformation retract
\begin{equation}
  C \rightleftarrows \mylift\sfU\mylift\Sym(C)
  \mathrlap.
\end{equation}
Indeed, recalling from \cref{sec:2rig-pseudomonoids} that
$\mylift\Sym(C) = \oplus_n \mylift\Sym_n(C)$ and using the fact that countable
coproducts are also products in $\COC$ (\cref{lem:coc-additive}), the
pseudo-section arises from a projection map as the composite
$\mylift\sfU\mylift\Sym(C) 
 \simeq 
 \oplus_n \mylift\sfU\mylift\Sym_n(C) 
 \to 
 \mylift\sfU\mylift\Sym_1(C) 
 \simeq 
 C$.

Now, for a $\lambda$-small diagram $C_\bullet \co I\to\COC$, one shows that
the canonical map in $\COC$ 
\begin{equation*}\textstyle
\label{eq:map-retract-1}
\mylift\sfU(R)\otimes \bilim_i C_i
\to 
\bilim_i (\mylift\sfU(R)\otimes C_i)
\end{equation*}
is an equivalence by observing that it is a pseudo-retract of the following
canonical map 
\begin{equation*}\textstyle
\mylift\sfU(R)\otimes \bilim_i \mylift\sfU\mylift\Sym(C_i)
\to 
\bilim_i \big(\mylift\sfU(R) \otimes \mylift\sfU\mylift\Sym(C_i)\big)
\end{equation*}
which, in turn, is a canonical equivalence as follows:
\begin{align*}
\textstyle
\mylift\sfU(R)\otimes \bilim_i \mylift\sfU\mylift\Sym(C_i)
& \textstyle
\simeq \mylift\sfU(R)\otimes \mylift\sfU\big(\bilim_i \mylift\Sym(C_i)\big)
\\
& \textstyle
\simeq \mylift\sfU\big(R + \bilim_i \mylift\Sym(C_i)\big)
\\
& \textstyle
\simeq \mylift\sfU\bilim_i\big(R + \mylift\Sym(C_i)\big)
\\
& \textstyle
\simeq \bilim_i \mylift\sfU \big(R + \mylift\Sym(C_i)\big)
\\
& \textstyle
\simeq \bilim_i \big(\mylift\sfU(R)\otimes \mylift\sfU\mylift\Sym(C_i)\big)
\mathrlap{.} 
\end{align*}

Finally, the canonical strength map 
$\hom\COC{\mylift\sfU\rigR}{\Set}\otimes\mylift\sfU\rigR 
 \to 
 \hom\COC{\mylift\sfU\rigR}{\mylift\sfU\rigR}$ 
is an equivalence as follows: for any presentation of $\mylift\sfU\rigR$ as a
$\lambda$-small bicolimit of presheaf categories over $\kappa$-small
categories, $\bicolim_i\psh{A_i}$ (\cref{lem:ColimitPresentationForCOC}),
\begin{align*}
\hom\COC{\mylift\sfU\rigR}{\Set}\otimes\mylift\sfU\rigR
&\textstyle
\simeq \hom\COC{\bicolim_i\psh{A_i}}{\Set}\otimes\mylift\sfU\rigR
\\
&\textstyle
\simeq \big(\bilim_i\hom\COC{\psh{A_i}}{\Set}\big)\otimes\mylift\sfU\rigR
\\
&\textstyle
\simeq \bilim_i\big(\hom\COC{\psh{A_i}}{\Set}\otimes\mylift\sfU\rigR\big)
\\
&\textstyle
\simeq \bilim_i\hom\COC{\psh{A_i}}{\mylift\sfU\rigR} 
&\text{(by \cref{thm:dualizable-coc} and \cref{prop:DualInSMCBiCs})}
\\
&\textstyle
\simeq \hom\COC{\bicolim_i\psh{A_i}}{\mylift\sfU\rigR}
\\
&\simeq \hom\COC{\mylift\sfU\rigR}{\mylift\sfU\rigR}
\mathrlap{.}
\end{align*}
Thus, $\mylift\sfU\rigR$ is nuclear and, by \cref{thm:DualizableEqualNuclear},
dualizable.
\end{proof}

\begin{lem}
\label{lem:coexp-sym}
If a 2-rig $\rigR$ is such that the cocomplete category $\mylift\sfU\rigR$ is
dualizable in $\COC$ then, for every symmetric-algebra 2-rig $S$, the
coexponential $\rigR\coexp\!\rigS$ in $\RIG$ exists and, for every cocomplete
category $C$ in $\COC$, the coexponential $\rigR\coexp \mylift\Sym(C)$ may be
taken as the symmetric-algebra 2-rig 
$\mylift\Sym\big((\mylift\sfU\rigR)\dual\otimes C\big)$. 
\end{lem}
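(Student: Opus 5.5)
We verify the universal property of a coexponential directly, by composing pseudonatural equivalences of hom-categories. Write $D = (\mylift\sfU\rigR)\dual$ and fix a cocomplete category $C$ and a 2-rig $Z$. The chain we aim for is
\begin{align*}
\hom\RIG{\mylift\Sym(D\otimes C)}{Z}
&\simeq \hom\COC{D\otimes C}{\mylift\sfU Z}
\\
&\simeq \hom\COC{C}{\mylift\sfU\rigR\otimes\mylift\sfU Z}
\\
&\simeq \hom\COC{C}{\mylift\sfU(\rigR + Z)}
\\
&\simeq \hom\RIG{\mylift\Sym C}{\rigR+Z}
\mathrlap{.}
\end{align*}
The steps, in order, are as follows.
\begin{enumerate}
\item The first equivalence is the biadjunction $\mylift\Sym\dashv\mylift\sfU$ of \cref{thm:sym-alg-coc}.
\item The second equivalence uses the dual pair. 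Since $\mylift\sfU\rigR$ is dualizable with dual $D$, \cref{prop:DualInSMCBiCs} gives $D\otimes(-)\dashv(-)\otimes\mylift\sfU\rigR$, and we then apply the symmetry of $\COC$ (\cref{prop:COC-SMC}).
\item The third equivalence is \cref{prop:RIGcocartesian}, which says that the underlying cocomplete category of the coproduct $\rigR+Z$ is $\mylift\sfU\rigR\otimes\mylift\sfU Z$.
\item The last equivalence is the biadjunction $\mylift\Sym\dashv\mylift\sfU$ again.
\end{enumerate}

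All four steps are pseudonatural in $Z$. For the third step this needs a remark: the identification $\mylift\sfU(\rigR+Z)\simeq\mylift\sfU\rigR\otimes\mylift\sfU Z$ is pseudonatural in $Z$, because the coproduct in $\RIG$ is computed by the tensor product in $\COC$, functorially in both variables. The composite is therefore a pseudonatural equivalence
\[
\hom\RIG{\mylift\Sym(D\otimes C)}{Z}\simeq\hom\RIG{\mylift\Sym C}{\rigR+Z}.
\]
This is exactly the statement that $\mylift\Sym(D\otimes C)$ is a coexponential $\rigR\coexp\mylift\Sym C$. Its unit map $\mylift\Sym C\to\rigR+\mylift\Sym(D\otimes C)$ is obtained by taking $Z=\mylift\Sym(D\otimes C)$ and evaluating the composite at the identity.

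The coexponential is defined by a representability condition, so it is invariant under equivalence of $\rigR\coexp S$ and of $S$. Hence, for an arbitrary symmetric-algebra 2-rig $S\simeq\mylift\Sym C$, the coexponential $\rigR\coexp S$ exists and can be taken to be $\mylift\Sym(D\otimes C)$.

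The main obstacle is coherence in the third step: one must check that the equivalence from \cref{prop:RIGcocartesian} is pseudonatural in $Z$ and compatible with the coprojection $\rigR\to\rigR+Z$. This is where the representing data is pinned down. I would handle it by reading the coproduct structure off \cite[Theorem~5.2]{SchappiD:indacq}, which constructs coproducts of symmetric pseudomonoids functorially. The remaining steps are formal.
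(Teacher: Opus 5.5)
Your proposal is correct and is essentially the paper's proof. It uses the same chain of four equivalences: the biadjunction $\mylift\Sym\dashv\mylift\sfU$, the identification $\mylift\sfU(\rigR+Z)\simeq\mylift\sfU\rigR\otimes\mylift\sfU Z$, the dual-pair adjunction, and the biadjunction again, read in the opposite direction. Your remarks on pseudonaturality in $Z$ and on passing to an arbitrary $S\simeq\mylift\Sym C$ spell out points the paper leaves implicit.
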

\begin{proof}
It follows from the equivalences: for all cocomplete categories $C$ and 2-rigs
$R, T$, 
\begin{align*}
\hom\RIG {\mylift\Sym(C)}{R+T} 
&\simeq \hom\COC {C}{\mylift\sfU (R+T)} \\
&\simeq \hom\COC{C}{{\mylift\sfU(R)} \otimes {\mylift\sfU(T)}} \\
&\simeq \hom\COC {(\mylift\sfU R)\dual\otimes C}{\mylift\sfU\,T} \\
&\simeq \hom\RIG {\mylift\Sym\big((\mylift\sfU R)\dual\otimes C\big)}{T} 
\mathrlap{.} 
\qedhere
\end{align*}
\end{proof}

\begin{prop}
\label{prop:coexponentiability-in-crig:sufficient}
If a 2-rig $\rigR$ is such that the cocomplete category $\mylift\sfU\rigR$ is
dualizable in $\COC$, then $\rigR$ is coexponentiable in $\RIG$.  
\end{prop}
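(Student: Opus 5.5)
The plan follows the Niefield–Wood strategy: present an arbitrary 2-rig by symmetric-algebra 2-rigs, on which coexponentials are already available by \cref{lem:coexp-sym}, and then transport that presentation through the left biadjoint we are constructing. Fix a 2-rig $\rigS$. By \cref{lem:sym-colim}, $\rigS$ is a bicolimit in $\RIG$ of a codescent diagram
\[
\mylift\Sym\mylift\sfU\mylift\Sym\mylift\sfU\mylift\Sym\mylift\sfU\rigS
\ \substack{\to\\[-1mm]\to\\[-1mm]\to}\
\mylift\Sym\mylift\sfU\mylift\Sym\mylift\sfU\rigS
\ \substack{\to\\[-1mm]\to}\
\mylift\Sym\mylift\sfU\rigS \mathrlap{,}
\]
whose objects all lie in the essential image of $\mylift\Sym$. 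Write $S_\bullet$ for this diagram, with $S_k = \mylift\Sym(C_k)$.

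The first step is to show that the objects $\rigR\coexp\rigS'$, for $\rigS'$ a symmetric-algebra 2-rig, assemble pseudofunctorially. By \cref{lem:coexp-sym}, each $\rigR\coexp\rigS'$ exists, via the representability $\hom\RIG{\rigR\coexp\rigS'}{T}\simeq\hom\RIG{\rigS'}{\rigR+T}$. By the bicategorical Yoneda lemma, this data determines a partial left biadjoint to $\rigR+(-)$, defined on the full sub-bicategory of symmetric-algebra 2-rigs. Every map $\rigS'\to\rigS''$ between symmetric-algebra 2-rigs, and every 2-cell between such maps, is therefore transported to $\rigR\coexp\rigS''\leftarrow\ldots$ in the contravariant-in-$T$ sense, that is, to a map $\rigR\coexp\rigS'\to\rigR\coexp\rigS''$. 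This is the same argument as the one building a left biadjoint pointwise from universal arrows, restricted to a full sub-bicategory. Applying it yields a codescent diagram $\rigR\coexp S_\bullet$ in $\RIG$, made of symmetric-algebra 2-rigs $\mylift\Sym((\mylift\sfU\rigR)\dual\otimes C_k)$, together with its coherence isomorphisms.

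The second step is to define $\rigR\coexp\rigS$ as the bicolimit of $\rigR\coexp S_\bullet$ in $\RIG$. This requires $\RIG$ to admit bicolimits of codescent diagrams. I would deduce this from the pseudomonadicity of $\RIG$ over $\CAT$ (\cref{prop:pseudomonadicitysquare}): $\CAT$ is bicocomplete, and the relevant monad preserves $\kappa$-filtered bicolimits (\cref{prop:smoncat-monadic,prop:coc-monadic}). By the standard argument, pseudoalgebra bicategories of such monads have codescent objects, obtained as reflexive codescent objects computed via Linton-type constructions. Alternatively, I would cite bicocompleteness of $\RIG$ directly from \cite{SchappiD:indacq}-style results. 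The universal property is then verified by the chain
\begin{align*}
\hom\RIG{\rigR\coexp\rigS}{T}
&\simeq \textstyle\bilim_k \hom\RIG{\rigR\coexp S_k}{T}
\simeq \bilim_k \hom\RIG{S_k}{\rigR+T}
\\
&\simeq \textstyle\hom\RIG{\bicolim_k S_k}{\rigR+T}
\simeq \hom\RIG{\rigS}{\rigR+T} \mathrlap{,}
\end{align*}
pseudonatural in $T$. Here the middle equivalence uses the pseudonaturality of the equivalences of \cref{lem:coexp-sym} in the $S_k$ variable, which is needed so that they are compatible with the codescent structure maps.

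The main obstacle is the existence of codescent bicolimits in $\RIG$ together with the coherence bookkeeping in the first step. I would handle the coherence by working throughout with the representable pseudofunctors $\hom\RIG{\rigS'}{\rigR+(-)}\co\RIG\to\CAT$. For $\rigS'$ a symmetric-algebra 2-rig these are birepresentable by \cref{lem:coexp-sym}. A general $\rigS$ yields $\hom\RIG{\rigS}{\rigR+(-)}\simeq\bilim_k\hom\RIG{S_k}{\rigR+(-)}$, a bilimit of birepresentables over a codescent diagram, and such a bilimit is birepresented by the bicolimit of the representing objects. This reduces the whole argument to the existence of codescent objects in $\RIG$.
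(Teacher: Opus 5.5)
Your proposal is correct and is essentially the paper's own argument. You present the target 2-rig as a bicolimit of symmetric-algebra 2-rigs via \cref{lem:sym-colim}, apply \cref{lem:coexp-sym} to each piece, and set $\rigR\coexp\rigT = \bicolim_i(\rigR\coexp\rigS_i)$, justified by the same chain of equivalences. The paper simply takes for granted the points you single out as the main obstacle: the pseudofunctorial extension $\rigS'\mapsto\rigR\coexp\rigS'$ on symmetric-algebra 2-rigs, the pseudonaturality of the equivalences in the $\rigS_k$ variable, and the existence of the bicolimit in $\RIG$. Your extra bookkeeping is therefore a refinement of the paper's argument rather than a different route.
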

\begin{proof}
By~\cref{lem:coexp-sym}, we know that the coexponential $\rigR\coexp\!\rigS$
exists for $\rigS$ a symmetric-algebra 2-rig.
Let us see that this implies the existence of $\rigR\coexp\!\rigT$ for an
arbitrary 2-rig $\rigT$.  Indeed, for any a presentation of $\rigT$ as a
bicolimit $\bicolim_i \rigS_i$ of symmetric-algebra 2-rigs $\rigS_i$, for
instance as per~\cref{lem:sym-colim}, the equivalences:
\begin{align*}
\textstyle
\hom\RIG{\bicolim_i\rigS_i}{\rigR+ - }
& \textstyle
\simeq \bilim_i \hom\RIG{ \rigS_i}{\rigR+ - }
\\
& \textstyle
\simeq \bilim_i \hom\RIG{ \rigR\coexp \rigS_i}{ - }
\\
&\textstyle
\simeq \hom\RIG{\bicolim_i\big(\rigR\coexp \rigS_i\big)}{ - }
\end{align*}
show that one may take 
$\rigR\coexp\!\rigT = \bicolim_i \big(\rigR\coexp \rigS_i\big)$.
\end{proof}

\section{Cartesian closed bicategories} 
\label{sec:carcbs}

We draw some consequences concerning cartesian closed structure for 2-rigs.

\bigskip

\begin{prop} \label{prop:MixedCoexpFormula}
$\FRig$ is a coexponential ideal of $\CRig$.  

In fact, for a small symmetric monoidal category $M$ and a category $B$, 
\[
\mylift\spsh(M) \coexp \mylift\spsh(\freesmc B) 
\,\simeq\,
\mylift\spsh\freesmc\big(\sfU M^\op \times B\big)
\mathrlap.
\]
\end{prop}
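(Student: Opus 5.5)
The plan is to reduce the claim to \cref{lem:coexp-sym}. The key observation is that a free 2-rig is a symmetric-algebra 2-rig, since $\mylift\spsh\freesmc \simeq \mylift\Sym\spsh$. Concretely, $\mylift\spsh(\freesmc B) \simeq \mylift\Sym(\spsh B)$.

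First I verify the hypothesis of \cref{lem:coexp-sym} for $R = \mylift\spsh(M)$ with $M$ small. Its underlying cocomplete category is $\spsh(\sfU M)$, a presheaf category over a small category. By the proof of \cref{thm:dualizable-coc}, it is dualizable in $\COC$ with dual $\spsh(\sfU M^\op)$. Here $\sfU M^\op$ means $(\sfU M)^\op$, the opposite of the underlying category, read as the category whose presheaves form the dual.

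Then \cref{lem:coexp-sym} gives the coexponential
\[
\mylift\spsh(M)\coexp\mylift\Sym(\spsh B)
\simeq
\mylift\Sym\big(\spsh(\sfU M^\op)\otimes\spsh B\big).
\]
By \cref{prop:TensorOfPresheafCats}, the tensor factor is $\spsh(\sfU M^\op\times B)$. Applying $\mylift\Sym\spsh\simeq\mylift\spsh\freesmc$ once more yields $\mylift\spsh\freesmc(\sfU M^\op\times B)$, which is the displayed formula.

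For the ideal statement, I take $M$ and $B$ small. Then $\sfU M^\op\times B$ is small, so the coexponential lies in $\FRig$. The universal property of the coexponential is stated in $\RIG$ against all 2-rigs $Z$, so it restricts to the full sub-bicategory $\CRig$. One should also note that coproducts in $\CRig$ agree with those in $\RIG$ (\cref{prop:coprod}, \cref{prop:RIGcocartesian}), so the coexponential computed in $\RIG$ serves in $\CRig$.

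Thus every object of $\CRig$ is coexponentiable against every object of $\FRig$, with the result landing in $\FRig$. That is the meaning of "coexponential ideal". The only mildly delicate point is the bookkeeping of the duality: the dual of $\spsh(A)$ is $\spsh(A^\op)$, and I need this to be precisely the op that appears in the statement. I also need to confirm that the equivalence $\mylift\spsh\freesmc\simeq\mylift\Sym\spsh$ from \cref{prop:pseudomonadicitysquare} is used consistently on both sides.
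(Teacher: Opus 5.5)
Your proposal is correct and follows the paper's proof step for step. You rewrite the free 2-rig as $\mylift\Sym(\spsh B)$, apply \cref{lem:coexp-sym} using the dual $\spsh(\sfU M^\op)$ of $\spsh(\sfU M)$, combine the two presheaf categories into $\spsh(\sfU M^\op\times B)$, and convert back via $\mylift\Sym\spsh\simeq\mylift\spsh\freesmc$. Your appeal to \cref{prop:TensorOfPresheafCats} for the tensor step is the apt reference, and your explicit treatment of the ideal clause (restricting to the full sub-bicategory, where coproducts agree with those in $\RIG$) fills in what the paper leaves implicit.
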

\begin{proof}
Follows from the equivalences: 
\begin{align*}
\mylift\spsh(M) \coexp \mylift\spsh(\freesmc B)
&\simeq
\mylift\spsh(M) \coexp \mylift\Sym(\spsh B)
&&\text{(by Propositions~\ref{prop:pseudomonadicitysquare} 
  and~\ref{sec:2rig-pseudomonoids})}
\\
&\simeq
\mylift\Sym\big((\mylift\sfU\mylift\spsh M)\dual\otimes\spsh B\big)
&&\text{(by \cref{lem:coexp-sym})} 
\\
&\simeq
\mylift\Sym\big((\spsh\sfU M)\dual\otimes\spsh B\big)
&&\text{(by \cref{lem:coexp-sym})} 
\\
&\simeq
\mylift\Sym(\spsh\big(\sfU M^\op\big)\otimes\spsh B)
&&\text{(by \cref{thm:dualizable-coc})}
\\
&\simeq
\mylift\Sym\spsh\big(\sfU M^\op\times B\big)
&&\text{(by \cref{prop:coprod}~\ref{prop:conv-coprod})}
\\
&\simeq
\mylift\spsh\freesmc\big(\sfU M^\op \times B\big)
&&\text{(by Propositions~\ref{prop:pseudomonadicitysquare} 
  and~\ref{sec:2rig-pseudomonoids})}
\qedhere
\end{align*}
\end{proof}

The following are then~\cite[Theorem~5.4]{FioreM:carcbg}
and~\cite[Theorem~3.4.2]{GambinoN:opebaf} interpreted in terms of 2-rigs.
\begin{cor}
\label{cor:CC-FRIG}
The opposite of the bicategory $\FRig$ is cartesian closed.
\end{cor}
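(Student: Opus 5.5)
To prove \cref{cor:CC-FRIG}, I would show that $\FRig$ is cocartesian and that every object of $\FRig$ is coexponentiable with coexponentials lying in $\FRig$. Dualizing then gives that $\FRig^\op$ has finite products and that every object is exponentiable within it, which is cartesian closure.

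\textbf{Finite coproducts.} By \cref{prop:coprod}~\ref{prop:free-coprod}, $\FRig$ is cocartesian. Concretely, the initial object is $\mylift\spsh\freesmc(\initialcat)\simeq\Set$, and binary coproducts are $\mylift\spsh\freesmc(A)+\mylift\spsh\freesmc(B)\simeq\mylift\spsh\freesmc(A+B)$ for small categories $A,B$. The inclusion $\FRig\fullsubcat\RIG$ preserves these coproducts. This is because $\mylift\spsh\freesmc$ is a left biadjoint, hence cocartesian, and $A+B$ is small whenever $A$ and $B$ are.

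\textbf{Coexponentials for a fixed object.} Fix a small category $A$ and set $R=\mylift\spsh\freesmc(A)\simeq\mylift\spsh(M)$ with $M=\freesmc A$. Here $M$ is a small symmetric monoidal category: $\freesmc A$ is a countable coproduct of quotients of the $A^n$, so it is small when $A$ is. Hence $R\in\CRig$. Then \cref{prop:MixedCoexpFormula} gives, for every small category $B$,
\[
R\coexp\mylift\spsh\freesmc(B)\simeq\mylift\spsh\freesmc\big(\sfU(\freesmc A)^\op\times B\big).
\]
The coexponential here exists in $\RIG$, since \cref{lem:coexp-sym} applies: $\mylift\sfU R\simeq\spsh(\sfU\freesmc A)$ is a presheaf category over a small category, so it is dualizable by \cref{thm:dualizable-coc}. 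The category $\sfU(\freesmc A)^\op\times B$ is small, so this coexponential lies in $\FRig$.

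\textbf{Restricting the universal property to $\FRig$.} Because $\FRig$ is full in $\RIG$ and its coproducts are computed as in $\RIG$, the universal property of the coexponential restricts. For every $Z\in\FRig$,
\[
\hom\FRig{R\coexp S}{Z}\simeq\hom\FRig{S}{R+Z}
\]
for all $S\in\FRig$. So $R+(-)\co\FRig\to\FRig$ has a left biadjoint $R\coexp(-)$, i.e.\ $R$ is coexponentiable in $\FRig$. Passing to opposites, $R\times(-)$ has a right biadjoint in $\FRig^\op$, which is the required cartesian closure.

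The only delicate point is size. Every category appearing must be small, so that the construction stays inside $\FRig$ rather than only $\FRIG$, and the coproducts used in $\FRig$ must agree with those of $\RIG$. Both follow from the explicit descriptions above. Finally, $\mylift\spsh\freesmc$ of a small category is presentable, which is noted after \cref{thm:sym-alg-coc}, so these objects do lie in $\Rig$.
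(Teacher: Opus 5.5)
Your proposal is correct and follows the paper's proof, which is the instance $M=\freesmc A$ of \cref{prop:MixedCoexpFormula}, giving $\mylift\spsh\freesmc(A)\coexp\mylift\spsh\freesmc(B)\simeq\mylift\spsh\freesmc\big(\sfU\freesmc A^\op\times B\big)$. You also make explicit what the paper leaves implicit: the cocartesianness of $\FRig$ via \cref{prop:coprod}, the smallness of $\freesmc A$ and of $\sfU\freesmc A^\op\times B$, and the restriction of the universal property along the full, coproduct-preserving inclusion $\FRig\fullsubcat\RIG$.
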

\begin{proof}
Because, by \cref{prop:MixedCoexpFormula}, 
\[
\mylift\spsh\freesmc(A) \coexp \mylift\spsh\freesmc(B) 
\ \simeq \
\mylift\spsh\freesmc\big(\sfU\freesmc A^\op \times B\big)
\]
for all categories $A$ and $B$.
\end{proof}
Note that the above does not extend to the whole of $\FRIG$ because not every
such 2-rig has an underlying cocomplete category dualizable in $\COC$.  

\bigskip
The following is~\cite[Theorem 5.4.6]{GambinoN:opebaf} interpreted in terms of
2-rigs.

\begin{prop}
\label{prop:CC-OpRIG}
The opposite of the bicategory $\OpdRig$ is cartesian closed.	
\end{prop}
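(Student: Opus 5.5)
We must show that $\OpdRig^\op$ is cartesian closed. Products in $\OpdRig^\op$ are coproducts in $\OpdRig$; these exist by \cref{prop:coprod}~\ref{prop:opd-coprod}. Concretely, $\mylift\spsh\Env(P)+\mylift\spsh\Env(Q)\simeq\mylift\spsh\Env(P\times Q)$ for the appropriate product of operads, with the initial object $\Set\simeq\mylift\spsh\Env(\terminalcat)$ corresponding to the terminal operad. It therefore remains to show that, for operadic 2-rigs $R=\mylift\spsh\Env(P)$ and $T=\mylift\spsh\Env(Q)$ with $P,Q$ small operads, the coexponential $R\coexp T$ exists in $\RIG$ and is again operadic and small.

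Existence is immediate. The underlying cocomplete category of $R$ is $\spsh(\sfU\Env P)$, a presheaf category over a small category. By \cref{thm:dualizable-coc} it is dualizable, with dual $\spsh((\sfU\Env P)^\op)$. Hence $R$ is coexponentiable in $\RIG$ by \cref{thm:coexponentiability-in-crig}. Since coexponentials in the full sub-bicategory $\OpdRig$ are computed as in $\RIG$ as soon as the $\RIG$-coexponential lies in $\OpdRig$, the whole task reduces to identifying $R\coexp T$.

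For this, I would present $T$ by generators and relations. By \cref{prop:MixedCoexpFormula}, coexponentials into free 2-rigs are free: $R\coexp\mylift\spsh\freesmc B\simeq\mylift\spsh\freesmc(\sfU\Env P^\op\times B)$. The idea is to write $\Env Q$, and hence $T$, as a bicolimit in $\RIG$ of free 2-rigs built from the colours and operations of $Q$. Concretely, $\mylift\spsh\Env Q$ should be the coinserter/coequifier-type bicolimit expressing $Q$ as a pair of generators: colours $C_0$ and a category $B_1$ of operations, arranged as a codescent-type diagram $\mylift\spsh\freesmc(B_1)\rightrightarrows\mylift\spsh\freesmc(C_0)$ with the appropriate 2-cells. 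As in the proof of \cref{prop:coexponentiability-in-crig:sufficient}, $R\coexp(-)$ is a left biadjoint and so preserves bicolimits. Therefore $R\coexp T$ is the corresponding bicolimit of the free 2-rigs $\mylift\spsh\freesmc(\sfU\Env P^\op\times B)$.

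The final step is to recognise this bicolimit as $\mylift\spsh\Env(Q^{P})$ for an explicitly defined small operad $Q^{P}$, the exponential of operads of~\cite{GambinoN:opebaf}. Its colours are pairs of an object of $\sfU\Env P^\op$ and a colour of $Q$, and its operations are generated from those of $Q$ indexed over $\Env P$. This identification is where I expect the main obstacle. It requires checking that $\mylift\spsh\Env\co\OPD\to\RIG$ sends the operadic presentation to the bicolimit above, namely that $\Env$ is a left biadjoint and that operads are bicolimits of free ones in a way compatible with $\mylift\spsh$. I would first try to avoid constructing $Q^P$ by hand. Instead I would use the Yoneda-style characterisation of operadic 2-rigs in~\cite{AnelM:ope2r}: a 2-rig is operadic precisely when it is a presheaf 2-rig whose multiplicative structure is freely generated by a set of ``connected'' representables. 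I would then verify this property directly for the bicolimit using the explicit formula of \cref{lem:coexp-sym}. Smallness of the resulting operad follows since $P$ and $Q$ are small.
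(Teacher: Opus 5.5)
\textbf{There is a genuine gap: the proposal never establishes that $R\coexp T$ is operadic, and that is the whole content of the proposition.}

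Your reduction is sound. $\OpdRig$ is a full sub-bicategory of $\RIG$ closed under its coproducts, and $R=\mylift\spsh\Env(P)$ is coexponentiable in $\RIG$ because $\mylift\sfU R\simeq\spsh(\sfU\Env P)$ is dualizable. So it suffices to show that the $\RIG$-coexponential $R\coexp T$ is a small operadic 2-rig. (A side slip: since $\mylift\spsh\Env$ is a left biadjoint, coproducts of operadic 2-rigs come from \emph{coproducts} of operads, i.e.\ disjoint unions of colours, not products. Likewise $\Set$ is $\mylift\spsh\Env$ of the \emph{empty} operad; the terminal operad has as envelope the category of finite sets and all functions, whose presheaf 2-rig is not $\Set$.)

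Your proposal stops exactly at that identification, listing two strategies without carrying out either, and it is not a routine check. Knowing that $R\coexp T$ is a bicolimit of free 2-rigs tells you nothing by itself, because $\OpdRig$ is far from closed under bicolimits in $\RIG$. By \cref{lem:ColimitPresentationForCOC}, \cref{lem:sym-colim} and the fact that $\mylift\Sym$ preserves bicolimits, every 2-rig is obtained from free 2-rigs by iterated bicolimits. So everything hinges on the transition maps of the coexponentiated diagram, and these are not images of operad maps. Suppose $f\colon\mylift\spsh\freesmc(B_1)\to\mylift\spsh\freesmc(C_0)$ sends the generator of an operation $q\colon c_1,\dots,c_n\to c$ to $\yon c_1\otimes\cdots\otimes\yon c_n$. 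Computing via the unit $\mylift\spsh\freesmc(C_0)\to R+\big(R\coexp\mylift\spsh\freesmc(C_0)\big)$ shows that $R\coexp f$ sends the generator $(m,q)$ to
\[
\int^{m_1,\dots,m_n}\Env P(m,m_1\otimes\cdots\otimes m_n)\cdot\yon(m_1,c_1)\otimes\cdots\otimes\yon(m_n,c_n),
\]
not to a tensor of generators. This reflects the comultiplication on $(\mylift\sfU R)\dual\simeq\spsh(\sfU\Env P^\op)$ dual to Day convolution. Hence checking that $\mylift\spsh\Env$ carries operadic presentations to bicolimits does not let you read off an operad from this diagram. You would still have to split maps out of such coends into families of operations, check composition, equivariance and (di)naturality, and assemble an operad $Q^P$; that is precisely the missing construction. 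The appeal to a characterisation of operadic 2-rigs in~\cite{AnelM:ope2r} is unsubstantiated, and would need the same explicit knowledge of $R\coexp T$.

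The paper sidesteps all of this. It uses the biequivalence $\OpdRig\simeq\Bim(\FRig)$ of~\cite{AnelM:ope2r}, which identifies 2-rig maps between operadic 2-rigs with bimodules between the corresponding monads. It then applies the general theorem that the bimodule completion of a cartesian closed bicategory is cartesian closed~\cite{GambinoN:opebaf}, fed by \cref{cor:CC-FRIG}.

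If you want a direct argument instead, compute $R\coexp T$ through its universal property rather than through bicolimits, as in the last proposition of the paper:
\begin{itemize}
\item $\hom\RIG{T}{R+Z}$ is the category of $Q$-algebras in the underlying symmetric monoidal category of $R+Z$, which is $\hom\CAT{\sfU\Env P^\op}{\sfU\mylift\sfUin Z}$ with Day convolution.
\item Such an algebra amounts to objects $X_{(m,c)}$ of $Z$, contravariant in $m\in\Env P$, together with maps $X_{(m_1,c_1)}\otimes\cdots\otimes X_{(m_n,c_n)}\to X_{(m,c)}$. These are indexed by pairs of a map $\alpha\colon m\to m_1\otimes\cdots\otimes m_n$ in $\Env P$ and an operation $q\in Q(c_1,\dots,c_n;c)$.
\item That is exactly an algebra in $Z$ for an explicit small operad $Q^P$ with colours $\mathrm{ob}(\Env P)\times C_0$.
\end{itemize}
Pseudonaturality in $Z$ then yields $R\coexp T\simeq\mylift\spsh\Env(Q^P)$, with no need for a presentation of $T$.
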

\begin{proof}
Using the biequivalence $\OpdRig \simeq \Bim(\FRig)$ proved in
\cite[Proposition~3.2.3 and Theorem~4.3.2]{AnelM:ope2r}, this result can be
viewed as a consequence of the general statement that the bimodule completion
of a cartesian closed bicategory is cartesian
closed~\cite[Theorem~5.1.2]{GambinoN:opebaf}.  
\end{proof}

We do not know whether the opposite of the cocartesian bicategory $\CRig$ is
cartesian closed, though we conjecture that it is not.  However, we have the
following partial result showing that it is closed under coexponentiation by
cartesian convolution 2-rigs.

\begin{prop}
For a small cartesian monoidal category $M$ and a monoidal category $N$,
\[
\mylift\spsh(M) \coexp \mylift\spsh(N) 
\simeq\,
\mylift\spsh( \freesmc(\sfU M^\op)\otimes N )
\mathrlap.
\]
\end{prop}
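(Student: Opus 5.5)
The plan is to compute the coexponential representably, in the style of \cref{lem:coexp-sym}. We exhibit, for every 2-rig $T$, an equivalence
\[
\hom\RIG{\mylift\spsh(N)}{\mylift\spsh(M)+T}
\ \simeq\
\hom\RIG{\mylift\spsh\big(\freesmc(\sfU M^\op)\otimes N\big)}{T},
\]
pseudonatural in $T$. Throughout, $N$ is taken to be symmetric monoidal, as it must be for $\mylift\spsh(N)$ to be a 2-rig. By the universal property of convolution, the left-hand side is $\hom\SMCAT{N}{\mylift\sfUin(\mylift\spsh(M)+T)}$. The core of the argument is to identify the 2-rig $\mylift\spsh(M)+T$ concretely.

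\emph{Step 1 (underlying cocomplete category).} By \cref{prop:RIGcocartesian} the underlying cocomplete category is $\spsh(\sfU M)\otimes\mylift\sfU T$. By \cref{thm:dualizable-coc} and \cref{prop:DualInSMCBiCs}, $\spsh(\sfU M)$ has dual $\spsh(\sfU M^\op)$, so
\[
\spsh(\sfU M)\otimes \mylift\sfU T \simeq \inthom\COC{\spsh(\sfU M^\op)}{\mylift\sfU T}\simeq \CATexp{\sfU M^\op}{\mylift\sfU T}.
\]
Under this equivalence, a generator $\yon m\otimes t$ corresponds to the functor $m'\mapsto M(m',m)\tensor t$.

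\emph{Step 2 (monoidal structure).} We claim that the coproduct monoidal structure of \cref{prop:RIGcocartesian} transports to the \emph{pointwise} tensor product on $\CATexp{\sfU M^\op}{T}$, and that this is exactly where cartesianness of $M$ enters. On generators, the coproduct tensor sends $(\yon m\otimes t,\ \yon n\otimes s)$ to $\yon(m\times n)\otimes(t\otimes s)$. The corresponding functor is
\[
m'\mapsto M(m',m\times n)\tensor(t\otimes s)\cong \big(M(m',m)\tensor t\big)\otimes\big(M(m',n)\tensor s\big),
\]
using $M(m',m\times n)\cong M(m',m)\times M(m',n)$ and separate cocontinuity of $\otimes$ in $T$. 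The unit $\yon(1)\otimes I$ goes to $m'\mapsto M(m',1)\tensor I\cong I$. Both the coproduct tensor and the pointwise tensor are separately cocontinuous, and the generators are dense, so the two structures agree. The symmetry and coherence isomorphisms match for the same reason.

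Hence $\mylift\spsh(M)+T$ is the power $T^{\sfU M^\op}$ in $\RIG$, with pointwise structure. I expect this identification, done coherently (including the symmetric monoidal coherence data and not just the objects), to be the main obstacle. As a fallback, I would reduce to the case $T=\mylift\Sym(C)$ and then use \cref{lem:sym-colim} together with the fact that both sides preserve the relevant bicolimits in $T$.

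\emph{Step 3 (chain of equivalences).} With the pointwise description in hand, we compute
\begin{align*}
\hom\SMCAT{N}{T^{\sfU M^\op}}
&\simeq \hom\CAT{\sfU M^\op}{\inthom\SMCAT{N}{T}}
\simeq \hom\SMCAT{\freesmc(\sfU M^\op)}{\inthom\SMCAT{N}{T}}
\\
&\simeq \hom\SMCAT{\freesmc(\sfU M^\op)\otimes N}{T}
\simeq \hom\RIG{\mylift\spsh\big(\freesmc(\sfU M^\op)\otimes N\big)}{T}.
\end{align*}
The first step uses that the monoidal structure on $\inthom\SMCAT NT$ is pointwise, and exponential transposition in $\CAT$. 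The second uses the biadjunction \cref{eqn:CAT-SMCAT-adjunction}. The third uses the symmetric monoidal closed structure of $\SMCAT$ coming from pseudocommutativity. The last uses the universal property of $\mylift\spsh$. Each of these equivalences is pseudonatural in $T$, so the composite is too, which gives the stated formula.
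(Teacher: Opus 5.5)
Your proposal is correct and follows essentially the same route as the paper. Both compute $\hom\RIG{\mylift\spsh(N)}{\mylift\spsh(M)+T}$ representably, identify $\mylift\spsh(M)+T$ with the functor category on $\sfU M^\op$ carrying the pointwise structure (where cartesianness of $M$ enters), and conclude via the closed structure of $\SMCAT$ and the universal property of $\mylift\spsh$; the differences are cosmetic: the paper packages the pointwise structure directly as $\inthom\SMCAT{\freesmc(\sfU M^\op)}{\mylift\sfUin T}$ before applying closure while you transpose through $\CAT$ first, and your check on generators $\yon m\otimes t$ spells out the pointwise identification the paper only asserts.
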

\begin{proof}
Let $M$ be a small cartesian monoidal category and let $N$ be a monoidal
category.

Note first, the following equivalences
\begin{equation}\label{eqn:CRigcoexp}
\hom\RIG {\mylift\spsh(M) \coexp \mylift\spsh(N)} R
\,\simeq\,
\hom\RIG {\mylift\spsh(N)} {\mylift\spsh(M) + R}
\,\simeq\,
\Hom\SMCAT {N} {\mylift\sfUin(\mylift\spsh(M) + R)}
\mathrlap.
\end{equation}
Let us now analyze the symmetric monoidal category 
$\mylift\sfUin(\mylift\spsh(M) + R)$.  Up to equivalence, its underlying
category is 
\begin{equation}\label{eqn:CRigsum}
  \sfUin( \spsh(\sfU M)\otimes\mylift\sfU R )
  \,\simeq\,
  \hom\COC{\spsh(\sfU M^\op)}{\mylift\sfU R}
  \,\simeq\,
  \hom\CAT{\sfU M^\op}{\sfU\mylift\sfUin R}
\end{equation}
and, from the perspective of the latter description in terms of a functor
category, its symmetric monoidal structure arises from the cartesian monoidal
category $M$ by convolution; that is, it is given pointwise.  Thus, we have
the following equivalence of symmetric monoidal categories: 
\[
\mylift\sfUin(\mylift\spsh(M) + R)
\,\simeq\
\inthom\SMCAT{\freesmc(\sfU M^\op)}{\mylift\sfUin R}
\mathrlap.
\]
Therefore, from~\cref{eqn:CRigcoexp} and~\cref{eqn:CRigsum}, we have
\begin{align*}
\hom\RIG {\mylift\spsh(M) \coexp \mylift\spsh(N)} R
&\simeq\,
\Hom\SMCAT {N} { \inthom\SMCAT{\freesmc(\sfU M^\op)}{\mylift\sfUin R} }
\\
&\simeq\,
\Hom\SMCAT {\freesmc(\sfU M^\op)\otimes N} {\mylift\sfUin R} 
\\
&\simeq\,
\Hom\RIG {\mylift\spsh(\freesmc(\sfU M^\op)\otimes N)} {R} 
\mathrlap.
\qedhere
\end{align*}
\end{proof}

\end{document}